\documentclass[12pt]{amsart}
\usepackage[margin=1in]{geometry}                % See geometry.pdf to learn the layout options. There are lots.
\geometry{letterpaper}                   % ... or a4paper or a5paper or ... 
\usepackage{graphicx}
\usepackage{amssymb}
\usepackage{amsthm}
\usepackage{mathrsfs}
\usepackage{caption}
\usepackage{xcolor}

 \setlength{\parskip}{0.3em} %space between paragraphs

\newtheorem{theorem}{Theorem}[section]
\newtheorem{proposition}[theorem]{Proposition}
\newtheorem{lemma}[theorem]{Lemma}
\newtheorem{definition}[theorem]{Definition}
\newtheorem{corollary}[theorem]{Corollary}

\numberwithin{equation}{section}

\newcommand{\ZZ}{\mathbb{Z}}
\newcommand{\RR}{\mathbb{R}}
\newcommand{\NN}{\mathbb{N}}

\newcommand{\bbA}{\mathbb{A}}
\newcommand{\bbB}{\mathbb{B}}

\newcommand{\cali}{\mathcal{I}}
\newcommand{\calj}{\mathcal{J}}
\newcommand{\calk}{\mathcal{K}}

\newcommand{\Div}{{\mathrm{Div}}}

\newcommand{\exch}{{\sf Ex}}

\subjclass[2020]{Primary 05B45, 20K01. Secondary 11B75, 11C08, 52C22.}

\title[Splitting for integer tilings]{Splitting for integer tilings} 
\author{Izabella {\L}aba
and Itay Londner
 }

\date{\today} 

%\thanks{$^1$ Corresponding author. Department of Mathematics, University of British Columbia, Vancouver, B.C. V6T 1Z2, Canada, ilaba@math.ubc.ca}
%\thanks{$^2$ Department of Mathematics, Faculty of Mathematics and Computer Science, Weizmann Institute of Science, Rehovot 7610001, Israel, itay.londner@weizmann.ac.il}

\begin{document}

\begin{abstract}We consider translational integer tilings by finite sets $A\subset\ZZ$. We introduce a new method based on \emph{splitting}, together with a new combinatorial interpretation of some of the main tools of \cite{LaLo1}, \cite{LaLo2}. We also use splitting to prove the Coven-Meyerowitz conjecture for a new class of tilings 
$A\oplus B=\ZZ_M$. This includes tilings of period $M=p_1^{n_1}p_2^{n_2}p_3^{n_3}$ with $p_1>p_2^{n_2-1}p_3^{n_3-1}$, and tilings of period $M=p_1^{n_1}p_2^2p_3^2p_4^2$ with $p_1>p_2p_3p_4$, where $p_1,p_2,p_3,p_4$ are distinct primes and $n_1,n_2,n_3\in\NN$.

Keywords: integer tilings, cyclotomic polynomials, group factorization.
\end{abstract}

\maketitle

\setcounter{tocdepth}{1}

\tableofcontents

%%%%%%%%%%%%%%%%%%%%%%%%%%%%%%%%%%%%%%

\section{Introduction}

%%%%%%%%%%%%%%%%%%%%%%%%%%%%%%%%%%%%%%

Let $A\subset \ZZ$ be a finite and nonempty set. We say that 
$A$ tiles the integers by translations, and call it a {\it finite tile}, if there is a set $T\subset\ZZ$ such that each $n\in\ZZ$ has a unique representation
$n=a+t$ with $a\in A$ and $t\in T$. It is well known \cite{New} that any such tiling must be periodic, so that there exists a $M\in\NN$
such that $T=B\oplus M\ZZ$ for some finite set $B\subset \ZZ$. 
We then have $|A|\,|B|=M$, so that $A\oplus B=\ZZ_M$
is a factorization of the cyclic group $\ZZ_M$.

By translational invariance, we may assume that
$A,B\subset\{0,1,\dots\}$. The {\it mask polynomials} of $A$ and $B$ are
$$
A(X)=\sum_{a\in A}X^a,\ B(x)=\sum_{b\in B}X^b .
$$
In this language, $A\oplus B=\ZZ_M$ is equivalent to
\begin{equation}\label{poly-e1}
 A(X)B(X)=1+X+\dots+X^{M-1}\ \mod (X^M-1).
\end{equation}

Recall that the $s$-th cyclotomic polynomial $\Phi_s(X)$ is the minimal polynomial of $e^{2\pi i/s}$. The identity
%\begin{equation}\label{poly-e0}
$$
X^n-1=\prod_{s|n}\Phi_s(X),
$$
%\end{equation}
allows us to rephrase (\ref{poly-e1}) as
%\begin{equation}\label{poly-e2}
$$
  |A||B|=M\hbox{ and }\Phi_s(X)\ |\ A(X)B(X)\hbox{ for all }s|M,\ s\neq 1.
$$
%\end{equation}
Since $\Phi_s$ are irreducible, each $\Phi_s(X)$ with $s|M$ must divide at least one of $A(X)$ and $B(X)$.

The Coven-Meyerowitz theorem \cite{CM} is as follows. 

\begin{theorem}\label{CM-thm} \cite{CM}
Let $S_A$ be the set of prime powers
$p^\alpha$ such that $\Phi_{p^\alpha}(X)$ divides $A(X)$.  Consider the following conditions.

\smallskip
{\it (T1) $A(1)=\prod_{s\in S_A}\Phi_s(1)$,}

\smallskip
{\it (T2) if $s_1,\dots,s_k\in S_A$ are powers of different
primes, then $\Phi_{s_1\dots s_k}(X)$ divides $A(X)$.}
\medskip

Then:

\begin{itemize}

\item if $A$ satisfies (T1), (T2), then $A$ tiles $\ZZ$;

\item  if $A$ tiles $\ZZ$ then (T1) holds;

\item if $A$ tiles $\ZZ$ and $|A|$ has at most two distinct prime factors,
then (T2) holds.
\end{itemize}

\end{theorem}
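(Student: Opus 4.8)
The plan is to establish the three assertions in turn, using throughout that $\Phi_{p^\alpha}(X)=\Phi_p(X^{p^{\alpha-1}})=1+X^{p^{\alpha-1}}+\dots+X^{(p-1)p^{\alpha-1}}$, that $\Phi_{p^\alpha}(1)=p$ while $\Phi_s(1)=1$ when $s$ is not a prime power (so $\prod_{s\mid M,\,s>1}\Phi_s(1)=M$), and the elementary criterion: if $f(X)\equiv\sum_{j=0}^{p^\alpha-1}c_jX^j\pmod{X^{p^\alpha}-1}$, then $\Phi_{p^\alpha}(X)\mid f(X)$ if and only if for each $r\in\{0,\dots,p^{\alpha-1}-1\}$ the numbers $c_r,c_{r+p^{\alpha-1}},\dots,c_{r+(p-1)p^{\alpha-1}}$ are all equal (because $1,\zeta,\dots,\zeta^{p^{\alpha-1}-1}$ is a basis of $\QQ(\zeta)$ over $\QQ(\zeta^{p^{\alpha-1}})$ for $\zeta$ a primitive $p^\alpha$-th root of unity). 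Write $a_p(A)$ for the number of $\alpha\ge 1$ with $\Phi_{p^\alpha}(X)\mid A(X)$, so that $\prod_{s\in S_A}\Phi_s(1)=\prod_p p^{a_p(A)}$. I also use the lemma: \emph{if $f$ has nonnegative integer coefficients and $k$ distinct prime-power cyclotomics $\Phi_{p^{\alpha_1}},\dots,\Phi_{p^{\alpha_k}}$ for a single prime $p$ divide $f$, then $p^k\mid f(1)$}; this follows by induction on $k$, reducing $f$ modulo $X^{p^{\alpha_k}}-1$, using the criterion at $p^{\alpha_k}$ to factor $\bar f(X)=g(X)\Phi_{p^{\alpha_k}}(X)$ with $g$ having nonnegative coefficients and $g(1)=f(1)/p$, noting that the remaining $\Phi_{p^{\alpha_i}}$ still divide $g$ (they are irreducible and $\ne\Phi_{p^{\alpha_k}}$), and invoking the inductive hypothesis. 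In particular $\prod_{s\in S_A}\Phi_s(1)$ divides $A(1)=|A|$ for every finite $A$.

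\textbf{Sufficiency.} Given $A$ satisfying (T1) and (T2), I construct a complement explicitly. For each prime $p$ some of whose powers lie in $S_A$, let $\alpha^*(p)$ be the largest such exponent and set $M=\prod_p p^{\alpha^*(p)}$; by (T1), $|A|\mid M$. Let $\beta_{p,1}<\dots<\beta_{p,r_p}$ enumerate the exponents $\beta\le\alpha^*(p)$ with $p^\beta\notin S_A$, and put
\[
B_p=\bigoplus_{i=1}^{r_p}\{0,\ p^{\beta_{p,i}-1},\ 2p^{\beta_{p,i}-1},\ \dots,\ (p-1)p^{\beta_{p,i}-1}\}\subset\ZZ_{p^{\alpha^*(p)}},
\]
a genuine set of size $p^{r_p}$ by base-$p$ uniqueness of the representation. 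Identifying $\ZZ_M=\prod_p\ZZ_{p^{\alpha^*(p)}}$ by the Chinese Remainder Theorem, set $B=\bigoplus_p B_p$; then $|A||B|=M$ by (T1). One checks $\Phi_s(X)\mid A(X)B(X)$ for all $s\mid M$, $s>1$: if $s=p^\alpha$ with $p^\alpha\in S_A$ then $\Phi_{p^\alpha}\mid A$, and otherwise $\alpha$ is a ``missing'' exponent $\beta_{p,i}$ and $\Phi_{p^\alpha}\mid B_p\mid B$; if $s=\prod_{p\mid s}p^{e_p}$ has at least two prime factors and every $p^{e_p}$ lies in $S_A$, then (T2) gives $\Phi_s\mid A$, while if some $p^{e_p}=\beta_{p,i}$ is missing, the CRT identification turns $B_p$ into a union of arithmetic progressions with common difference $cp^{\beta_{p,i}-1}$ where $p\nmid c$ and $(M/p^{\alpha^*(p)})\mid c$, so $B(X)$ is divisible by $\Phi_p(X^{cp^{\beta_{p,i}-1}})=\prod_{d\mid c}\Phi_{p^{\beta_{p,i}}d}(X)$, and since $s/p^{e_p}$ is prime to $p$ and divides $M/p^{\alpha^*(p)}\mid c$, the factor $\Phi_s$ occurs. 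Hence $A\oplus B=\ZZ_M$, so $A$ tiles $\ZZ$.

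\textbf{Necessity.} Fix a periodic tiling $A\oplus B=\ZZ_M$, so $A(X)B(X)\equiv\frac{X^M-1}{X-1}\pmod{X^M-1}$. For each prime power $p^\alpha\mid M$ the factor $\Phi_{p^\alpha}$ divides the right-hand side, hence $A(X)B(X)$, hence at least one of $A(X),B(X)$; so $a_p(A)+a_p(B)\ge v_p(M)$ for all $p$, giving $\prod_{s\in S_A}\Phi_s(1)\cdot\prod_{s\in S_B}\Phi_s(1)=\prod_p p^{a_p(A)+a_p(B)}\ge M$. On the other hand the lemma shows this same product divides $|A||B|=M$, so every inequality is an equality; in particular $\prod_{s\in S_A}\Phi_s(1)=|A|=A(1)$, which is (T1). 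Now suppose additionally $|A|=p^uq^v$ (if $|A|$ is a prime power, (T2) is vacuous), so by (T1) $S_A$ consists of exactly $u$ powers of $p$ and $v$ powers of $q$, and (T2) is the statement that $\Phi_{p^\alpha q^\beta}(X)\mid A(X)$ for all $p^\alpha,q^\beta\in S_A$. I would first reduce, via fibering, to a period $M=p^aq^b$ involving only $p$ and $q$: any other prime $r\mid M$ has $a_r(A)=0$, so all $\Phi_{r^\gamma}$ divide $B(X)$, and this lets one strip the $r$-part of the period while keeping $A$ fixed. Then, arguing by induction on $a+b$, I would show each mixed cyclotomic $\Phi_{p^\alpha q^\beta}$ (with $p^\alpha,q^\beta\in S_A$) divides $A(X)$: assuming instead it divides $B(X)$, project the tiling modulo $X^{p^\alpha q^\beta}-1$ to a uniform-multiplicity covering of $\ZZ_{p^\alpha q^\beta}$ in which $\Phi_{p^\alpha}$ and $\Phi_{q^\beta}$ divide the image of $A$ but $\Phi_{p^\alpha q^\beta}$ divides the image of $B$, and derive a contradiction from the coefficient patterns forced by the single-prime criterion in the two prime directions.

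The step I expect to be the main obstacle is this last one: a mixed cyclotomic $\Phi_{p^\alpha q^\beta}$ imposes a vanishing condition that couples the $p$-adic and $q$-adic structure of $A$ in a way that the ``equal blocks'' criterion for a single prime power does not see, and forcing two such structures to coexist is exactly what pins everything down when only two primes are present. It is also precisely where the argument breaks down for three or more prime factors — which is the gap that the splitting method of this paper is designed to attack.
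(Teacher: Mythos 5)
The paper does not prove this theorem; it is quoted from Coven--Meyerowitz \cite{CM}, so there is no internal proof to compare against. Your treatment of the first two bullets is essentially the standard argument of \cite{CM} and is correct: the lemma that $k$ distinct $p$-power cyclotomic divisors force $p^k\mid f(1)$ (via the equal-blocks criterion for $\Phi_{p^\alpha}$), the explicit complement assembled from the ``missing'' prime-power exponents, and the counting argument combining $\prod_p p^{a_p(A)+a_p(B)}\geq M$ with $\prod_{s\in S_A}\Phi_s(1)\cdot\prod_{s\in S_B}\Phi_s(1)\mid |A|\,|B|=M$ all go through (the latter also rules out, as it must, elements of $S_A$ that do not divide the period).

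The third bullet, however, is not proved. There are two gaps. First, the reduction to a period $M=p^aq^b$ does not follow merely from ``all $\Phi_{r^\gamma}$ divide $B$'': it requires Tijdeman's dilation theorem with a dilation by a prime $r$ dividing $M$ but not $|A|$ (so that $rA\oplus B=\ZZ_M$ with $rA$ contained in the subgroup $r\ZZ_M$), followed by the subgroup reduction --- exactly the use of the full strength of Theorem \ref{thm-tijdeman} that this paper explicitly attributes to \cite{CM}. Second, and more seriously, the final step --- ``derive a contradiction from the coefficient patterns'' after projecting modulo $X^{p^\alpha q^\beta}-1$ --- is a placeholder, not an argument. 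The projected configuration is only a uniform-multiplicity covering, and the single-prime equal-blocks criterion applied in the two directions does not by itself exclude $\Phi_{p^\alpha}\Phi_{q^\beta}\mid A$ together with $\Phi_{p^\alpha q^\beta}\mid B$; the actual proof in \cite{CM} is an induction on the size of the tile powered by the structure theorem for factorizations of cyclic groups with two prime factors (after translation, one of the two factors lies in a proper subgroup $p\ZZ_M$ or $q\ZZ_M$, a result in the de Bruijn--Sands line), which is what permits the descent to a smaller group while tracking cyclotomic divisors. You correctly flag this step as the main obstacle, but flagging it is not supplying it, so the third assertion remains unproved in your write-up.
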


We do not know whether all finite tiles satisfy (T2). The statement that this is true 
has become known in the literature as the {\it Coven-Meyerowitz conjecture} and is generally considered to be the main open question in the theory of integer tilings. The progress to date includes our work in \cite{LaLo1}, \cite{LaLo2}, leading to Theorem \ref{T2-3primes-thm} in the odd case and partial results for general tilings, as well as mild extensions of the Coven-Meyerowitz theorem in  \cite{M}, \cite{shi}, \cite{Tao-blog}, and proofs of (T2) for tilings of certain special types \cite{KL}, \cite{dutkay-kraus}. There is also significant interest in other tiling questions, for example translational tilings of $\ZZ^n$ and $\RR^n$ \cite{Bh}, \cite{GK}, \cite{GT}, \cite{GT2}, and tilings of the real line by a function \cite{KoLev}).

The main result of this paper is the following theorem. For $N\in \NN$, we will use the notation 
$$D(N)=\frac{N}{\prod_{p|N,p\ {\rm prime}} p}.$$

\begin{theorem}\label{T2-largeprime} 
Let $M=p_1^{n_1}M_1$, where $p_1$ is prime, $n_1\in\NN$, and $p_1\nmid M_1$.
Assume that for any $M'|M_1$, (T2) holds for both $A'$ and $B'$ in any tiling $A'\oplus B'=\ZZ_{M'}$, and
that
\begin{equation}\label{e-largeprime}
p_1>D(M_1).
\end{equation}
Then in any tiling $A\oplus B=\ZZ_M$, both $A$ and $B$ satisfy (T2).
\end{theorem}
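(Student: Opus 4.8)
The plan is to prove the statement by induction on $n_1$; since $D(M')\mid D(M_1)$ for every $M'\mid M$ with $p_1\nmid M'$, the hypothesis and the inductive step let us assume that (T2) holds for \emph{both} sets in every tiling of period a proper divisor of $M$. Fix a tiling $A\oplus B=\ZZ_M$. By Theorem~\ref{CM-thm} the condition (T1) holds for $A$ and for $B$, and a counting argument then forces the prime-power divisors of $M$ to be partitioned between $S_A$ and $S_B$: for each prime $p$ with $p^{\beta}\Vert M$ the powers $p,\dots,p^{\beta}$ are split, with none lying in both sets. Write $S_A=S_A^{1}\sqcup S_A^{M_1}$, separating powers of $p_1$ from prime powers dividing $M_1$, and similarly for $B$. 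Since at most one of the prime powers appearing in a (T2)-constraint can be a power of $p_1$, establishing (T2) for $A$ reduces to two families of divisibilities: \emph{(i)} $\Phi_d(X)\mid A(X)$ whenever $d\mid M_1$ is a product of prime powers of $S_A^{M_1}$ lying over distinct primes, and \emph{(ii)} $\Phi_{p_1^{\alpha}d}(X)\mid A(X)$ whenever in addition $p_1^{\alpha}\in S_A^{1}$; divisibility by $\Phi_{p_1^{\alpha}}$ alone is automatic, and the same dichotomy applies to $B$.

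The core of the argument is a splitting lemma that produces family \emph{(i)}. Recording $A$ and $B$ through their intersections with the cosets of the $p_1$-primary subgroup of $\ZZ_M$, I would show that the inequality $p_1>D(M_1)$ makes the ``$\ZZ_{M_1}$-shadow'' of the tiling rigid: the number of distinct $p_1$-fiber classes that either set can occupy in a given row is too small, so that after peeling off $p_1$-fibers one is left, in each row, with a configuration supported on a proper cyclic subgroup. This is exactly the splitting of the title, and it exhibits $A$ (respectively $B$) as a disjoint union of translates, in the $p_1$-direction, of sets occurring in honest tilings $A'\oplus B'=\ZZ_{M'}$ with $M'\mid M_1$; moreover, since the projection to $\ZZ_{M_1}$ carries the divisibility relation $\Phi_d\mid A$ ($d\mid M_1$) faithfully, these auxiliary tilings detect every such $d$ with $\Phi_d\mid A$. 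Here the combinatorial reinterpretation of the tools of \cite{LaLo1,LaLo2} ---fibers, cuboids, and saturating sets--- is the engine. Feeding the auxiliary tilings into the inductive hypothesis, i.e. (T2) for periods dividing $M_1$, then yields family \emph{(i)} for both $A$ and $B$. I expect this lemma to be the main obstacle: extracting genuine period-$M'$ tilings from the period-$M$ tiling in a way that transports the relevant cyclotomic data, and verifying that the pieces obtained by removing $p_1$-fibers really are rigid enough for the induction to close, is where $p_1>D(M_1)$ must be used, and tracking that inequality through the fiber count is delicate.

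It remains to treat the mixed divisibilities \emph{(ii)}. Once $\Phi_{p_1^{\alpha}}\mid A$ and $\Phi_d\mid A$ with $d\mid M_1$ are in hand, the task is to promote these to $\Phi_{p_1^{\alpha}d}\mid A$. Here I would argue by a fibering analysis in the $p_1$-direction: the $p_1$-primary part of the structure is a single prime power and hence ``standard'', so compatibility of the tiling across the cosets of the relevant $p_1$-subgroup, combined with the $\ZZ_{M_1}$-structure already pinned down in the previous step, forces $\Phi_{p_1^{\alpha}d}\mid A$ as the only consistent possibility; once again the bound $p_1>D(M_1)$ enters by limiting the fibers that could otherwise obstruct this conclusion. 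I expect \emph{(ii)} to be a more routine, though still nontrivial, variant of the same fibering argument, and combining \emph{(i)} and \emph{(ii)} for $A$ and for $B$ completes the proof of the theorem.
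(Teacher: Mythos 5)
Your outline has the right general flavour (fiber-by-fiber rigidity in the $p_1$ direction, reduction to smaller periods), but two of its load-bearing steps are left unproved and are not routine. First, the ``core splitting lemma'' is exactly where the content lies, and the way $p_1>D(M_1)$ is supposed to enter is never pinned down. In the paper this works as follows: one first disposes, via the subgroup reduction of \cite[Lemma 2.5]{CM}, of the case where some prime fails to divide both $|A|$ and $|B|$; in the remaining case one gets $(|A|,M_1),(|B|,M_1)\le D(M_1)<p_1$, and then divisor exclusion together with the plane bound $|B\cap\Pi(z,p_1^{n_1})|\le(|B|,M_1)$ forces \emph{uniform} splitting parity in the $p_1$ direction (Proposition \ref{Blowbound}). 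Without that preliminary reduction, the inequality \eqref{e-largeprime} does not directly bound the quantities your pigeonhole argument needs. Moreover, your proposed output of the lemma --- writing $A$ as a union of translates of tiles occurring in genuine tilings of period $M'\mid M_1$ --- is stronger than what the paper extracts and is not obviously attainable: the projection of $A$ to $\ZZ_{M_1}$ is in general a multiset, not a tile, and the paper never removes all of $p_1^{n_1}$ at once. It removes one factor of $p_1$ at a time via the slab reduction (Theorem \ref{subtile}, Corollary \ref{slab-reduction}), passing from $A\oplus B=\ZZ_M$ to $A'_{p_1}\oplus B=\ZZ_{M/p_1}$ with $B$ unchanged.

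Second, your family \emph{(ii)} --- promoting $\Phi_{p_1^{\alpha}}\mid A$ and $\Phi_d\mid A$ to $\Phi_{p_1^{\alpha}d}\mid A$ --- cannot be dismissed as a routine fibering variant: deducing divisibility by $\Phi_{s_1\cdots s_k}$ from divisibility by the individual $\Phi_{s_i}$ is essentially the whole difficulty of (T2), and no known argument does this directly in general. The paper avoids the issue entirely: the slab reduction machinery of \cite[Corollary 6.7]{LaLo1} transports (T2) from the reduced tiling back to the original one, so the mixed cyclotomic divisors never have to be handled separately. Unless you either prove your step \emph{(ii)} from scratch or invoke the slab reduction (in which case the decomposition into families \emph{(i)} and \emph{(ii)} becomes unnecessary), the argument does not close. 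A smaller but real defect is the induction scheme: inducting on $n_1$ alone, combined with the hypothesis on divisors of $M_1$, does not give (T2) for \emph{all} proper divisors of $M$ (e.g.\ for $p_1^{n_1}M_1/p_2$); one must induct over all divisors of $M$ and check the theorem's hypotheses for each, as the paper does.
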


Theorem \ref{T2-largeprime} is a natural application of {\it splitting}, a new method developed in this article and described briefly below. The proof is much shorter and simpler than those in \cite{LaLo1}, \cite{LaLo2}. Splitting is also an important part of the proof of Theorem \ref{T2-3primes-thm}.
In \cite{LaLo2}, we proved Theorem \ref{T2-3primes-thm} under the additional assumption that $M$ is odd. 
In the companion paper \cite{LL-even} we use a combination of splitting and the methods of \cite{LaLo1}, \cite{LaLo2}
to extend the same result to the even case, thus completing the proof of the theorem.

\begin{theorem}\label{T2-3primes-thm}\cite{LaLo2}, \cite{LL-even}
Let $M=p_1^2p_2^2p_3^2$, where $p_1,p_2,p_3$ are distinct primes. 
Assume that $A\oplus B=\ZZ_M$, with $|A|=|B|=p_1p_2p_3$. Then both $A$ and $B$ satisfy (T2).
\end{theorem}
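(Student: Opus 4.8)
The plan is to combine the (already known) constraints coming from (T1) with a reduction, via \emph{splitting}, to tilings of smaller period, where (T2) is either unconditionally available — via Theorem \ref{CM-thm}, since two prime factors then suffice — or is forced by the structural machinery of \cite{LaLo1}, \cite{LaLo2}.

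\emph{Step 1: exploiting (T1).} Since $\Phi_p(1)=\Phi_{p^2}(1)=p$ for every prime $p$, and $A(1)=B(1)=p_1p_2p_3$, condition (T1) — which holds because $A$ and $B$ tile — forces $S_A$ to contain exactly one of $\{p_i,p_i^2\}$ for each $i$, and likewise $S_B$. Moreover, $1+X+\dots+X^{M-1}$ is squarefree and divisible by $\Phi_d$ for every $d\mid M$ with $d\neq1$, so each $\Phi_d$ divides exactly one of $A(X)$, $B(X)$; hence for each $i$ precisely one of $\Phi_{p_i},\Phi_{p_i^2}$ divides $A$ and the other divides $B$. Up to swapping $A\leftrightarrow B$ and permuting the primes, there are two cases: the \emph{homogeneous} one, $S_A=\{p_1,p_2,p_3\}$ and $S_B=\{p_1^2,p_2^2,p_3^2\}$, and the \emph{mixed} one, where the two prime powers of (say) $p_1$ are interchanged. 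In either case, proving (T2) for $A$ and $B$ reduces to showing that a short, explicit list of ``mixed'' cyclotomic polynomials — those indexed by products of pairwise coprime prime powers drawn from $S_A$, respectively $S_B$ — divides the correct factor.

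\emph{Step 2: the grid picture and splitting.} Next I would identify $\ZZ_M$ with $\ZZ_{p_1^2}\times\ZZ_{p_2^2}\times\ZZ_{p_3^2}$ and translate divisibility of $A(X)$ by $\Phi_d$ into the vanishing of weighted sums of $A$ over cosets of the subgroups $\frac{M}{p_i}\ZZ_M$ — the ``fibers'' of \cite{LaLo1}, \cite{LaLo2}. The splitting step then slices the tiling along one prime direction, say $p_3$: one writes $A(X)=\sum_j A_j(X)$ with $A_j$ supported on the $j$-th coset of $\frac{M}{p_3}\ZZ_M$, and tracks how \eqref{poly-e1} distributes among the pieces. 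When $A$ or $B$ contains a full fiber in that direction, the tiling descends to a tiling of a group with a strictly lower prime-power content (and, after iterating, with fewer prime factors), where (T2) is available by Theorem \ref{CM-thm} or inductively in the manner of Theorem \ref{T2-largeprime}; one then lifts the conclusion back up. So the fibered subcases are, in effect, disposed of by splitting together with the two-prime Coven--Meyerowitz theorem.

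\emph{Step 3: the obstruction, and where the real work lies.} The main difficulty is the case in which no fiber is available in any direction, so splitting does not lower the complexity outright — especially the mixed configurations, where $\Phi_{p_i^2}\mid A$ removes the fiber structure one would otherwise get for free in the $p_i$ direction, and the divisibility of the delicate ``boundary'' polynomials such as $\Phi_{p_i^2p_j}$ and the top one $\Phi_{p_1^2p_2^2p_3^2}$ must be pinned down directly. Here I would invoke the structure theorems of \cite{LaLo1}: if a (T2)-predicted divisibility failed, $A$ would be forced into a rigid configuration — a prescribed distribution of ``columns'' in the offending prime direction — and a cuboid/counting argument tracking the values of $A(X)$ at the relevant roots of unity would contradict the tiling identity, or else force the missing divisibility after all. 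In \cite{LaLo2} this was carried out for odd $M$; the parity-sensitive portions of those cuboid arguments fail when $2\mid M$, and the point of the companion paper \cite{LL-even} is that splitting supplies exactly the missing reduction, peeling off the prime $2$ (or a power of it) to return to a situation the earlier arguments cover. I expect essentially all of the genuinely new effort to be concentrated in this last step: matching the finitely many obstruction configurations against the splitting decomposition and verifying, case by case, that each is either impossible or already (T2)-compliant.
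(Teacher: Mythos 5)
First, a point of context: this paper does not prove Theorem \ref{T2-3primes-thm} at all --- it is imported wholesale from \cite{LaLo2} (which handles odd $M$) and the companion paper \cite{LL-even} (which supplies the even case), so there is no internal proof to compare against. Your Step 1 is correct and matches the standard opening move: (T1) together with $|A|=|B|=p_1p_2p_3$ forces $S_A$ and $S_B$ each to contain exactly one power of each $p_i$, and since the six prime powers dividing $M$ must all lie in $S_A\cup S_B$, the two sets partition them. (Your appeal to squarefreeness of $1+X+\dots+X^{M-1}$ is not quite the right justification --- for composite $d$ both $A$ and $B$ may be divisible by $\Phi_d$, since the identity holds only mod $X^M-1$ --- but the counting argument you also give settles the prime-power case, which is all you need.)

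The genuine gap is that Steps 2 and 3 describe a strategy rather than execute one, and essentially the entire content of the theorem sits inside them. Two concrete problems. (a) In Step 2 you assert that when $A$ or $B$ contains a full fiber in some direction, the tiling descends to $\ZZ_{M/p_i}$. That is not automatic: the descent requires the slab reduction conditions of Theorem \ref{subtile}, equivalently uniform splitting parity in that direction for \emph{every} dilated tiling $A\oplus rB$ with $r\in R$ (Lemma \ref{splittingslab}), and establishing that uniformity is itself the hard part --- it is precisely what Section \ref{consistency} here and much of \cite{LL-even} are devoted to. The presence of some fibers in $A$ does not yield it. (b) Step 3, ``invoke the structure theorems of \cite{LaLo1} and check the finitely many obstruction configurations case by case,'' is where essentially all of \cite{LaLo2} lives: the classification of unfibered structures compatible with $\Phi_M\mid A$, the saturating-set and cuboid estimates that eliminate each of them, and the parity-sensitive repairs needed when $2\mid M$. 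None of that is reproduced, or even specified precisely enough to be checked, so the proposal cannot stand as a proof; it is an accurate roadmap to where the proof is, which is a different thing.
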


In order to apply Theorem \ref{T2-largeprime}, we need to have a number $M'$ such that (T2) holds for both tiles in any $M'$-periodic tiling. Such choices of $M'$ are provided by Theorems \ref{CM-thm}, \ref{T2-3primes-thm}, and \cite[Corollary 6.2]{LaLo1}.
Corollary \ref{manyprimes} below states the most general assumptions on $M$ under which our work here, combined with \cite{LaLo1}, \cite{LaLo2}, \cite{LL-even}, yields (T2) for both tiles in any tiling $A\oplus B=\ZZ_M$.

\begin{corollary}\label{manyprimes}
Let 
$$M^*=p_1^{\alpha_1}p_2^{\alpha_2}p_3^{\alpha_3}p_4p_5\dots p_K q_1^{\beta_1}\dots q_L^{\beta_L},
$$
where $p_1,\dots,p_K,q_1,\dots,q_L$ are distinct primes, $\beta_1,\dots,\beta_L\in\NN$, and either
\begin{equation}\label{alpha-e1}
\alpha_3=1,\ \alpha_1,\alpha_2\in\NN,
\end{equation}
or
%\begin{equation}\label{alpha-e2}
$$
 \alpha_1=\alpha_2=\alpha_3=2.
$$
%\end{equation}
Assume further that 
\begin{equation}\label{growth-e1}
q_j>D(N_{j})\hbox{ for }j=1,\dots,L,
\end{equation}
where $N_1:=p_1^{\alpha_1}p_2^{\alpha_2}p_3^{\alpha_3}$ and 
$$
N_j:=N_{j-1} q_j^{\beta_j}
\hbox{ for }j=2,\dots,L.
$$
Then for any $M|M^*$, both tiles $A$ and $B$ in any tiling $A\oplus B=\ZZ_{M}$ satisfy (T2).
\end{corollary}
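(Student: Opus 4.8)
The plan is a short induction that peels off the prime powers $q_1^{\beta_1},\dots,q_L^{\beta_L}$ one at a time by repeatedly invoking Theorem~\ref{T2-largeprime}, the base of the induction being supplied by the results quoted above. For $0\le j\le L$ set
\[
R_j:=N_1\cdot p_4p_5\cdots p_K\cdot q_1^{\beta_1}q_2^{\beta_2}\cdots q_j^{\beta_j},
\]
so that $R_0=N_1\,p_4\cdots p_K$ and $R_L=M^*$. I would prove, by induction on $j$, the statement $P(j)$: \emph{for every $M\mid R_j$, both tiles $A$ and $B$ in any tiling $A\oplus B=\ZZ_M$ satisfy} (T2). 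Since every divisor of $M^*$ divides $R_L$, the corollary is exactly $P(L)$.

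For the base case $P(0)$ one has $M\mid N_1\,p_4\cdots p_K$, where $N_1$ equals $p_1^{\alpha_1}p_2^{\alpha_2}p_3$ when $\alpha_3=1$ and $p_1^2p_2^2p_3^2$ when $\alpha_1=\alpha_2=\alpha_3=2$. Given a tiling $A\oplus B=\ZZ_M$, any tile whose cardinality has at most two distinct prime factors satisfies (T2) by Theorem~\ref{CM-thm} (it tiles $\ZZ$, since $A\oplus(B\oplus M\ZZ)=\ZZ$). The remaining configurations, in which some tile's cardinality has three or more distinct prime divisors, are covered by \cite[Corollary 6.2]{LaLo1} when $\alpha_3=1$ and by Theorem~\ref{T2-3primes-thm} when $\alpha_1=\alpha_2=\alpha_3=2$; the extra primes $p_4,\dots,p_K$, all occurring to the first power, do not obstruct the applicability of these results.

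For the inductive step, assume $P(j-1)$ and take $A\oplus B=\ZZ_M$ with $M\mid R_j=R_{j-1}\,q_j^{\beta_j}$. Write $M=q_j^n M_1$ with $q_j\nmid M_1$; then $0\le n\le\beta_j$ and $M_1\mid R_{j-1}$. If $n=0$, then $M\mid R_{j-1}$ and $P(j-1)$ applies, so assume $n\ge1$ and invoke Theorem~\ref{T2-largeprime} with its $p_1,n_1,M_1$ taken to be $q_j,n,M_1$. Its first hypothesis---that (T2) holds for both tiles in every tiling whose period divides $M_1$---is exactly $P(j-1)$, since $M_1\mid R_{j-1}$. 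Its second hypothesis is \eqref{e-largeprime}, i.e.\ $q_j>D(M_1)$: here one uses that $D(\cdot)$ is nondecreasing under divisibility---if $M'\mid N'$ then $D(M')\le D(N')$, as is visible from $D(N')=\prod_{p\mid N'}p^{\,v_p(N')-1}$---and that primes dividing $R_{j-1}$ only to the first power, in particular $p_4,\dots,p_K$, contribute a factor $1$ to $D$; consequently $D(M_1)\le D(R_{j-1})$, and the growth hypothesis \eqref{growth-e1} is arranged precisely so that $q_j$ exceeds $D(R_{j-1})$. Hence Theorem~\ref{T2-largeprime} yields (T2) for $A$ and $B$, and $P(j)$ follows.

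The one genuinely delicate point is the base case $P(0)$: one must check that the quoted results really do cover \emph{every} factorization $A\oplus B=\ZZ_M$ with $M\mid N_1\,p_4\cdots p_K$---including tilings in which a tile's cardinality has three or more distinct prime factors and those in which the squarefree block $p_4\cdots p_K$ is nontrivial---rather than only the ``balanced'' representative factorizations in which Theorem~\ref{T2-3primes-thm} and \cite[Corollary 6.2]{LaLo1} are phrased; this may require a reduction absorbing prime factors of $M$ that occur to the first power. Once the base case is in hand, the induction is essentially bookkeeping: \eqref{growth-e1} is designed exactly so that \eqref{e-largeprime} holds automatically at each stage, the only further ingredient being the elementary monotonicity of $D$.
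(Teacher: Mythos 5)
Your inductive step is correct and is essentially the paper's argument: Theorem~\ref{T2-largeprime} is applied with its $p_1$ taken to be $q_j$, the hypothesis on tilings of period dividing $M_1$ is supplied by the inductive statement, and \eqref{growth-e1} together with the monotonicity of $D$ under divisibility (and the fact that first-power primes contribute a factor $1$ to $D$) gives $q_j>D(M_1)$. (You are also reading the recursion for $N_j$ in the only way that makes the induction close, namely $N_j=N_1q_1^{\beta_1}\cdots q_{j-1}^{\beta_{j-1}}$; as printed, $N_j:=N_{j-1}q_j^{\beta_j}$ would make \eqref{growth-e1} unsatisfiable whenever $\beta_j\geq 2$, so this is a typo in the statement rather than an error on your part.)

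The gap is exactly where you suspect it: the base case. The assertion that the primes $p_4,\dots,p_K$ ``do not obstruct the applicability'' of Theorem~\ref{T2-3primes-thm} and \cite[Corollary 6.2]{LaLo1} is not something those results give for free --- Theorem~\ref{T2-3primes-thm} is stated only for $M=p_1^2p_2^2p_3^2$ with $|A|=|B|=p_1p_2p_3$, and says nothing about tilings of $\ZZ_{p_1^2p_2^2p_3^2p_4\cdots p_K}$, nor about unbalanced factorizations such as $|A|=p_1^2p_2p_3$, $|B|=p_2p_3$. The missing ingredient is the subgroup reduction, which the paper packages as Lemma~\ref{almostsquarefree}: if a prime $p$ with $p^{n}\parallel M$ divides at most one of $|A|$ and $|B|$ (in particular if $n=1$), say $p\nmid|A|$, then by Tijdeman's theorem $\tilde A\oplus B=\ZZ_M$ is again a tiling with $\tilde A(X)=A(X^{p})$, so $\tilde A\subset p\ZZ_M$, and \cite[Theorem 6.1]{LaLo1} reduces (T2) for $A,B$ to (T2) for a tiling of period $M/p$. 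Iterating this removes the entire squarefree block $p_4\cdots p_K$ (and $p_3$ when $\alpha_3=1$), and the same lemma disposes of the unbalanced factorizations of $\ZZ_{p_1^2p_2^2p_3^2}$, leaving only the balanced case covered by Theorem~\ref{T2-3primes-thm} and the two-prime cases covered by Theorem~\ref{CM-thm}. Until this reduction is stated and justified, the base case --- and hence the corollary --- is not established. (A minor structural difference: the paper performs this squarefree reduction \emph{before} the $q$-induction rather than carrying $p_4\cdots p_K$ along to the base case; either order works, since these primes do not affect $D$.)
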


The two special cases below use 
Theorem \ref{T2-largeprime} together with Theorems \ref{CM-thm} and \ref{T2-3primes-thm}.

\begin{corollary}\label{largeprime-cor} 
Assume that $A\oplus B=\ZZ_M$, and that one of the following holds:
\begin{itemize}
\item $M=p_1^{n_1}p_2^{n_2}p_3^{n_3}$ with $p_1>p_2^{n_2-1}p_3^{n_3-1}$,
\item $M=p_1^{n_1}p_2^2p_3^2p_4^2$ with $p_1>p_2p_3p_4$,
\end{itemize}
where $p_1,p_2,p_3,p_4$ are distinct primes and $n_1,n_2,n_3\in\NN$.
Then both $A$ and $B$ satisfy (T2).
\end{corollary}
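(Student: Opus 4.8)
The plan is to obtain both bullets as direct applications of Theorem~\ref{T2-largeprime} with the distinguished large prime taken to be $p_1$. Write $M=p_1^{n_1}M_1$, where $M_1=p_2^{n_2}p_3^{n_3}$ in the first case and $M_1=p_2^2p_3^2p_4^2$ in the second; since $p_1$ is distinct from the other primes, $p_1\nmid M_1$. It then remains to verify the two hypotheses of Theorem~\ref{T2-largeprime}: the growth condition \eqref{e-largeprime}, and the statement that (T2) holds for both tiles in every tiling $A'\oplus B'=\ZZ_{M'}$ with $M'\mid M_1$.

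Checking \eqref{e-largeprime} is a one-line computation. Since $M_1=p_2^{n_2}p_3^{n_3}$ has exactly the prime factors $p_2,p_3$, we have $D(M_1)=M_1/(p_2p_3)=p_2^{n_2-1}p_3^{n_3-1}$, so the hypothesis $p_1>p_2^{n_2-1}p_3^{n_3-1}$ of the first bullet is precisely \eqref{e-largeprime}. Since $M_1=p_2^2p_3^2p_4^2$ has exactly the prime factors $p_2,p_3,p_4$, we have $D(M_1)=M_1/(p_2p_3p_4)=p_2p_3p_4$, so the hypothesis $p_1>p_2p_3p_4$ of the second bullet is precisely \eqref{e-largeprime}.

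The inductive hypothesis is immediate in the first case: every divisor $M'$ of $M_1=p_2^{n_2}p_3^{n_3}$ has at most two distinct prime factors, hence in any tiling $A'\oplus B'=\ZZ_{M'}$ the integers $|A'|,|B'|$, being divisors of $M'$, also have at most two distinct prime factors, and the third bullet of Theorem~\ref{CM-thm} yields (T2) for both $A'$ and $B'$. Theorem~\ref{T2-largeprime} then finishes the first case.

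In the second case, a divisor $M'$ of $M_1=p_2^2p_3^2p_4^2$ may have three distinct prime factors, and this is where the real work lies. Given $A'\oplus B'=\ZZ_{M'}$, I would distinguish three situations. If $M'$ has at most two distinct prime factors, the third bullet of Theorem~\ref{CM-thm} again gives (T2) for both tiles. If $M'$ has three distinct prime factors but at least one of $|A'|,|B'|$ has at most two, then (T2) for both tiles is provided by the known three-prime results, in particular \cite[Corollary~6.2]{LaLo1}. If, finally, both $|A'|$ and $|B'|$ have three distinct prime factors, then each is divisible by $p_2p_3p_4$, so $(p_2p_3p_4)^2\mid|A'|\,|B'|=M'\mid(p_2p_3p_4)^2$; hence $M'=p_2^2p_3^2p_4^2$ and $|A'|=|B'|=p_2p_3p_4$, which is exactly the hypothesis of Theorem~\ref{T2-3primes-thm}, giving (T2) for both. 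Applying Theorem~\ref{T2-largeprime} completes the second case. The main obstacle — really the only point demanding care — is the middle situation: one must confirm that the prior results on tilings with three prime factors (\cite[Corollary~6.2]{LaLo1}, supplemented by Theorem~\ref{T2-3primes-thm} and the standard fiber/reduction arguments when $M'=p_2^2p_3^2p_4^2$ with $|A'|\neq|B'|$) do cover (T2) for \emph{both} tiles for every divisor $M'$ of $(p_2p_3p_4)^2$ and every admissible cardinality split $|A'|\,|B'|=M'$, not merely the balanced one.
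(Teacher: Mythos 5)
Your proposal is correct and follows essentially the same route as the paper: the corollary is obtained by applying Theorem~\ref{T2-largeprime} with the large prime $p_1$ (after checking $D(M_1)=p_2^{n_2-1}p_3^{n_3-1}$, resp.\ $p_2p_3p_4$), with the required hypothesis on divisors $M'\mid M_1$ supplied by Theorem~\ref{CM-thm} in the two-prime cases, Theorem~\ref{T2-3primes-thm} in the balanced case $|A'|=|B'|=p_2p_3p_4$, and the subgroup reduction of \cite[Corollary 6.2]{LaLo1} (Lemma~\ref{almostsquarefree} in the paper) iterated in the remaining unbalanced three-prime cases. The ``middle situation'' you flag is exactly what the paper handles by repeatedly passing from period $M'$ to $M'/p_i$ for a prime $p_i$ dividing at most one of $|A'|,|B'|$ until Theorem~\ref{CM-thm} applies, so no gap remains.
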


We note the application to Fuglede's conjecture \cite{Fug} which (in its original formulation) states that a set $\Omega\subset \RR^n$ of positive $n$-dimensional Lebesgue measure tiles $\RR^n$ by translations if and only if the space $L^2(\Omega)$ admits an orthogonal basis of exponential functions. A set with the latter property is called {\em spectral}. 
The conjecture is known to be false, in its full generality, in dimensions 3 and higher \cite{tao-fuglede}, \cite{KM}, \cite{KM2}, \cite{FMM}, \cite{matolcsi}, \cite{FR}. However, there are important special cases in which the conjecture was confirmed \cite{IKT}, \cite{GL}, \cite{LM}, and the finite abelian group analogue of the conjecture is currently a very active area of research, see e.g.~\cite{FKMS}, \cite{IMP}, \cite{KMSV2}, \cite{M}, \cite{MK},  \cite{shi}.

In dimension 1, the problem is still open in both directions, and the ``tiling implies spectrum" direction hinges on proving (T2) for all finite tiles (\cite{LW1}, \cite{LW2}, \cite{L}; see also \cite{dutkay-lai} for an overview of the problem and an investigation of the converse direction). Our Corollary \ref{manyprimes}, combined with \cite[Theorem 1.5]{L} and \cite[Corollary 6.2]{LaLo1}, yields the following result.

\begin{corollary}\label{cor-Fuglede}
Let $M$ satisfy the assumptions of Corollary \ref{manyprimes}. Then:

\begin{itemize}
\item[(i)] If $A\subset\ZZ_M$ tiles $\ZZ_M$ by translations, then it is spectral.

\item[(ii)] Let $A\subset \ZZ$ be a finite set such that $A$ mod $M$ tiles $\ZZ_{M}$, and let ${F=\bigcup_{a\in A}[a,a+1]}$, so that $F$ tiles $\RR$ by translations. Then $F$ is spectral. 
\end{itemize}

\end{corollary}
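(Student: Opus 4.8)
The plan is to deduce Corollary \ref{cor-Fuglede} directly from Corollary \ref{manyprimes} by invoking the two external results cited in the statement, so essentially no new tiling theory is needed here — the work is in correctly assembling the implications.

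\textbf{Part (i).} First I would recall that by Corollary \ref{manyprimes}, for any $M$ satisfying the stated hypotheses and any $M|M^*$, both tiles in every tiling $A\oplus B=\ZZ_M$ satisfy (T2); in particular, if $A\subset\ZZ_M$ tiles $\ZZ_M$, then $A$ satisfies (T1) as well (by Theorem \ref{CM-thm}, since $A$ tiles implies (T1) unconditionally). Thus $A$ satisfies both (T1) and (T2). The next step is to quote \cite[Theorem 1.5]{L}: a finite set satisfying (T1) and (T2) is spectral (for the cyclic group $\ZZ_M$, this is the ``tiling $\Rightarrow$ (T1)+(T2) $\Rightarrow$ spectral'' chain, where the last implication is the content of that theorem). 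One should double-check that the ambient modulus is handled correctly — $A$ tiling $\ZZ_M$ means $M$ is a period, so $S_A\subset\{p^\alpha: p^\alpha|M\}$ and the cyclotomic divisibility conditions are exactly those governed by divisors of $M$; since every divisor of $M$ divides $M^*$, Corollary \ref{manyprimes} applies. This yields (i).

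\textbf{Part (ii).} Here I would pass from the cyclic-group statement to the statement on $\RR$. Given $A\subset\ZZ$ finite with $A$ mod $M$ tiling $\ZZ_M$, part (i) gives that $A$ mod $M$ is a spectral subset of $\ZZ_M$. The bridge to $\RR$ is the standard ``structure theorem'' correspondence (used in \cite{L} and going back to \cite{LW1}, \cite{LW2}): a set of the form $F=\bigcup_{a\in A}[a,a+1]$ with $A$ tiling $\ZZ_M$ is spectral in $\RR$ precisely when $A$ is a spectral subset of $\ZZ_M$, with spectrum of the form $\frac1M(\text{spectrum of }A)\oplus\ZZ$ after suitable normalization. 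So the step is: invoke this equivalence to transfer spectrality of $A$ mod $M$ in $\ZZ_M$ to spectrality of $F$ in $\RR$. That $F$ tiles $\RR$ is immediate from $A\oplus B=\ZZ_M$: the tiling set is $B\oplus M\ZZ$. I would phrase this as a short paragraph citing \cite{L} and \cite[Corollary 6.2]{LaLo1} (the latter presumably supplies the needed (T2) input for any auxiliary moduli not already covered, or the reduction of the real-line problem to the cyclic case).

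\textbf{Main obstacle.} There is no serious mathematical obstacle — the proof is a citation assembly — but the one point requiring care is bookkeeping about which moduli appear. The hypotheses of Corollary \ref{manyprimes} constrain the \emph{exponents} (at most three primes with exponent $>1$, subject to \eqref{alpha-e1} or all three equal to $2$, plus the growth condition \eqref{growth-e1}); I must make sure that when $A$ mod $M$ tiles $\ZZ_M$ with $M|M^*$, the relevant divisor structure is genuinely covered, i.e.\ that passing to $M|M^*$ (and to the complementary tile $B$) does not land outside the hypothesis class. This is exactly why Corollary \ref{manyprimes} is stated ``for any $M|M^*$'', so I would simply note that the divisor-closure is built into that statement and cite it as such, without re-deriving it.
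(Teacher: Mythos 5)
Your proposal is correct and follows essentially the same route as the paper, which proves this corollary purely by citation assembly: Corollary \ref{manyprimes} gives (T2) (and Theorem \ref{CM-thm} gives (T1)) for the tile, \cite[Theorem 1.5]{L} converts (T1)+(T2) into spectrality, and the standard cyclic-to-real transfer handles part (ii). Your slight uncertainty about the exact role of \cite[Corollary 6.2]{LaLo1} is harmless, since the paper itself gives no more detail than the same list of citations.
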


As mentioned above, our arguments are based on \emph{splitting} (Section \ref{splitting-section}). Given a fiber in $\ZZ_M$ (an arithmetic progression of step $M/p$ and length $p$ for some prime divisor $p|M$), we ask which elements of $A$ and $B$ tile the elements of that progression. It is not difficult to prove that, for each fiber separately, these elements must follow a certain ``splitting'' pattern. If these splitting patterns are uniform for all $M$-fibers corresponding to some prime factor $p$, and if this uniformity persists under dilations of the sets $A$ and $B$, we are able to apply a tiling reduction developed in \cite[Section 6.2]{LaLo1}. 

If the assumption (\ref{e-largeprime}) holds, it is relatively easy both to prove such uniformity and to set up an inductive argument proving (T2). For other classes of tilings, further work is needed. Towards this end, in Section \ref{consistency} we prove several partial results on splitting consistency on grids that are needed in the proof of Theorem \ref{T2-3primes-thm} in \cite{LL-even}.

An additional goal of this paper is to revisit and simplify some of our results from \cite{LaLo1, LaLo2}. In Section \ref{product-dilations}, we provide 
a simple proof of Theorem \ref{GLW-thm} based on the dilation theorems of Sands and Tijdeman, and a new combinatorial interpretation of concepts (such as saturating sets) derived from that theorem. In Section \ref{tiling-reductions}, we give a new formulation of the {\it slab reduction} from \cite{LaLo1} in terms of splitting. This offers an easier way to verify the criteria of the reduction (see \cite[Section 9]{LL-even} for examples), and supplies a key part of our proof of Theorem \ref{T2-largeprime}.

%%%%%%%%%%%%%%%%%%%%%%%%%%%%%%%%%%%%%

\section{Notation and preliminaries}\label{sec-prelim}

%%%%%%%%%%%%%%%%%%%%%%%%%%%%%%%%%%%%%

We assume that $M=p_1^{n_1}\dots p_K^{n_K}$, where $p_1,\dots,p_K$ are distinct primes and $n_1,\dots,n_K\in\NN$. If $A\subset\ZZ_M$, its mask polynomial is $A(X)=\sum_{a\in A}X^a$.
We use the convolution notation $A*B$ for the weighted sumset of $A$ and $B$ (interpreted as a multiset if necessary), so that $(A*B)(X)=A(X)B(X)$.
If one of the sets is a singleton, say $A=\{x\}$, we write $x*B=\{x\}*B$. 
The direct sum notation $A\oplus B$ is reserved for tilings, so that $A\oplus B=\ZZ_M$ means that $A,B\subset\ZZ_M$ are both sets and $A(X)B(X)=\frac{X^M-1}{X-1}$ mod $X^M-1$.

The Chinese Remainder Theorem identifies the cyclic group $\ZZ_M$ with $\ZZ_{p_1^{n_1}}\oplus \ZZ_{p_2^{n_2}}\oplus\ldots \oplus\ZZ_{p_K^{n_K}}$. This allows us to interpret $\ZZ_M$ as a $K$-dimensional lattice, periodic in each direction, with
an explicit coordinate system defined as follows. For $j\in\{1,\dots,K\}$, let 
$M_j: = M/p_j^{n_j}= \prod_{i\neq j} p_i^{n_i}$. 
Then each $x\in \ZZ_M$ has a unique representation
$$
x=\sum_{j=1}^K x_j M_j,\ \ x_j\in \ZZ_{p_j^{n_j}}.
$$
For $D|M$, a {\em $D$-grid} in $\ZZ_M$ is a set of the form
$$
\Lambda(x,D):= x*D\ZZ_M=\{x'\in\ZZ_M:\ D|(x-x')\}
$$
for some $x\in\ZZ_M$. We note a few important special cases.

\begin{itemize}
\item A {\em line} through $x\in\ZZ_M$ in the $p_\nu$ direction is the set
$\ell_\nu(x):= \Lambda(x,M_\nu)$.
\item A {\em plane} through $x\in\ZZ_M$ perpendicular to the $p_\nu$ direction, on the scale $M_\nu p_\nu^{\alpha_\nu}$, is the set
$\Pi(x,p_\nu^{\alpha_\nu}):=\Lambda(x,p_\nu^{\alpha_\nu}).$

\item A {\em fiber in the $p_\nu$ direction} is a set of the form $x*F_\nu$, where $x\in\ZZ_M$ and
%\begin{equation}\label{def-Fi}
$$
F_\nu=\{0,M/p_\nu,2M/p_\nu,\dots,(p_\nu-1)M/p_\nu\}.
$$
%\end{equation}
Thus $x*F_\nu=\Lambda(x,M/p_\nu)$.  
%\item A {\em top-scale grid} is a grid $\Lambda(x,D(M))$ for some $x\in \ZZ_M$, where
%$$
%D(M)=\prod_{j=1}^K p_j^{n_j-1}.
%$$

\end{itemize}

A set $A\subset \ZZ_M$ is {\em fibered in the $p_j$ direction} if there is a subset $A'\subset A$ such that $A=A'*F_j$.
Since in this paper we only consider fibers at the top scale $M$, we use the terms ``fiber" and ``fibered" instead of ``$M$-fiber" and ``$M$-fibered" as in \cite{LaLo1}, \cite{LaLo2}, \cite{LL-even}.

%%%%%%%%%%%%%%%%%%%%%%%%%%%%%%%%%%%%%%%%

\section{Divisor isometries and the box product}\label{product-dilations}

%%%%%%%%%%%%%%%%%%%%%%%%%%%%%%%%%%%%%

\subsection{Divisor isometries}\label{Diviso}

We continue to assume that $M=p_1^{n_1}\dots p_K^{n_K}$, where $p_1,\dots,p_K$ are distinct primes. 
The {\em divisor set} of $A\subset\ZZ_M$ is the set
%\begin{equation}\label{divisors}
$$
\Div(A):=\{(a-a',M):\ a,a'\in A\}
$$
%\end{equation}

Our analysis here is motivated in part by the fundamental theorems of Sands \cite{Sands} and Tijdeman \cite{Tij}, which we now state.

\begin{theorem}\label{thm-sands} {\bf (Divisor exclusion; Sands \cite{Sands})}
Let $A,B\subset \ZZ_M$. Then $A\oplus B=\ZZ_M$
if and only if $|A|\,|B|=M$ and 
$$
%\begin{equation}\label{div-exclusion}
\Div(A) \cap \Div(B)=\{M\}.
%\end{equation}
$$
\end{theorem}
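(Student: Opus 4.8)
The plan is to treat the two implications separately, the converse being the substantial one.

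For the ``if'' direction, assume $|A|\,|B|=M$ and $\Div(A)\cap\Div(B)=\{M\}$. I would show that the addition map $A\times B\to\ZZ_M$, $(a,b)\mapsto a+b$, is injective. Indeed, if $a+b=a'+b'$ with $(a,b)\neq(a',b')$, then $a\neq a'$ (otherwise $b=b'$), so $a-a'=b'-b$ is a nonzero element of $\ZZ_M$, and $d:=\gcd(a-a',M)$ is a proper divisor of $M$ lying both in $\Div(A)$ (via $a,a'$) and in $\Div(B)$ (via $b,b'$), contradicting the hypothesis. An injection between finite sets of the same cardinality $M$ is a bijection, which is exactly the statement $A\oplus B=\ZZ_M$.

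For the ``only if'' direction, the equality $|A|\,|B|=M$ is immediate, and $M\in\Div(A)\cap\Div(B)$ always holds via the diagonal. Suppose, for contradiction, that some proper $d\mid M$ lies in $\Div(A)\cap\Div(B)$; pick a prime $p\mid M/d$, and choose $a,a'\in A$ and $b,b'\in B$ with $\gcd(a-a',M)=\gcd(b-b',M)=d$. Replacing $A,B$ by dilates $kA,kB$ with $\gcd(k,M)=1$ (a group automorphism of $\ZZ_M$: it preserves the tiling, the cardinalities, and all divisor data) and translating, I would put these witnesses into a convenient normal form. The heart of the matter is the case $M=p^n$. There the divisors of $M$ are totally ordered, it is classical that tilings of $\ZZ_{p^n}$ are rigid --- up to translation every tile equals a direct sum $\bigoplus_{i\in I}\{0,p^{i-1},2p^{i-1},\dots,(p-1)p^{i-1}\}$ with $I=\{i:\Phi_{p^i}(X)\mid A(X)\}$ --- and one checks directly that $\Div(A)=\{p^n\}\cup\{p^{i-1}:i\in I\}$. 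Since $A\oplus B=\ZZ_{p^n}$ forces $\{i:\Phi_{p^i}\mid A\}$ and $\{i:\Phi_{p^i}\mid B\}$ to be complementary (each $\Phi_{p^i}$ divides exactly one of $A,B$, because $\frac{X^{p^n}-1}{X-1}$ is squarefree), the sets $\Div(A)$ and $\Div(B)$ meet only in $\{p^n\}$, contradicting $d\in\Div(A)\cap\Div(B)$ with $d<p^n$.

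The main obstacle is the reduction of the general case to the prime-power case. The naive projection $\ZZ_M\to\ZZ_{p^n}$ turns the tiling into a ``tiling with multiplicities,'' and the displayed formula for $\Div$ genuinely fails for the resulting multisets, so this route collapses. Instead I expect to induct on the number of distinct prime factors of $M$: at each step one examines the top-scale $p$-fiber counts of $A$ and $B$, a pair of functions $\mu,g\colon\ZZ_{M/p}\to\{0,1,\dots,p\}$ obeying the convolution identity $\mu\ast g\equiv p$, and shows first that at most one of $\mu,g$ can exceed $1$ anywhere (this handles $d=M/p$) and then, more generally, that the prime $p$ can be stripped from the divisor condition, descending to a genuine tiling over a group with one fewer prime. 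Establishing the rigidity of such bounded convolutions, and organizing the descent cleanly, is where the real work lies.
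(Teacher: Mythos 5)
Your ``if'' direction is correct and complete. The ``only if'' direction, however, has a genuine gap: essentially all of the content of the theorem is concentrated in the steps you defer. The claim that ``at most one of $\mu,g$ can exceed $1$ anywhere'' is exactly the statement $M/p\notin\Div(A)\cap\Div(B)$, i.e.\ a special case of what is to be proved, and it does not follow from the convolution identity $\mu*g\equiv p$ together with the bound $0\le\mu,g\le p$ alone; the tiling structure must enter, and you do not say how. The ``stripping'' step is also unsubstantiated: if $p^2\mid M$ then $M/p$ has the same number of distinct prime factors, and even when $g\le 1$ everywhere the projected object is a tiling of $\ZZ_{M/p}$ with multiplicity $p$ by the multiset $\mu$, not a genuine tiling, so the inductive hypothesis does not apply --- this is the same obstruction you correctly identified for the naive projection. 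Finally, your prime-power base case invokes a false rigidity statement: not every tile of $\ZZ_{p^n}$ is a translate of a standard direct sum. For example $\{0,2,4\}\oplus\{0,3,6\}=\ZZ_9$, and $\{0,2,4\}$ is not a translate of $\{0,1,2\}$ or of $\{0,3,6\}$. (The divisor-set conclusion you want does hold for such tiles, but your derivation of it does not.)

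The proof the paper points to (see the discussion following Lemma \ref{basic-r}, which attributes the argument to Sands) is short and bypasses all of this. Suppose $d=(a-a',M)=(b-b',M)$ with $a\neq a'$ and $b\neq b'$. By Lemma \ref{basic-r} there exists $r$ with $(r,M)=1$ and $r(a-a')=-(b-b')$, hence $ra+b=ra'+b'$ with $ra\neq ra'$. This contradicts the uniqueness of representation in the tiling $rA\oplus B=\ZZ_M$, which is guaranteed by Theorem \ref{thm-tijdeman} (or by Sands' Theorem 2, proved independently of divisor exclusion). No reduction to prime powers is needed; if you wish to rescue your inductive scheme, this dilation step is precisely the ``rigidity'' input your sketch is missing.
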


\begin{theorem}\label{thm-tijdeman} {\bf (Dilation invariance of tiling; Tijdeman \cite{Tij})}
Let $A\oplus B=\ZZ_M$, and let $r\in\ZZ_M$ satisfy $(r,|A|)=1$. Then $rA\oplus B=\ZZ_M$ is also a tiling, where $rA=\{ra: \ a\in A\}$.
\end{theorem}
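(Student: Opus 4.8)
This is a theorem of Tijdeman; I indicate how one would prove it within the mask‑polynomial framework available here.

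The plan is to reduce to a prime dilation and then verify the cyclotomic criterion. Factor $r$ into primes (with multiplicity); since $\gcd(r,|A|)=1$, each such prime is coprime to $|A|$. It therefore suffices to prove that, for a prime $q$ with $q\nmid|A|$, a tiling $A\oplus B=\ZZ_M$ implies the polynomial congruence
\[
A(X^q)\,B(X)\equiv 1+X+\dots+X^{M-1}\pmod{X^M-1}.
\]
Indeed, the multiset $qA\oplus B$ then covers every element of $\ZZ_M$ with multiplicity exactly one; if some $y$ occurred in $qA$ with multiplicity $\ge 2$, then $y+b$ (for any $b\in B$) would be covered at least twice, so $qA$ must be a genuine set, and summing multiplicities shows it has size $|A|$. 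Hence $qA\oplus B=\ZZ_M$, and iterating over the prime factors of $r$ (the cardinality being preserved at each stage, so the next prime is again coprime to it) yields $rA\oplus B=\ZZ_M$.

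To verify the displayed congruence it is enough to evaluate at every $M$-th root of unity $\zeta$. At $\zeta=1$ both sides equal $|A||B|=M$. Let $\zeta$ be a primitive $s$-th root with $1<s\mid M$; then the right side is $0$ and $\zeta^q$ is a primitive $s'$-th root, $s'=s/\gcd(s,q)$, so we must show $A(\zeta^q)B(\zeta)=0$. If $q\nmid s$ then $s'=s$: from $A\oplus B=\ZZ_M$ either $B(\zeta)=0$, and we are done, or $A(\zeta)=0$, i.e.\ $\Phi_s\mid A(X)$, in which case $A(\zeta^q)=0$ as well. If $s$ is a power of $q$: since $A$ tiles, property (T1) of Theorem \ref{CM-thm} gives $|A|=\prod_{p^\gamma\in S_A}p$, so $q\nmid|A|$ forces $\Phi_{q^\gamma}\nmid A(X)$ for every $\gamma$, whence $\Phi_s\mid B(X)$ by the tiling and $B(\zeta)=0$.

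The only remaining case, and the crux, is $s=q^\beta t$ with $\beta\ge1$, $q\nmid t$, $t>1$, where $s'=q^{\beta-1}t>1$. If $B(\zeta)=0$ we are done; otherwise $\Phi_s\mid A(X)$ while $\Phi_s\nmid B(X)$, and one must show $\Phi_{s/q}\mid A(X)$. This is where $q\nmid|A|$ is indispensable, and I would establish the following: in a tiling $A\oplus B=\ZZ_M$ with $q\nmid|A|$, if $\Phi_{q^\beta t}\mid A(X)$ with $q\nmid t$ and $t>1$, then $\Phi_{q^{\beta-1}t}\mid A(X)$. By the previous paragraph $q\nmid|A|$ forces $\Phi_{q^\gamma}\mid B(X)$ for all $\gamma$ with $q^\gamma\mid M$, i.e.\ $B$ is equidistributed modulo the exact power of $q$ dividing $M$. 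To exploit this I would decompose the group algebra $\ZZ[\ZZ_{q^\beta}\times\ZZ_t]$ into isotypic components: after fixing a primitive $t$-th root of unity, $\Phi_{q^\beta t}\mid A$ says precisely that the $q$-adic ``slices'' of $A$ (its restrictions to the fibres of $\ZZ_{q^\beta}\to\ZZ_{q^{\beta-1}}$) become congruent modulo $\Phi_t$, whereas $\Phi_{q^{\beta-1}t}\mid A$ is the sharper statement that their grouped sums vanish modulo $\Phi_t$; the gap between the two assertions is a multiple of $q$ in $\ZZ[\zeta_t]$, and one must then show it vanishes — this is where the equidistribution of $B$, fed back through the tiling equation restricted to the cosets of the subgroup $q^{m}\ZZ_M$, enters (the general case $\beta\ge2$ being treated one $q$-level at a time). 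Carrying out this last step carefully is the main technical obstacle; it is essentially Tijdeman's key lemma, and can alternatively be extracted from the structure theory of integer tilings (cf.\ \cite{CM}).
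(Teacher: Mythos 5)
The paper does not prove Theorem~\ref{thm-tijdeman}; it is quoted as a classical result of Tijdeman, so the comparison here is against the standard proof rather than an argument in the text. Your reduction steps are fine: passing to a single prime $q\nmid|A|$, deducing that $qA$ is a genuine set of size $|A|$ once the congruence $A(X^q)B(X)\equiv 1+X+\dots+X^{M-1}$ is known, and the root-of-unity cases $q\nmid s$ and $s=q^\gamma$ (the latter via (T1), whose proof in \cite{CM} is independent of Tijdeman, so there is no circularity). But the argument is not complete: the case $s=q^\beta t$ with $\beta\ge 1$, $q\nmid t$, $t>1$, $\Phi_s\mid A$, $\Phi_s\nmid B$ --- where you must produce $\Phi_{q^{\beta-1}t}\mid A$ --- is exactly the content of the theorem, and you leave it as a ``technical obstacle.'' The sketch you offer for it does not work as stated: the equidistribution of $B$ modulo powers of $q$ only records the prime-power divisibilities $\Phi_{q^\gamma}\mid B$, and there is no indicated mechanism by which this forces the composite divisibility $\Phi_{q^{\beta-1}t}\mid A$; the claim that ``the gap between the two assertions is a multiple of $q$ in $\ZZ[\zeta_t]$, and one must then show it vanishes'' is a restatement of what is to be proved, not an argument. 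So this is a genuine gap, and it sits precisely at the crux.

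For the record, the standard proof bypasses the cyclotomic case analysis entirely and is shorter than your setup. Work modulo $q$ and modulo $X^M-1$: by the Frobenius identity $A(X^q)\equiv A(X)^q \pmod q$, so with $\Delta(X)=1+X+\dots+X^{M-1}$,
\begin{equation*}
A(X^q)B(X)\equiv A(X)^{q-1}\,A(X)B(X)\equiv A(X)^{q-1}\Delta(X)\equiv |A|^{q-1}\Delta(X)\equiv \Delta(X) \pmod{q,\;X^M-1},
\end{equation*}
using $X^a\Delta(X)\equiv\Delta(X)$ and Fermat's little theorem with $q\nmid|A|$. The reduction of $A(X^q)B(X)$ modulo $X^M-1$ has $M$ nonnegative integer coefficients summing to $|A|\,|B|=M$, each congruent to $1$ modulo $q$; hence each equals $1$, which is the desired congruence over $\ZZ$. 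If you want to salvage your cyclotomic route, note that your unproven lemma in the crux case is a known consequence of this argument (and appears in \cite{CM}), but it should be derived from the theorem, not used to prove it.
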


\begin{definition}\label{divisor-isometries}
Let $\psi:\ZZ_M\to \ZZ_M$ be a mapping. We will say that $\psi$ is a {\em divisor isometry} if for every $x,x'\in\ZZ_M$ we have
$$
(\psi(x)-\psi(x'),M)=(x-x',M).
$$
\end{definition}

Examples of divisor isometries include translations $\tau_c(x)=x-c$ for any $c\in\ZZ_M$ and 
invertible dilations $\psi_r(x)= rx$ for any $r\in R$, where
$$
R:=\{r\in\ZZ_M:\ (r,M)=1\}.
$$ 
We note that the composition of any number of translations and dilations is always a linear transformation. An example of a divisor isometry that is not a linear mapping is provided by a plane exchange mapping, defined as follows.
Let $c,c'\in\ZZ_M$ satisfy $(c-c',M)=M_ip_i^{\alpha-1}$ for some $i\in\{1,\dots,K\}$ and $0< \alpha \leq n_i$. Then
$$
\exch(c,c',p_i^{\alpha})(x):=\begin{cases}
x+(c'-c) &\hbox{ if }x\in\Pi(c,p_i^{\alpha}),\\
x+(c-c') &\hbox{ if }x\in\Pi(c',p_i^{\alpha}),\\
x&\hbox{ if }x\not\in\Pi(c,p_i^{\alpha_i})\cup \Pi(c',p_i^{\alpha_i}).
\end{cases}
$$

Lemma \ref{isometry-lemma} below is an immediate consequence of Theorem \ref{thm-sands}.

\begin{lemma}\label{isometry-lemma}
Let  $\psi:\ZZ_M\to \ZZ_M$ be a divisor isometry. If $A\oplus B=\ZZ_M$ is a tiling, then so is $\psi(A)\oplus B=\ZZ_M$.
\end{lemma}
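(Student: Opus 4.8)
The plan is to deduce Lemma \ref{isometry-lemma} directly from Sands' divisor exclusion criterion (Theorem \ref{thm-sands}). The first observation is that a divisor isometry $\psi$ preserves cardinality: it is injective, since $\psi(x)=\psi(x')$ would force $(x-x',M)=(\psi(x)-\psi(x'),M)=(0,M)=M$, hence $x=x'$ in $\ZZ_M$; being an injection of the finite set $\ZZ_M$ to itself, it is a bijection, so $|\psi(A)|=|A|$. In particular $|\psi(A)|\,|B|=|A|\,|B|=M$, which takes care of the cardinality half of Sands' condition.

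Next I would compute the divisor set of the image. By the defining property of a divisor isometry,
$$
\Div(\psi(A))=\{(\psi(a)-\psi(a'),M):a,a'\in A\}=\{(a-a',M):a,a'\in A\}=\Div(A).
$$
Since $A\oplus B=\ZZ_M$ is a tiling, Theorem \ref{thm-sands} gives $\Div(A)\cap\Div(B)=\{M\}$, and therefore $\Div(\psi(A))\cap\Div(B)=\Div(A)\cap\Div(B)=\{M\}$. Combining this with $|\psi(A)|\,|B|=M$ and applying the converse direction of Theorem \ref{thm-sands} yields $\psi(A)\oplus B=\ZZ_M$, which is the desired conclusion.

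There is essentially no obstacle here: the only point requiring a moment's care is that $\psi$ need not be linear or even structure-preserving in any algebraic sense (as the plane-exchange example shows), so one must argue the bijectivity purely from the divisor-isometry property rather than assuming it; and one should note that $\Div(A)$ is defined as a set of gcd values, so the equality $\Div(\psi(A))=\Div(A)$ is an equality of sets, exactly what Sands' criterion consumes. Everything else is a direct substitution into Theorem \ref{thm-sands}, which is why the lemma is labeled an immediate consequence.
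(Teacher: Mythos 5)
Your proof is correct and follows exactly the route the paper intends: the paper gives no written proof, stating only that the lemma is an immediate consequence of Theorem \ref{thm-sands}, and your argument (injectivity of $\psi$ from the divisor-isometry property, hence $|\psi(A)|=|A|$, plus $\Div(\psi(A))=\Div(A)$, then both directions of Sands' criterion) is precisely that consequence spelled out. No gaps.
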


%
%
%\begin{proof}
%For $x,x',c\in \ZZ_M$ and $r\in R$, we have $\tau_c(x)-\tau_c(x')=x-x'$ and
%$$(\psi_r(x)-\psi_r(x'),M)=(rx-rx',M)=(r(x-x'),M)=(x-x',M),$$
%implying (i) and (ii). To prove (iii), let $x,x'\in\ZZ_M$, and let $f=\exch(c,c',p_i^{\alpha})$ for short.
% If $x,x'\in\Pi(c,p_i^{\alpha})$, or if $x,x'\not\in\Pi(c,p_i^{\alpha_i})\cup \Pi(c',p_i^{\alpha_i})$, then $f(x)-f(x')=x-x'$ and we are done. 
%It remains to consider the case when $x\in\Pi(c,p_i^{\alpha})$ and $x'\not\in\Pi(c,p_i^{\alpha})$. Then 
%$$
%(f(x)-f(x'))-(x-x')=\nu(c'-c)
%$$
%(with $\nu=1$ or $2$ depending on whether $x'\in\Pi(c',p_i^{\alpha})$)
%is divisible by $M_ip_i^{\alpha-1}$, so that $(x-x',M_ip_i^{\alpha-1})=(f(x)-f(x'), M_ip_i^{\alpha-1})$. Since 
%$p_i^{\alpha}\nmid (x-x')$, the claim follows.
%\end{proof}
%

We use $\phi$ to denote the Euler totient function: if 
$n=\prod_{i=1}^L q_i^{\beta_i}$, where $q_1,\dots,q_L$ are distinct primes and $\beta_i\in\NN$, then
$
\phi(n)= \prod_{i=1}^L (q_i-1)q_i^{\beta_i-1}.
$

\begin{lemma}\label{basic-r} {\bf (Properties of dilations)}
For $r\in R$, let $\psi_r$ be the dilation $\psi_r(z)=rz$.
%\begin{itemize}
%\item[(i)] If $D|M$, then $\psi_r(\Lambda(x,D))=\Lambda(rx,D)$. In particular, $\psi_r(\Pi(x,p_\nu^{\alpha}))= \Pi(rx,p_\nu^{\alpha})$.
%\item[(ii)] 
Let $x,x'\in\ZZ_M$ with $(x,M)=(x',M)=m$. Let
$$
R_{x,x'}:=\{r\in R:\ \psi_r(x)=x'\}.
$$
Then
$$
%\begin{equation}\label{count-r}
|R_{x,x'}|=\frac{\phi(M)}{\phi(M/m)}.
%\end{equation}
$$
Moreover, for any $r\in R_{x,x'}$ we have 
$$R_{x,x'}=\Lambda(r,M/m)\cap R.$$
%\end{itemize}
\end{lemma}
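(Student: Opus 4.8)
The plan is to compute the cardinality of $R_{x,x'}$ by first reducing to the case $x=x'$, i.e.\ understanding the stabilizer of $x$ inside the multiplicative group $R=(\ZZ/M\ZZ)^\times$, and then using that $R$ acts on the set $\{y\in\ZZ_M:(y,M)=m\}$ with $R_{x,x'}$ being a coset of that stabilizer. First I would observe that $R_{x,x'}$ is nonempty: since $(x,M)=(x',M)=m$, both $x$ and $x'$ generate the same cyclic subgroup $\langle m\rangle\cong\ZZ_{M/m}$ of $\ZZ_M$, hence $x'=rx$ for some $r$ with $(r,M/m)=1$, and by the Chinese Remainder Theorem one can adjust $r$ modulo $M/m$ on the primes dividing $m$ so that in fact $(r,M)=1$; thus some $r\in R$ works. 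Fixing one such $r_0\in R_{x,x'}$, for any other $r\in R$ we have $\psi_r(x)=x'=\psi_{r_0}(x)$ iff $(r-r_0)x\equiv 0\pmod M$ iff $r\equiv r_0\pmod{M/m}$ (using $(x,M)=m$, so the annihilator of $x$ is $(M/m)\ZZ_M$). Therefore $R_{x,x'}=\{r\in R:\ r\equiv r_0\bmod M/m\}=\Lambda(r_0,M/m)\cap R$, which is exactly the ``Moreover'' clause; note this also shows the description is independent of the chosen representative.

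It then remains to count $|\Lambda(r_0,M/m)\cap R|$, i.e.\ the number of $r\in\ZZ_M$ with $r\equiv r_0\pmod{M/m}$ and $(r,M)=1$. I would split $M$ according to the primes dividing $m$ and those not, using CRT. Write $M=\prod_i p_i^{n_i}$ and let $m=\prod_i p_i^{k_i}$ with $0\le k_i\le n_i$. The congruence $r\equiv r_0\pmod{M/m}$ pins down $r$ modulo $p_i^{n_i-k_i}$ for each $i$; the remaining freedom is a choice of $r$ in $\ZZ_{p_i^{k_i}}$-many residues lifting this (for each $i$), and among those lifts we must further impose $(r,p_i)=1$. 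When $k_i=0$ there is no freedom and the condition $(r_0,p_i)=1$ is automatic since $r_0\in R$. When $k_i\ge 1$, the number of lifts $r\bmod p_i^{n_i}$ of a fixed $r\bmod p_i^{n_i-k_i}$ with $p_i\nmid r$ is $p_i^{k_i}-p_i^{k_i-1}=(p_i-1)p_i^{k_i-1}$ if $p_i\nmid r_0$ already forces... more carefully: since $p_i^{n_i-k_i}\mid M/m$ and $r_0$ is coprime to $M$, if $n_i-k_i\ge 1$ then $r\equiv r_0\not\equiv 0\pmod{p_i}$ automatically, so all $p_i^{k_i}$ lifts are coprime to $p_i$; if $n_i-k_i=0$ (i.e.\ $k_i=n_i$) then there is no constraint from the congruence at $p_i$ and we get $\phi(p_i^{n_i})=(p_i-1)p_i^{n_i-1}$ choices. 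In all cases the local count equals $\phi(p_i^{n_i})/\phi(p_i^{n_i-k_i})$ (with the convention $\phi(1)=1$), and multiplying over $i$ via CRT gives $\prod_i \phi(p_i^{n_i})/\phi(p_i^{n_i-k_i})=\phi(M)/\phi(M/m)$, as claimed.

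The main obstacle, such as it is, is bookkeeping the prime-by-prime case analysis cleanly — in particular handling the primes $p_i\mid m$ with $k_i=n_i$ (where the congruence imposes nothing at $p_i$ and the coprimality must be imposed directly) versus $k_i<n_i$ (where coprimality is inherited from $r_0$), and making sure the final product telescopes to $\phi(M)/\phi(M/m)$. A slightly slicker alternative, which I would probably use to avoid the casework, is to argue group-theoretically: the reduction map $\ZZ_M\to\ZZ_{M/m}$ restricts to a surjective group homomorphism $R=(\ZZ/M)^\times\to(\ZZ/(M/m))^\times$ whose kernel has order $|R|/|(\ZZ/(M/m))^\times|=\phi(M)/\phi(M/m)$; the surjectivity is the standard fact that every unit mod $M/m$ lifts to a unit mod $M$ (again CRT prime by prime). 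Then $R_{x,x'}$, being a single fiber $\Lambda(r_0,M/m)\cap R$ of this map, has exactly $\phi(M)/\phi(M/m)$ elements.
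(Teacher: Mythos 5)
Your proof is correct and follows essentially the same route as the paper: establish that $R_{x,x'}$ is nonempty, identify it as the coset $\Lambda(r_0,M/m)\cap R$ via the annihilator of $x$, and then count. The paper performs the count by computing the density of units within the grid $\Lambda(r_0,M/m)$ prime by prime, which is the same computation as your CRT case analysis (or your kernel-of-the-reduction-map argument) in slightly different packaging.
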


\begin{proof}
%For (i), if $x,x'\in\ZZ_M$ and $r\in R$, then $D|(x-x')$ if and only if $D|(rx-rx')$.
%
%We now prove (ii). 
Let $x,x'\in\ZZ_M$ with $(x,M)=(x',M)=m$. We first check that there exists at least one $r\in R$ such that $rx=x'$. Indeed, since $(x/m,M)=(x'/M)=1$, the numbers $x/m$ and $x'/m$ are invertible in $\ZZ_M$. Let $r=(x/m)^{-1}x'/m$, then $(r,M)=1$ and $rx=(x/m)^{-1}(x'/m)x=(x/m)^{-1}(x/m)x'=x'$.

With $r$ as above, we have $r'\in R_{x,x'}$ if and only if $r\in R$ and $rx=r'x=x'$, so that $(r-r')x=0$ in $\ZZ_M$. Since $(x,M)=m$, we need $r-r'$ to be divisible by $M/m$. In other words, $r'\in R\cap\Lambda(r,M/m)$.

We now count the number $Y$ of elements $r'\in\Lambda(r,M/m)$ such that $(r',M)=1$. Without loss of generality, we may assume that $M/m=p_1^{\alpha_1}\dots p_l^{\alpha_l}$ for some $l\leq K$ and $\alpha_1,\dots,\alpha_l\geq 1$. Then
$|\Lambda(r,M/m)|=m$, and
\begin{align*}
Y&=m\cdot\prod_{j=l+1}^K \frac{p_{j}-1}{p_{j}} ,\\
\phi(M/m)Y &= \frac{M}{m}\, \prod_{j=1}^l \frac{p_{j}-1}{p_{j}}\cdot m\, \prod_{j=l+1}^K \frac{p_{j}-1}{p_{j}}
= \phi(M)
\end{align*}
as claimed.
\end{proof}

\begin{lemma}\label{multitransitive} {\bf (Multitransitivity of dilations)}
For $\nu=1,\dots,K$, let $x_\nu,x'_\nu\in\ZZ_M$ with $(x_\nu,M)=(x'_\nu,M)=M/p_\nu^{\alpha_\nu}$, where $\alpha_1,\dots,\alpha_K\geq 1$. Then there exists $r\in R$ such that $rx_\nu=x'_\nu$ for $\nu=1,\dots,K$.
\end{lemma}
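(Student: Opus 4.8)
The plan is to obtain the statement by applying Lemma \ref{basic-r} to each coordinate direction separately and then gluing the resulting dilations together via the Chinese Remainder Theorem; this is just a simultaneous version of the existence half of Lemma \ref{basic-r}.

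First, I would fix $\nu\in\{1,\dots,K\}$ and set $m_\nu:=M/p_\nu^{\alpha_\nu}$, so that $(x_\nu,M)=(x'_\nu,M)=m_\nu$ and $\alpha_\nu\ge 1$ forces $p_\nu^{\alpha_\nu}\mid M$. By Lemma \ref{basic-r} the set $R_{x_\nu,x'_\nu}$ is nonempty; pick any $r_\nu\in R_{x_\nu,x'_\nu}$. Since $M/m_\nu=p_\nu^{\alpha_\nu}$, the same lemma gives
$$
R_{x_\nu,x'_\nu}=\Lambda(r_\nu,p_\nu^{\alpha_\nu})\cap R=\{r\in R:\ r\equiv r_\nu \pmod{p_\nu^{\alpha_\nu}}\}.
$$
Thus the task reduces to producing a single $r\in R$ that lies in all $K$ of these congruence classes at once.

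Next, because $p_1,\dots,p_K$ are distinct primes, the moduli $p_1^{\alpha_1},\dots,p_K^{\alpha_K}$ are pairwise coprime and each divides $M$. Using the identification $\ZZ_M\cong\ZZ_{p_1^{n_1}}\oplus\dots\oplus\ZZ_{p_K^{n_K}}$ (and $\alpha_\nu\le n_\nu$), I would choose $r\in\ZZ_M$ with $r\equiv r_\nu\pmod{p_\nu^{\alpha_\nu}}$ for every $\nu$; such an $r$ exists and is unique modulo $\prod_\nu p_\nu^{\alpha_\nu}$ by the Chinese Remainder Theorem. The final point to verify is that this $r$ is invertible: each $r_\nu$ lies in $R$, so $p_\nu\nmid r_\nu$, and since $\alpha_\nu\ge 1$ we have $r\equiv r_\nu\pmod{p_\nu}$, hence $p_\nu\nmid r$. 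As this holds for every prime divisor $p_\nu$ of $M$, we get $(r,M)=1$, i.e.\ $r\in R$, and therefore $r\in R_{x_\nu,x'_\nu}$ for each $\nu$, which is precisely the claim that $rx_\nu=x'_\nu$ for all $\nu$.

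I do not expect a genuine obstacle here; the argument is a routine CRT gluing. The only subtlety worth flagging explicitly is that the invertibility requirement $(r,M)=1$ is not an extra constraint competing with the congruences — it is automatically implied by them, precisely because each $\alpha_\nu\ge 1$ and each local solution $r_\nu$ was already coprime to $M$.
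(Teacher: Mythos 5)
Your proof is correct and follows essentially the same route as the paper: apply Lemma \ref{basic-r} in each direction to get $r_\nu$ with $R_{x_\nu,x'_\nu}=\Lambda(r_\nu,p_\nu^{\alpha_\nu})\cap R$, glue the congruences by CRT, and verify $(r,M)=1$ using $\alpha_\nu\ge 1$ and $p_\nu\nmid r_\nu$. The only difference is cosmetic — you spell out the CRT step that the paper phrases as picking $r$ in the intersection of the planes $\Pi(r_\nu,p_\nu^{\alpha_\nu})$.
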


\begin{proof}
By Lemma \ref{basic-r}, for each $\nu$ there exists $r_\nu\in R$ such that $r_\nu x_\nu=x'_\nu$. Furthermore,
$$
R_{x_\nu,x'_\nu}=\Pi(r_\nu,p_\nu^{\alpha_\nu})\cap R.
$$
Let $r\in\bigcap_{\nu=1}^K \Pi(r_\nu,p_\nu^{\alpha_\nu})$. We claim that $r\in R$. Indeed, suppose that $p_\nu|r$ for some $\nu$. Since $p_\nu^{\alpha_\nu}|r-r_\nu$ and $\alpha_\nu\geq 1$, this implies that $p_\nu|r_\nu$, contradicting the fact that $r_\nu\in R$. This establishes the claim, and proves that $r$ satisfies the conclusion of the lemma.
\end{proof}

A brief discussion of the relationship between Theorem \ref{thm-sands}, Theorem \ref{thm-tijdeman}, and Lemma \ref{isometry-lemma} is in order. 
The special case of Tijdeman's theorem with $r\in R$ was first proved by Sands \cite[Theorem 2]{Sands}. Sands used this to prove Theorem \ref{thm-sands} \cite[Theorem 3]{Sands}. 
Lemma \ref{isometry-lemma} uses Theorem \ref{thm-sands} to extend \cite[Theorem 2]{Sands} to more general divisor isometries.

Theorem \ref{thm-tijdeman} extends Theorem \ref{thm-sands} in a different direction. Instead of assuming as in \cite[Theorem 2]{Sands} that $r\in R$, Tijdeman only assumes that $r$ is relatively prime to $|A|$. This is a significantly weaker assumption, since the mapping $x\to rx$ with $(r,|A|)=1$ need not be a divisor isometry if $|B|$ has prime factors that $|A|$ does not have. Coven and Meyerowitz \cite{CM} used the full strength of Theorem \ref{thm-tijdeman} to prove that if $|A|$ tiles the integers, then it must in fact tile a cyclic group $\ZZ_M$ for some $M$ with the same prime factors as $|A|$.

%%%%%%%%%%%%%%%%%%%%%%%%%%%%%%%

\subsection{A combinatorial interpretation of the box product}\label{box product revisited}

Following \cite{LaLo1, LaLo2} we define for 
$x\in\ZZ_M$
\begin{align*}
\bbA_m[x] & = \# \{a\in A:\ (x-a,M)=m \}.
\end{align*}
If $X\subset \ZZ_M$ and $x\in\ZZ_M$, we write
$\bbA_m[x|X] = \# \{a\in A\cap X: \ (x-a,N)=m\}$.

For $A,B\subset\ZZ_M$, we define their 
{\em box product}
$$
%\begin{equation}\label{inner-product}
\langle \bbA[x], \bbB[y] \rangle = \sum_{m|M} \frac{1}{\phi(M/m)} \bbA_m[x] \bbB_m[y].
%\end{equation}
$$

\begin{theorem}\label{GLW-thm}
(\cite{LaLo1}; following \cite[Theorem 1]{GLW}) If
$A\oplus B=\ZZ_M$ is a tiling, then 
\begin{equation}\label{e-ortho2}
\langle \bbA[x], \bbB[y] \rangle =1\ \ \forall x,y\in\ZZ_M.
\end{equation}
\end{theorem}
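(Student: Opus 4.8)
The plan is to prove the box product identity \eqref{e-ortho2} by relating it to the Sands divisor exclusion condition (Theorem \ref{thm-sands}) via a counting argument over dilations. The key observation is that for a fixed pair $x,y\in\ZZ_M$, the quantity $\langle \bbA[x],\bbB[y]\rangle$ can be rewritten by grouping the pairs $(a,b)\in A\times B$ according to the common value $m=(x-a,M)=(y-b,M)$, but that the natural way to produce such pairs with a prescribed structure is to count, for each $r\in R$, the pairs $(a,b)$ with $x-a = r(y-b)$. First I would exploit Tijdeman's theorem (Theorem \ref{thm-tijdeman}), or more precisely Lemma \ref{isometry-lemma} applied to the dilation $\psi_r$ for $r\in R$: since $rA\oplus B=\ZZ_M$ for every $r\in R$, every element of $\ZZ_M$ — in particular $x-r y$ (after a translation) — has exactly one representation as $ra + b'$ with $a\in A$, $b'\in B$. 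Reinterpreting this, for each $r\in R$ there is exactly one pair $(a,b)\in A\times B$ with $x-ra = -(y-b)\cdot\text{(something)}$; the cleanest bookkeeping is that $\#\{(a,b)\in A\times B:\ x-a=r(y-b)\text{ in }\ZZ_M\}=1$ for all $r\in R$.

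Next I would sum this identity over all $r\in R$, obtaining $|R|=\phi(M)$ on the right. On the left, I would interchange the order of summation: for a fixed pair $(a,b)\in A\times B$, I count the number of $r\in R$ with $x-a = r(y-b)$. If $(x-a,M)\ne(y-b,M)$, there is no such $r$ (a dilation is a divisor isometry, so it preserves the gcd with $M$), and this pair contributes nothing. If $(x-a,M)=(y-b,M)=m$, then by Lemma \ref{basic-r} the number of such $r$ is exactly $\phi(M)/\phi(M/m)$. Therefore
$$
\phi(M)=\sum_{(a,b)\in A\times B}\#\{r\in R:\ x-a=r(y-b)\}
=\sum_{m\mid M}\frac{\phi(M)}{\phi(M/m)}\,\bbA_m[x]\,\bbB_m[y],
$$
and dividing by $\phi(M)$ gives exactly $\langle \bbA[x],\bbB[y]\rangle=1$.

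The main obstacle is the first step: one must be careful that the set of $r\in R$ realizing $x-a=r(y-b)$ for a fixed admissible pair $(a,b)$ is genuinely nonempty and has the size predicted by Lemma \ref{basic-r} — which is exactly what Lemma \ref{basic-r} provides, since it computes $|R_{u,u'}|$ for $u,u'$ of equal gcd with $M$ and even identifies this set as $\Lambda(r,M/m)\cap R$. A secondary subtlety is the edge case $y-b=0$ (equivalently $m=M$, which can only happen for $b=y$): then $x-a=r(y-b)=0$ forces $a=x$ as well, and the count of valid $r$ is all of $R$, i.e. $\phi(M)=\phi(M)/\phi(1)$, consistent with the formula; one should also note that when $x\notin A$ or $y\notin B$ the $m=M$ term simply vanishes. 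Once these bookkeeping points are handled, the argument is a clean double count, and this is presumably the ``simple proof based on the dilation theorems of Sands and Tijdeman'' promised in the introduction.
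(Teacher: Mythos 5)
Your argument is correct and is essentially the paper's own proof: both count the triples $(a,b,r)\in A\times B\times R$ satisfying the dilated tiling relation (getting $\phi(M)$ one way via Tijdeman, and $\sum_{m\mid M}\frac{\phi(M)}{\phi(M/m)}\bbA_m\bbB_m$ the other way via Lemma \ref{basic-r}). The only cosmetic differences are that the paper first normalizes to $x=y=0$ by translation and dilates $A$ rather than $B$; your handling of the $m=M$ edge case is a nice touch but not an issue in either version.
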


The proof of Theorem \ref{GLW-thm} in \cite{GLW}, \cite{LaLo1} uses discrete harmonic analysis. Below, we present an alternative combinatorial proof, based on averaging over dilations $\psi_r$ with $r\in R$.

\begin{proof}
Suppose that $A\oplus B=\ZZ_M$. Without loss of generality, we may assume that $x=y=0$. By Theorem \ref{thm-tijdeman}, we have $rA\oplus B=\ZZ_M$ for all $r\in R$. 

We count the number of triples $(a,b,r)\in A\times B\times R$ such that $ra+b=0$. We have $|R|=\phi(M)$, and for every $r\in R$, it follows from Theorem \ref{thm-tijdeman} that there is exactly one pair $(a,b)\in A\times B$ such that $ra+b=0$. Thus the number of such triples is $\phi(M)$.

On the other hand, let $m|M$, and suppose that $b\in B$ satisfies $(b,M)=m$. Then $(-b,M)=m$, and by Lemma \ref{basic-r}, for every $a\in A$ with $(a,M)=m$ we have
$$
|\{r\in R:\ \psi_r(a)=-b\}|=\frac{\phi(M)}{\phi(M/m)}.
$$
Thus the number of triples as above is also equal to
$$
\sum_{m|M} \frac{\phi(M)}{\phi(M/m)} \bbA_m[0] \bbB_m[0].
$$
The identity follows.
\end{proof}

%%%%%%%%%%%%%%%%%%%%%%%%%%%%%%%%%%%%%

\subsection{Saturating sets}\label{satsets}

Let $A\oplus B= \ZZ_M$, and $x,y\in\ZZ_M$. We define
$$
A_{x,y}:=\{a\in A:\ (x-a,M)=(y-b,M) \hbox{ for some }b\in B\},
$$
and similarly for $B_{y,x}$ with $A$ and $B$ interchanged. 
These are the sets that saturate the box product in (\ref{e-ortho2}), in the sense that
$$
\langle \bbA[x| A_{x,y} ], \bbB[y| B_{y,x} ] \rangle =1.
$$
We also define
the {\em saturating set} for $x$ with respect to $A$, 
$$
A_{x}:=\{a\in A: (x-a,M)\in\Div(B)\} =\bigcup_{b\in B} A_{x,b},
$$
with $B_{y}$ defined similarly.

We note an alternative description of saturating sets, based on dilations and closely related to the alternative proof of Theorem \ref{GLW-thm} presented above. 

\begin{lemma}\label{satsets-to-dilations}
Let $A\oplus B=\ZZ_M$ be a tiling, and let $x,y\in\ZZ_M$, $a\in A$, $b\in B$. Then the following are equivalent:
\begin{itemize}
\item[(i)] $a\in A_{x,y}$ and $b\in B_{y,x}$,
\item[(ii)] there exists $r\in R$ such that $x-a=r(y-b)$,
\item[(iii)] there exists $r\in R$ such that $0=(a-x) + (-r)(b-y)$ in the tiling $\tau_x(A)\oplus (-r)\tau_y(B)=\ZZ_M$, where we use $\tau_x$ to denote translations as in Section \ref{Diviso}.
\end{itemize}
\end{lemma}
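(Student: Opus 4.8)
The plan is to prove the cycle of equivalences $(i)\Rightarrow(ii)\Rightarrow(iii)\Rightarrow(i)$, with the bulk of the work in $(i)\Rightarrow(ii)$; the other two implications are essentially bookkeeping. Throughout, set $m:=(x-a,M)$.

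\textbf{Step 1: $(i)\Rightarrow(ii)$.} Suppose $a\in A_{x,y}$ and $b\in B_{y,x}$. By definition of $A_{x,y}$, there is some $b'\in B$ with $(y-b',M)=m$; in particular $m\in\Div(B)$. I would like to conclude directly that $(y-b,M)=m$ as well — but this is exactly where one has to be careful, since a priori $b\in B_{y,x}$ only guarantees $(y-b,M)\in\Div(A)$, not that it equals $m$. This is the step I expect to be the main obstacle, and the way around it is to invoke the box product identity of Theorem \ref{GLW-thm} together with the saturation statement recorded just above the lemma, namely $\langle \bbA[x|A_{x,y}],\bbB[y|B_{y,x}]\rangle=1$. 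Indeed, expanding the box product over divisors $m'|M$ and using that each term $\frac{1}{\phi(M/m')}\bbA_{m'}[x|A_{x,y}]\bbB_{m'}[y|B_{y,x}]$ is nonnegative, the identity $=1$ forces strong structural constraints; in the relevant arguments of \cite{LaLo1} one shows that a fixed $a\in A_{x,y}$ (with $(x-a,M)=m$) can only be ``matched'' against elements $b\in B$ with $(y-b,M)=m$. Concretely, once we know $(x-a,M)=(y-b,M)=m$, we have $(x-a,M)=(y-b,M)=m$, so both $(x-a)/m$ and $(y-b)/m$ are elements of $\ZZ_M$ coprime to $M/m\cdot$(appropriate part), and by the first part of Lemma \ref{basic-r} there exists $r\in R$ with $r(y-b)=x-a$. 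This is the desired $r$ in (ii).

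\textbf{Step 2: $(ii)\Rightarrow(i)$.} Conversely, if $r\in R$ satisfies $x-a=r(y-b)$, then since $\psi_r$ is a divisor isometry (it is an invertible dilation, one of the examples in Section \ref{Diviso}) we get $(x-a,M)=(r(y-b),M)=(y-b,M)$. Call this common value $m$. Then $(y-b,M)=m=(x-a,M)\in\Div(A)$, so $b\in B_{y,x}$; symmetrically $(x-a,M)=(y-b,M)\in\Div(B)$, so $a\in A_{x,y}$. Combined with Step 1, this gives $(i)\Leftrightarrow(ii)$.

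\textbf{Step 3: $(ii)\Leftrightarrow(iii)$.} This is a translation of coordinates. By Lemma \ref{isometry-lemma} (or directly by Theorems \ref{thm-sands} and \ref{thm-tijdeman}), if $A\oplus B=\ZZ_M$ then $\tau_x(A)\oplus(-r)\tau_y(B)=\ZZ_M$ is also a tiling for any $r\in R$, since $\tau_x$, $\tau_y$ and $\psi_{-r}$ are all divisor isometries. Now the equation $0=(a-x)+(-r)(b-y)$ is literally $x-a=-r\cdot(-(b-y))$, i.e. $x-a=r(y-b)$ after regrouping signs; more precisely $(a-x)+(-r)(b-y)=0 \iff (x-a)=(-r)(b-y)=r(y-b)$. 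So the existence of $r\in R$ in (iii) is equivalent to the existence of $r\in R$ in (ii), with the element $a-x\in\tau_x(A)$ and $(-r)(b-y)\in(-r)\tau_y(B)$ being precisely the unique pair summing to $0$ guaranteed by Tijdeman's theorem applied to the translated-dilated tiling. This closes the cycle and completes the proof.

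I expect Step 1 to be the only substantive point: it is where the box product identity (rather than just the definitions) must be used to upgrade ``$(y-b,M)\in\Div(A)$'' to the exact matching ``$(y-b,M)=(x-a,M)$,'' after which Lemma \ref{basic-r} produces the dilation. Steps 2 and 3 are formal consequences of the divisor-isometry framework already set up in Section \ref{product-dilations}.
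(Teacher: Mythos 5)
Your Steps 2 and 3 are fine and match the paper's (very short) argument, but Step 1 contains a genuine gap: the ``upgrade'' you try to extract from the box product identity is false. The saturation identity $\langle \bbA[x|A_{x,y}],\bbB[y|B_{y,x}]\rangle=1$ only says that the nonzero terms of the box product are supported on $A_{x,y}$ and $B_{y,x}$; it does not force the sum to concentrate on a single divisor $m$, and so it cannot force an arbitrary pair $(a,b)\in A_{x,y}\times B_{y,x}$ to satisfy $(x-a,M)=(y-b,M)$. Indeed this matching genuinely fails under the literal reading of the definitions: in the tiling $\{0,3\}\oplus\{0,1,2\}=\ZZ_6$, take $x=1$, $y=0$; then $A_{1,0}=\{0,3\}$ and $B_{0,1}=\{1,2\}$, but for $a=0$, $b=2$ one has $(x-a,6)=1\neq 2=(y-b,6)$, so no $r$ with $(r,6)=1$ satisfies $x-a=r(y-b)$. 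So if (i) is read as two independent memberships, the lemma itself is false, and no amount of averaging will rescue your Step 1.

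The resolution is that condition (i) is to be read as the \emph{joint} condition $(x-a,M)=(y-b,M)$, i.e.\ $a$ and $b$ witness each other's membership; this is exactly how the paper uses the notation (see the parenthetical ``in other words, $(a-\nu M/p_i,M)=(b-\nu'M/p_i,M)$'' in Lemma \ref{divallign}). Under that reading, (i)$\Leftrightarrow$(ii) is immediate from Lemma \ref{basic-r}: if $(x-a,M)=(y-b,M)=m$, the first part of that lemma produces $r\in R$ with $r(y-b)=x-a$, and conversely $x-a=r(y-b)$ with $r\in R$ forces equality of the gcds because $\psi_r$ is a divisor isometry (your Step 2). No appeal to Theorem \ref{GLW-thm} is needed or helpful here. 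Your Step 3 correctly treats (iii) as a reformulation of (ii).
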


\begin{proof}
The equivalence between (i) and (ii) follows from Lemma \ref{basic-r}. Part (iii) is a reformulation of (ii).
\end{proof}

%%%%%%%%%%%%%%%%%%%%%%%%%%%%%%%%%%%%%%

\section{Splitting}\label{splitting-section}

%%%%%%%%%%%%%%%%%%%%%%%%%%%%%%%%%%%%%%%%%%%%

\subsection{Splitting for fibers} We continue to assume that $M=p_1^{n_1}\dots p_K^{n_K}$ and $A\oplus B=\ZZ_M$.

\begin{definition}
For a set $Z\subset \ZZ_M$, define
\begin{align*}
\Sigma_{A}(Z)&=\{a\in A:\ z=a+b\hbox{ for some }z\in Z,\ b\in B\},
\\
\Sigma_{B}(Z)&=\{b\in B:\ z=a+b\hbox{ for some }z\in Z,\ a\in A\}.
\end{align*}
\end{definition}

These are the sets of elements of $A$ and $B$ that tile $Z$.
Note that $\Sigma_{A}(Z)$ depends on both $A$ and $B$; whenever more than one tiling complement of $A$ is being considered,  we will identify the relevant tiling explicitly.
%If $Z=\{z\}$ is a singleton, we will simplify the notation and write $\Sigma_A(z)=\Sigma_A(\{z\})$.

\begin{definition}\label{def-splitting}
Let $Z=x*F_i\subset \ZZ_M$ be a fiber in the $p_i$ direction. We will say that $Z$ {\em splits with parity $(A,B)$} if:
\begin{itemize}
\item[(i)] $p_i^{n_i}|a-a'$ for any $a,a'\in\Sigma_A(Z)$,
\item[(ii)] $p_i^{n_i-1}\parallel b-b'$ for any two distinct $b,b'\in \Sigma_B(Z)$. 
\end{itemize}
\end{definition}

An extension to lower scales is easy and could be added with only minimal additional effort, but will not be needed here.

\begin{lemma}\label{no-upgrades}{\bf (Splitting for fibers)}
Every fiber $Z$ splits with parity either $(A,B)$ or $(B,A)$. In particular, if $Z$ is a fiber in the $p_i$ direction, then for any $a\in \Sigma_A(Z)$ and $b\in \Sigma_B(Z)$, we have
$\Sigma_A(Z)\subset\Pi(a,p_i^{n_i-1})$ and $\Sigma_B(Z)\subset\Pi(b,p_i^{n_i-1})$.
\end{lemma}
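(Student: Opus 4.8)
The plan is to analyze a single fiber $Z = x*F_i$ directly using Sands' divisor exclusion theorem (Theorem \ref{thm-sands}). Write $\Sigma_A := \Sigma_A(Z)$ and $\Sigma_B := \Sigma_B(Z)$. Since $A \oplus B = \ZZ_M$ restricts on the fiber $Z$ to give $\Sigma_A * \Sigma_B$ covering $Z$ (with appropriate counting), the key observation is that every element $z\in Z$ is written uniquely as $a+b$ with $a\in\Sigma_A$, $b\in\Sigma_B$, and all such $z$ lie in one coset of $(M/p_i)\ZZ_M$; equivalently $|\Sigma_A|\,|\Sigma_B| = p_i$. Because $p_i$ is prime, one of $|\Sigma_A|,|\Sigma_B|$ equals $1$ and the other equals $p_i$. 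This dichotomy is what will ultimately produce the two parities.

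The main work is to show: if $|\Sigma_A| = 1$ (say $\Sigma_A = \{a\}$), then $\Sigma_B = a*Z = x*F_i$ translated, i.e. $\Sigma_B$ is a full fiber, hence $p_i^{n_i-1}\parallel b-b'$ for distinct $b,b'\in\Sigma_B$; and symmetrically for $|\Sigma_B|=1$. First I would argue that when $|\Sigma_A|=1$, the single element $a$ must tile all of $Z$ together with $\Sigma_B$, forcing $\Sigma_B = (-a)*Z$; since $Z$ is a coset of $F_i = \{0, M/p_i, \dots, (p_i-1)M/p_i\}$, consecutive differences in $F_i$ are multiples of $M/p_i = M_i p_i^{n_i-1}$ but not of $M_i p_i^{n_i}$ — wait, $M/p_i$ has exactly $p_i^{n_i-1}$ as its $p_i$-part, so indeed $p_i^{n_i-1}\parallel (b-b')$ for distinct elements, giving Definition \ref{def-splitting}(ii). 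For part (i), the condition $|\Sigma_A|=1$ trivially gives $p_i^{n_i}|a-a'$ since there is only one element. Thus $|\Sigma_A|=1$ yields parity $(A,B)$, and $|\Sigma_B|=1$ yields parity $(B,A)$ by symmetry.

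The hard part — or the part requiring the most care — is ruling out or correctly handling the case where \emph{both} $|\Sigma_A|>1$ and $|\Sigma_B|>1$; but since $|\Sigma_A|\,|\Sigma_B|=p_i$ is prime this cannot occur, so the real subtlety is just establishing that product formula rigorously. For this I would localize the tiling: the elements of $Z$ form an arithmetic progression of length $p_i$ and step $M/p_i$, and each is covered exactly once by $\Sigma_A * \Sigma_B$. One clean way is to apply Theorem \ref{thm-sands} to the pair $(\Sigma_A, \Sigma_B)$ viewed inside the fiber after translating $Z$ to contain $0$: divisor exclusion forces $\Div(\Sigma_A)\cap\Div(\Sigma_B) = \{M\}$ together with $|\Sigma_A||\Sigma_B| = p_i$, and combined with the fact that all differences within $Z$ are divisible by $M/p_i = M_i p_i^{n_i-1}$, every nontrivial difference in $\Sigma_A$ (resp. $\Sigma_B$) has gcd with $M$ a proper multiple of $M_i p_i^{n_i-1}$, i.e. equals $M_i p_i^{n_i-1}$ exactly unless it is $0$; disjointness then forces one of the two sets to be a singleton. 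Finally, the ``In particular'' clause follows immediately: if $Z$ splits with parity $(A,B)$ then for $a,a'\in\Sigma_A$ we have $p_i^{n_i}\mid a-a'$, so $\Sigma_A\subset\Pi(a,p_i^{n_i})\subset\Pi(a,p_i^{n_i-1})$, while for $b,b'\in\Sigma_B$ we have $p_i^{n_i-1}\mid b-b'$ so $\Sigma_B\subset\Pi(b,p_i^{n_i-1})$; the case of parity $(B,A)$ is symmetric.
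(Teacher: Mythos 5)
Your argument is built on the claim that $|\Sigma_A(Z)|\,|\Sigma_B(Z)|=p_i$, so that one of the two sets must be a singleton; this claim is false, and the singleton dichotomy is not the right dichotomy. The pairs $(a,b)\in\Sigma_A(Z)\times\Sigma_B(Z)$ do not form a product tiling of $Z$: only the $p_i$ specific pairs $(a_\nu,b_\nu)$ with $a_\nu+b_\nu=z_\nu$ are relevant, and distinct $a_\nu$'s may coexist with distinct $b_\nu$'s provided they differ in coordinates other than the $p_i$-th one. Concretely, take $M=6$, $A=\{0,3\}$, $B=\{0,1,2\}$, and the fiber $Z=F_2=\{0,2,4\}$ in the $p_2=3$ direction. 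Then $0=0+0$, $2=0+2$, $4=3+1$, so $\Sigma_A(Z)=\{0,3\}$ and $\Sigma_B(Z)=\{0,1,2\}$: neither is a singleton, the product of the cardinalities is $6\neq 3$, and yet the fiber does split with parity $(A,B)$, since $3\mid 3-0$ while $3\nmid b-b'$ for distinct $b,b'\in\Sigma_B(Z)$. The same example refutes your auxiliary claim that every difference within $\Sigma_A(Z)$ or $\Sigma_B(Z)$ has gcd with $M$ a multiple of $M/p_i$ (here $1-0=1$ is not divisible by $M/p_2=2$): only the \emph{sums} $a_\nu+b_\nu$ lie in a single coset of $(M/p_i)\ZZ_M$, not the summands separately, so the application of Theorem \ref{thm-sands} to $(\Sigma_A,\Sigma_B)$ "inside the fiber" does not deliver what you need.

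The lemma is genuinely a statement about $p_i$-adic valuations of differences, not about cardinalities, and the workable route is pairwise. Writing $z_\nu=\nu M/p_i$ and $a_\nu+b_\nu=z_\nu$, for $\nu\neq\nu'$ one has $(a_\nu-a_{\nu'})+(b_\nu-b_{\nu'})=(\nu-\nu')M/p_i$, hence $(a_\nu-a_{\nu'},M/p_i)=(b_\nu-b_{\nu'},M/p_i)$; divisor exclusion forbids $(a_\nu-a_{\nu'},M)=(b_\nu-b_{\nu'},M)$ unless both differences vanish, which is impossible since $z_\nu\neq z_{\nu'}$, so the two gcds can only disagree in the $p_i$-part: one difference is divisible by $p_i^{n_i}$ and the other exactly by $p_i^{n_i-1}$. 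This assigns a parity to each pair $(\nu,\nu')$, and a short consistency step (compare a third index $\nu''$ against both $\nu$ and $\nu'$, noting that $p_i^{n_i}$ cannot divide both $b_\nu-b_{\nu''}$ and $b_{\nu'}-b_{\nu''}$ when $p_i^{n_i}\nmid b_\nu-b_{\nu'}$) shows all pairs carry the same parity. Your final paragraph deducing the "in particular" clause is fine once the dichotomy is in hand, but as written the proof does not reach it.
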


\begin{proof}
Let $\{z_0,z_1,\dots,z_{p_i-1}\}\subset \ZZ_M$ be a fiber in the $p_i$ direction. 
Without loss of generality, we may assume that $z_\nu=\nu M/p_i$ for $\nu=0,1,\dots,p_i-1$. 

Let $a_\nu\in A,b_\nu\in B$ satisfy $a_\nu+b_\nu=z_\nu$. Then for $\nu\neq\nu'$,
$$
(a_\nu-a_{\nu'})+ (b_\nu-b_{\nu'})=(a_\nu 
+b_\nu)- (a_{\nu'}+b_{\nu'})=(\nu-\nu')M/p_i,
$$
so that $(a_\nu-a_{\nu'},M/p_i)= (b_\nu-b_{\nu'},M/p_i)$.
The only way to reconcile this with divisor exclusion is to have either
\begin{equation}\label{split-e10}
p_i^{n_i}|a_\nu-a_{\nu'}
\hbox{ and }p_i^{n_i-1}\parallel b_\nu-b_{\nu'}
\end{equation}
or the same with $A$ and $B$ interchanged.

Assume now that (\ref{split-e10}) holds for some $\nu,\nu'$. Then for any other $\nu''\in\{0,1,\dots,p_i-1\}$, we must have either $p_i^{n_i}\nmid b_\nu-b_{\nu''}$ or 
$p_i^{n_i}\nmid b_{\nu'}-b_{\nu''}$. In both cases, by (\ref{split-e10}) with either $\nu$ or $\nu'$ replaced by $\nu''$, it follows that $p_i^{n_i}|a_\nu-a_{\nu''}$ and $p_i^{n_i-1}\parallel b_\nu-b_{\nu'}$. This implies splitting with parity $(A,B)$.
Similarly, if (\ref{split-e10}) holds for some $\nu,\nu'$ with $A$ and $B$ interchanged, we get splitting with parity $(B,A)$ instead.
\end{proof}

\begin{lemma}{\bf (Translation invariance of splitting)} \label{translate-splitting}
Let $c\in \ZZ_M$% and denote $A'=A-c$
. If $z*F_i$ splits with parity $(A,B)$ for some $z\in\ZZ_M$, then $(z-c)*F_i$ splits with parity $(\tau_c(A),B)$. Similarly, if $z*F_i$ splits with parity $(B,A)$, then $(z-c)*F_i$ splits with parity $(B,\tau_c(A))$.
	
\end{lemma}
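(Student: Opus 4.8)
The plan is to unwind the definitions and track how the tiling $A\oplus B=\ZZ_M$ changes when $A$ is replaced by its translate $\tau_c(A)=A-c$. Recall from Lemma \ref{isometry-lemma} (or directly, since translations are divisor isometries) that $\tau_c(A)\oplus B=\ZZ_M$ is again a tiling, so the notions $\Sigma_{\tau_c(A)}(\cdot)$ and splitting parity relative to this new tiling make sense.

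First I would establish the key bookkeeping identity: for any fiber $z*F_i$, we have
$$
\Sigma_{\tau_c(A)}\bigl((z-c)*F_i\bigr)=\tau_c\bigl(\Sigma_A(z*F_i)\bigr)
\quad\text{and}\quad
\Sigma_B\bigl((z-c)*F_i\bigr)=\Sigma_B(z*F_i).
$$
To see this, note that $w\in(z-c)*F_i$ iff $w+c\in z*F_i$; and if $w+c=a+b$ with $a\in A$, $b\in B$, then $w=(a-c)+b$ with $a-c\in\tau_c(A)$, $b\in B$, and conversely. Hence $a-c$ ranges over the elements of $\tau_c(A)$ tiling $(z-c)*F_i$ exactly as $a$ ranges over the elements of $A$ tiling $z*F_i$, giving the first identity; the second is immediate since the $B$-parts are unchanged. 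This is the crux of the argument and really the only step with any content; everything else is formal.

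Next I would plug these identities into Definition \ref{def-splitting}. Suppose $z*F_i$ splits with parity $(A,B)$. Condition (i) says $p_i^{n_i}\mid a-a'$ for all $a,a'\in\Sigma_A(z*F_i)$; since $(a-c)-(a'-c)=a-a'$, the same divisibility holds for all elements of $\tau_c\bigl(\Sigma_A(z*F_i)\bigr)=\Sigma_{\tau_c(A)}\bigl((z-c)*F_i\bigr)$, which is exactly condition (i) for the fiber $(z-c)*F_i$ with respect to the tiling $\tau_c(A)\oplus B$. Condition (ii) concerns only $\Sigma_B\bigl((z-c)*F_i\bigr)=\Sigma_B(z*F_i)$, which is literally unchanged, so it holds verbatim. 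Therefore $(z-c)*F_i$ splits with parity $(\tau_c(A),B)$. The case of parity $(B,A)$ is symmetric: now condition (i) is about $B$ (unchanged) and condition (ii) is about $\Sigma_{\tau_c(A)}$, where the relation $p_i^{n_i-1}\parallel(a-c)-(a'-c)=a-a'$ is again translation-invariant, yielding parity $(B,\tau_c(A))$.

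I do not anticipate a serious obstacle here; the only thing to be careful about is the direction of the translation (the fiber moves by $-c$ precisely because $A$ moves by $-c$ in $\tau_c$) and the fact that translation by a fixed $c$ preserves every gcd $(x-x',M)$, so no divisibility or exact-divisibility ($\parallel$) condition is affected. One could alternatively phrase the whole proof as a single observation that the map $(a,b)\mapsto(a-c,b)$ is a bijection between the relevant tiling pairs that commutes with the fiber translation $z*F_i\mapsto(z-c)*F_i$ and preserves all $p_i$-adic valuations of differences, but the explicit computation above is already short enough to write out in full.
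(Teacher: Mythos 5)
Your proof is correct and follows essentially the same route as the paper: the key identity $\Sigma_{\tau_c(A)}\bigl((z-c)*F_i\bigr)=\tau_c\bigl(\Sigma_A(z*F_i)\bigr)$ together with $\Sigma_B\bigl((z-c)*F_i\bigr)=\Sigma_B(z*F_i)$, obtained from the bijection $u=a+b\leftrightarrow u-c=(a-c)+b$, is exactly the paper's (one-line) argument, and the remaining check that differences $a-a'$ are unchanged is the same translation-invariance observation. You simply spell out more explicitly what the paper leaves implicit.
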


\begin{proof}
Given $u\in\ZZ_M$, we have $u\in z*F_i$ if and only if $u':= u-c\in (z-c)*F_i$, and $u=a+b$ with $a\in A,b\in B$ if and only if $u'=a'+b$ with $a'=a-c\in \tau_c(A)$ and $b\in B$. 
\end{proof}

Splitting is closely related to saturating sets. Our dilation-based reinterpretation of the latter shows them to be the sets of elements of $A$ and $B$ that tile a fixed element $z$ of $\ZZ_M$ in tilings $A\oplus rB$, where $r$ ranges over an appropriate set of dilations. Splitting does not take dilations into account; however, instead of tiling just one element of $\ZZ_M$, we tile an entire arithmetic progression at the same time. 

\begin{lemma}\label{divallign}{\bf (Splitting and saturating sets)}
	Suppose that $0\in A\cap B$. The following are equivalent:
	\begin{itemize}
		\item[(i)] 
		$a\in A_{x,y}$ and $b\in B_{y,x}$ for $x=\nu M/p_i$ and $y=\nu' M/p_i$
		(in other words,
	$(a-\nu M/p_i,M)=(b-\nu' M/p_i,M)$) for some $\nu,\nu'\in \{0,\ldots,p_i-1\}$,
		\item[(ii)] $a+rb=\nu'' M/p_i$ for some $r\in R$ and $\nu''\in\{0,\ldots,p_i-1\}$.
	\end{itemize}
\end{lemma}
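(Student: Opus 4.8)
The plan is to prove Lemma \ref{divallign} by translating both conditions into statements about a common quantity, namely the gcd $(a - \nu M/p_i, M)$ for appropriate $\nu$, and then invoking Lemma \ref{satsets-to-dilations} (the dilation reinterpretation of saturating sets) essentially verbatim. The two conditions in the lemma differ from those of Lemma \ref{satsets-to-dilations} only in that here $x$ and $y$ are required to lie in a single $p_i$-fiber through the origin, i.e. $x = \nu M/p_i$ and $y = \nu' M/p_i$, and in that the relation $a + rb = \nu'' M/p_i$ in (ii) puts the ``target'' $\nu''M/p_i$ on the right side rather than $0$. So the work is bookkeeping: match up the indices $\nu, \nu', \nu''$ correctly and check the gcd computations go through with $M/p_i \mid$ everything in sight.

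First I would spell out condition (i): it says there exist $\nu, \nu' \in \{0, \ldots, p_i - 1\}$ with $(a - \nu M/p_i, M) = (b - \nu' M/p_i, M)$. By Lemma \ref{satsets-to-dilations} (equivalence of (i) and (ii) there, applied with $x = \nu M/p_i$, $y = \nu' M/p_i$), this holds if and only if there is some $r \in R$ with $\nu M/p_i - a = r(\nu' M/p_i - b)$, i.e. $a + rb = \nu M/p_i - r\nu' M/p_i = (\nu - r\nu')M/p_i \pmod M$. Since $r \in R$ is invertible mod $M$ and in particular $(r, p_i) = 1$, the quantity $(\nu - r\nu') M/p_i$ is again of the form $\nu'' M/p_i$ for some $\nu'' \in \{0, \ldots, p_i-1\}$ — reduce $\nu - r\nu'$ modulo $p_i$. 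This gives (ii) with that $\nu''$.

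Conversely, given (ii), say $a + rb = \nu'' M/p_i$ for some $r \in R$ and $\nu'' \in \{0, \ldots, p_i - 1\}$, I want to produce $\nu, \nu'$ with $\nu - r\nu' \equiv \nu'' \pmod{p_i}$; for instance take $\nu' = 0$ and $\nu = \nu''$, or more symmetrically pick any $\nu'$ and set $\nu = \nu'' + r\nu' \bmod p_i$. Then $a + rb \equiv (\nu - r\nu')M/p_i \pmod M$ wait — one has to be slightly careful: $a+rb$ is determined exactly (not just mod $M/p_i$), so I should instead read off $\nu''$ from the given data and solve $\nu \equiv \nu'' + r\nu' \pmod{p_i}$, which is always solvable. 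With these choices, $\nu M/p_i - a = r(\nu' M/p_i - b)$ holds modulo $M$ (both sides agree because $a + rb = (\nu - r\nu')M/p_i$), so Lemma \ref{satsets-to-dilations} gives $a \in A_{x,y}$, $b \in B_{y,x}$ for $x = \nu M/p_i$, $y = \nu' M/p_i$, which is (i). (The hypothesis $0 \in A \cap B$ is what makes the fibers $\{0, M/p_i, \ldots\}$ the natural base points and is used implicitly through Lemma \ref{satsets-to-dilations}, whose statement presupposes a fixed tiling $A \oplus B = \ZZ_M$.)

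I do not anticipate a serious obstacle: the lemma is a direct corollary of Lemma \ref{satsets-to-dilations} once one observes that multiplying by a unit $r$ with $(r, p_i) = 1$ permutes the cosets $\nu M/p_i$, $\nu \in \{0, \ldots, p_i - 1\}$, among themselves. The only thing requiring a moment's care is the direction of the reduction mod $p_i$ and making sure the congruence $\nu - r\nu' \equiv \nu'' \pmod{p_i}$ is interpreted correctly — that $r$ acts as a genuine bijection on $\ZZ/p_i\ZZ$ — so that passing between ``$\nu''$ given'' and ``$\nu, \nu'$ given'' is lossless in both directions. Everything else is the same gcd-versus-dilation dictionary already established.
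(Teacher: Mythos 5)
Your proof is correct and follows essentially the same route as the paper's one-line argument: apply Lemma \ref{satsets-to-dilations} with $x=\nu M/p_i$, $y=\nu' M/p_i$ and rewrite $x-a=r(y-b)$ as $a+rb=(\nu - r\nu')M/p_i$, which reduces mod $p_i$ to some $\nu''$ (and conversely, taking $\nu'=0$, $\nu=\nu''$ works since $R=-R$). The extra bookkeeping you do about solvability of $\nu\equiv\nu''+r\nu'\pmod{p_i}$ is harmless but unnecessary.
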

\begin{proof}
	We have that (i) holds if and only if $a-\nu M/p_i=-r(b-\nu' M/p_i)$, which in turn is equivalent to $a+rb=(\nu-r\nu') M/p_i$
\end{proof}

%%%%%%%%%%%%%%%%%%%%%%%%%%%%%%%%%

\subsection{Splitting uniformity}\label{uniformity-on-subsets}

%%%%%%%%%%%%%%%%%%%%%%%%%%%%%%%%%%%%%

%We continue to assume that $A\oplus B=\ZZ_M$ with $M=p_1^{n_1}\dots p_K^{n_K}$.

\begin{definition}\label{uniform-splitting}
(i) We say that the tiling $A\oplus B=\ZZ_M$ has {\em uniform $(A,B)$ splitting parity in the $p_i$ direction} if all fibers in the $p_i$ direction split with parity $(A,B)$. Uniform $(B,A)$ splitting parity is defined analogously. 

\medskip

(ii) We say that the tiling $A\oplus B=\ZZ_M$ has {\em $A$-uniform $(A,B)$ splitting parity in the $p_i$ direction} if all fibers $a*F_i$, with $a\in A$, split with parity $(A,B)$. $A$-uniform $(B,A)$ splitting parity is defined analogously as well as the two possible $B$-uniform splitting parities. 

\end{definition}

We now focus on the consequences of $A$-uniform splitting parity.

\begin{lemma}\label{disj_sig_B}
	Assume that $0\in B$, and that $a_0\neq a_1\in A$ satisfy 
	\begin{equation}\label{a_0a_1plane}
		p_i^{n_i}|a_0-a_1.
	\end{equation}
	Assume further that $a_\mu*F_i$ splits with parity $(B,A)$ for $\mu=0,1$. Then $\Sigma_{A}(a_0*F_i)$ and $ \Sigma_{A}(a_1*F_i)$ are disjoint.
\end{lemma}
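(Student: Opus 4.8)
\textbf{Proof plan for Lemma \ref{disj_sig_B}.}

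The plan is to argue by contradiction: suppose there is a common element $a\in \Sigma_A(a_0*F_i)\cap\Sigma_A(a_1*F_i)$. By definition of $\Sigma_A$, there exist $b_0,b_1\in B$ with $a+b_0\in a_0*F_i$ and $a+b_1\in a_1*F_i$. Thus $a\in\Sigma_A(a_\mu*F_i)$ for $\mu=0,1$, and $b_\mu\in\Sigma_B(a_\mu*F_i)$. First I would unpack what the splitting hypothesis gives us. Since $a_\mu*F_i$ splits with parity $(B,A)$, condition (i) of Definition \ref{def-splitting} (with the roles of $A$ and $B$ swapped) says $p_i^{n_i}\mid b-b'$ for any $b,b'\in\Sigma_B(a_\mu*F_i)$, and condition (ii) says $p_i^{n_i-1}\parallel a'-a''$ for distinct $a',a''\in\Sigma_A(a_\mu*F_i)$. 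In particular, note that each $a_\mu$ itself lies in $\Sigma_A(a_\mu*F_i)$ (since $0\in B$ and $a_\mu=a_\mu+0\in a_\mu*F_i$), so $a_\mu$ and $a$ both lie in $\Sigma_A(a_\mu*F_i)$.

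Next I would extract the key divisibility facts. From the splitting parity at $a_0*F_i$: if $a\neq a_0$, then $p_i^{n_i-1}\parallel a-a_0$; if $a=a_0$, proceed directly. Similarly from $a_1*F_i$: if $a\neq a_1$, then $p_i^{n_i-1}\parallel a-a_1$. Now combine with the hypothesis $p_i^{n_i}\mid a_0-a_1$: writing $a-a_1=(a-a_0)+(a_0-a_1)$, since $p_i^{n_i}$ divides the last term but only $p_i^{n_i-1}$ exactly divides $a-a_0$, we get $p_i^{n_i-1}\parallel a-a_1$ automatically — so this is consistent, not yet a contradiction. The contradiction must instead come from looking at the $\Sigma_B$ side together with the fiber structure. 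Here is the crucial observation: $b_0=z_0-a$ and $b_1=z_1-a$ where $z_\mu\in a_\mu*F_i$, so $b_0-b_1=z_0-z_1=(a_0-a_1)+(\text{element of }F_i-F_i)$. Since $F_i-F_i\subset (M/p_i)\ZZ_M$ and $p_i^{n_i}\mid a_0-a_1$, we get $b_0-b_1\in (M/p_i)\ZZ_M$, i.e. $(b_0-b_1,M)$ is a multiple of $M/p_i$ — meaning either $p_i^{n_i}\mid b_0-b_1$ or $p_i^{n_i-1}\parallel b_0-b_1$.

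The main obstacle is seeing why $p_i^{n_i-1}\parallel b_0-b_1$ is impossible. The idea: if $p_i^{n_i-1}\parallel b_0-b_1$, then $(a_0-a_1)+(b_0-b_1) = z_0-z_1$ has $(z_0-z_1, M/p_i) = (b_0 - b_1, M/p_i)$, but then applying divisor exclusion (Theorem \ref{thm-sands}) to the pair $(a_0-a_1, b_0-b_1)$ — both divisors of the form having $p_i$-part respectively $p_i^{n_i}$ and $p_i^{n_i-1}$ — I would check whether their common value off the $p_i$-coordinate forces a nonidentity common divisor. More precisely: $z_0-z_1 \equiv a_0-a_1 \equiv 0 \pmod{M_i}$ (the prime-to-$p_i$ part), so $M_i \mid b_0 - b_1$, hence $(b_0-b_1, M)$ is a multiple of $M_i$; combined with the $p_i$-part being exactly $p_i^{n_i-1}$, we get $(b_0-b_1,M)=M_ip_i^{n_i-1}$, and also $(a_0-a_1,M)$ is a multiple of $M_i$ while $M_i p_i^{n_i-1} \mid a_0 - a_1$ would need checking — actually $a_0 - a_1$ need not be divisible by $M_i$. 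Let me instead route through Lemma \ref{divallign} or Lemma \ref{no-upgrades}: by Lemma \ref{no-upgrades} applied to the fiber $a_0*F_i$ with parity $(B,A)$, we have $\Sigma_A(a_0*F_i)\subset \Pi(a,p_i^{n_i-1})$ and $\Sigma_B(a_0*F_i)\subset\Pi(b_0,p_i^{n_i-1})$; likewise for $a_1*F_i$, $\Sigma_B(a_1*F_i)\subset\Pi(b_1,p_i^{n_i-1})$. If $b_0$ and $b_1$ lay in the same plane $\Pi(\cdot,p_i^{n_i})$ — which is what $p_i^{n_i}\mid b_0-b_1$ says — one then shows the two fibers $a_0*F_i$ and $a_1*F_i$ would be tiled by overlapping or conflicting portions of $B$, contradicting that $A\oplus B$ is a tiling (each element of $\ZZ_M$ covered once). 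The cleanest finish: since $a_0 + b_0' \in a_0 * F_i$ and $a_1 + b_1' \in a_1 * F_i$ for all $b_0'\in\Sigma_B(a_0*F_i)$, $b_1'\in\Sigma_B(a_1*F_i)$, and these two fibers are disjoint (as $p_i^{n_i}\mid a_0-a_1$ forces $a_0*F_i$ and $a_1*F_i$ to be distinct cosets of $F_i$ unless equal — but if equal the lemma is about the same set), the sets $a_0*\Sigma_B(a_0*F_i)$ and $a_1*\Sigma_B(a_1*F_i)$... I would make this precise by tracking that $a$ must simultaneously satisfy incompatible membership in the two saturating-set decompositions, contradicting uniqueness of the tiling representation, and that is the step requiring the most care.
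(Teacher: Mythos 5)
Your setup matches the paper's (a common element $a$ with witnesses $b_0,b_1\in B$ satisfying $a+b_\mu\in a_\mu*F_i$), but the argument does not close, and it stalls precisely where the hypothesis $0\in B$ has to be exploited on the $B$-side. You correctly record that parity $(B,A)$ forces $p_i^{n_i}\mid b-b'$ for $b,b'$ in the \emph{same} set $\Sigma_B(a_\mu*F_i)$, and you use $0\in B$ to place $a_\mu$ in $\Sigma_A(a_\mu*F_i)$; but the decisive observation is the parallel one: since $a_\mu=a_\mu+0$, we have $0\in\Sigma_B(a_0*F_i)\cap\Sigma_B(a_1*F_i)$, so the splitting of \emph{each} fiber gives $p_i^{n_i}\mid b_0$ and $p_i^{n_i}\mid b_1$, hence $p_i^{n_i}\mid b_0-b_1$. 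Without this, $b_0$ and $b_1$ live in different fibers and nothing ties them together, which is why you cannot rule out $p_i^{n_i-1}\parallel b_0-b_1$ --- and divisor exclusion applied to the pair $(a_0-a_1,\ b_0-b_1)$ alone does not exclude that case, since the two gcd's would then simply differ in their $p_i$-parts. (Your intermediate claim that $p_i^{n_i}\mid a_0-a_1$ puts $b_0-b_1$ in $(M/p_i)\ZZ_M$ is also false: divisibility by $p_i^{n_i}$ does not give divisibility by $M/p_i=M_ip_i^{n_i-1}$; only the $p_i$-component of $b_0-b_1$ is controlled this way.)

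Once $p_i^{n_i}\mid b_0-b_1$ is in hand, the finish is short and should be carried out rather than gestured at with ``conflicting portions of $B$'': write $a+b_0=a_0+\nu M/p_i$ and $a+b_1=a_1+\lambda M/p_i$ and subtract to get $b_0-b_1=(a_0-a_1)+(\nu-\lambda)M/p_i$. Since $p_i^{n_i}$ divides the left side and divides $a_0-a_1$, it divides $(\nu-\lambda)M_ip_i^{n_i-1}$, forcing $p_i\mid\nu-\lambda$ and hence $\nu=\lambda$. Then $b_0-b_1=a_0-a_1\neq 0$, so $(a_0-a_1,M)=(b_0-b_1,M)\neq M$, contradicting Theorem \ref{thm-sands}. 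As written, your proposal identifies the right obstacle but does not supply the idea that removes it, so it is not a complete proof.
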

\begin{proof}
Suppose that the conclusion is false. Then there exist $a\in \Sigma_{A}(a_0*F_i)\cap\Sigma_{A}(a_1*F_i)$ and $b_0,b_1 \in \Sigma_B(a_\mu*F_i)$ such that 
	\begin{align*}
		& a+b_0=a_0+\nu M/p_i\\
		& a+b_1=a_1+\lambda M/p_i
	\end{align*}
	for some $\nu,\lambda\in\{1,\ldots ,p_i-1\} $. By the assumptions of the lemma,  $a_0*F_i$, $a_1*F_i$ split with parity $(B,A)$ and $0\in \Sigma_{B}(a_\mu*F_i)$, thus $p_i^{n_i}|b_\mu$ for $\mu =0,1$. Together with (\ref{a_0a_1plane}), this shows that $\nu=\lambda$. Subtracting the two equations above, we get $a_0-a_1=b_0-b_1$. Since $a_0\neq a_1$, this contradicts divisor exclusion. 
\end{proof}

\begin{lemma}\label{localdist}{\bf (Local uniform splitting implies uniform distribution)}
Assume that $0\in B$, and that there exists $a_0\in A$ such that for every element $a\in A\cap \Pi(a_0,p_i^{n_i-1})$ the fiber $a*F_i$ splits with parity $(B,A)$. Then for every $\nu\in \{1,\ldots,p_i-1\}$,
\begin{equation}\label{distr}
|A\cap\Pi(a_0,p_i^{n_i})|=|A\cap\Pi(a_0+\nu M/p_i,p_i^{n_i})|.
\end{equation}
Consequently, $\Phi_{p_i^{n_i}}|(A\cap\Pi(a_0,p_i^{n_i-1}))$.
\end{lemma}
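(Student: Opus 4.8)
The plan is to decompose $A\cap\Pi(a_0,p_i^{n_i-1})$ according to which plane $\Pi(a_0+\nu M/p_i,p_i^{n_i})$ each element lies in, and to build a bijection between consecutive planes using the fibers that tile. First I would translate so that $a_0=0$, which is harmless since $0\in B$ is preserved under no operation here, but the hypothesis ``$0\in B$'' plus translation invariance of splitting (Lemma \ref{translate-splitting}) lets me reduce to $a_0=0$ after absorbing the translation into $A$; I should be a little careful that the hypothesis is stated for all $a\in A\cap\Pi(a_0,p_i^{n_i-1})$, so after translating, it holds for all $a\in A\cap\Pi(0,p_i^{n_i-1})$. Now for each such $a$, the fiber $a*F_i$ splits with parity $(B,A)$, so by Definition \ref{def-splitting}(ii) applied to $B$ (i.e.\ the role of the second coordinate), $\Sigma_A(a*F_i)$ satisfies $p_i^{n_i}\mid a'-a''$ for any $a',a''\in\Sigma_A(a*F_i)$ — wait, with parity $(B,A)$ it is $\Sigma_B$ that sits in a single $p_i^{n_i}$-plane and $\Sigma_A$ that is spread out at scale exactly $p_i^{n_i-1}$; and since $0\in B$ and $a*F_i$ contains $a$, we get $a\in\Sigma_A(a*F_i)$ with complementary $b=0\in\Sigma_B(a*F_i)$, hence $p_i^{n_i}\mid b'$ for all $b'\in\Sigma_B(a*F_i)$, i.e.\ $\Sigma_B(a*F_i)\subset\Pi(0,p_i^{n_i})$.

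The heart of the argument: fix $\nu\in\{1,\dots,p_i-1\}$ and consider the map that sends $a\in A\cap\Pi(0,p_i^{n_i})$ to the unique $a'\in A$ with $a'+b=a+\nu M/p_i$ for some $b\in B$ (uniqueness because $A\oplus B=\ZZ_M$). I claim $a'\in A\cap\Pi(0,p_i^{n_i})$: indeed $a$ and $a+\nu M/p_i$ lie on the same fiber $a*F_i$, and $a\in A\cap\Pi(0,p_i^{n_i-1})$ so that fiber splits with parity $(B,A)$; both $a'\in\Sigma_A(a*F_i)$ and the element of $A$ tiling $a$ itself (which is $a$, via $b=0$) lie in $\Sigma_A(a*F_i)$, hence $p_i^{n_i-1}\mid a-a'$... no, I need $p_i^{n_i}\mid a-a'$. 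Let me reconsider: with parity $(B,A)$, it is $\Sigma_B$ that sits in one $p_i^{n_i}$-plane, so $b\in\Sigma_B(a*F_i)\subset\Pi(0,p_i^{n_i})$ gives $p_i^{n_i}\mid b$, and then $a'=a+\nu M/p_i-b$ lies in $\Pi(0,p_i^{n_i-1})$ and moreover $a'\equiv a+\nu M/p_i\pmod{p_i^{n_i}}$, i.e.\ $a'\in\Pi(\nu M/p_i,p_i^{n_i})=\Pi(0+\nu M/p_i,p_i^{n_i})$ once we further know $a\in\Pi(0,p_i^{n_i})$. So the map actually goes from $A\cap\Pi(0,p_i^{n_i})$ into $A\cap\Pi(\nu M/p_i,p_i^{n_i})$. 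Running the same construction with $-\nu$ (and using that $a'\in A\cap\Pi(0,p_i^{n_i-1})$ too, so its fiber also splits with parity $(B,A)$) produces the inverse map, so it is a bijection, giving \eqref{distr}.

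The main obstacle I anticipate is the bookkeeping in the previous step: verifying that the ``inverse'' map really is inverse requires knowing that $a'$ again lies in a fiber that splits with parity $(B,A)$ — this is why the hypothesis is stated for \emph{all} $a\in A\cap\Pi(a_0,p_i^{n_i-1})$ rather than just $a_0$ — and that the complement $b$ in $a'+b=a+\nu M/p_i$ is the same $b$ used going back, which follows from uniqueness of tiling representations. Lemma \ref{disj_sig_B} is the natural tool for the injectivity of the map on a fixed plane, but I think the cleaner route is the explicit inverse. Once \eqref{distr} holds for every $\nu$, the sizes $|A\cap\Pi(a_0+\nu M/p_i,p_i^{n_i})|$ are all equal, so writing the mask polynomial of $A\cap\Pi(a_0,p_i^{n_i-1})$ and grouping its monomials by residue class mod $p_i^{n_i}$ along the $p_i$ direction, the coefficient vector is constant across the $p_i$ cosets; evaluating at a primitive $p_i^{n_i}$-th root of unity and factoring out the common constant leaves $1+\zeta+\cdots+\zeta^{p_i-1}=0$ where $\zeta=e^{2\pi i p_i^{n_i-1}/p_i^{n_i}}$ is a primitive $p_i$-th root of unity, hence $\Phi_{p_i^{n_i}}\mid (A\cap\Pi(a_0,p_i^{n_i-1}))(X)$. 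I would phrase this last implication carefully in terms of the coordinate system from Section \ref{sec-prelim}, since $\Pi(a_0,p_i^{n_i-1})$ is exactly the union of the $p_i$ fibers over a fixed base point, and divisibility by $\Phi_{p_i^{n_i}}$ is a statement purely about the $x_i$-coordinate distribution.
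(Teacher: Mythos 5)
Your overall strategy --- map $A\cap\Pi(a_0,p_i^{n_i})$ into $A\cap\Pi(a_0+\nu M/p_i,p_i^{n_i})$ by using the $(B,A)$ splitting parity together with $0\in B$ to force $\Sigma_B(a*F_i)\subset\Pi(0,p_i^{n_i})$, then obtain the reverse containment by symmetry of the hypothesis across $\Pi(a_0,p_i^{n_i-1})$ --- is exactly the paper's, and your passage from equidistribution to $\Phi_{p_i^{n_i}}$-divisibility is also fine. The gap is the step where you declare the two maps to be mutually inverse. Write $f(a)=a'$ where $a'+b=a+\nu M/p_i$, and $g(a')=a''$ where $a''+b''=a'-\nu M/p_i$. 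Then $a'-\nu M/p_i=a-b$, so $g(f(a))$ is the $A$-part of the unique tiling representation of the point $a-b$; unless $b=0$ there is no reason for that representation to be $a+(-b)$ --- indeed $-b$ need not even lie in $B$. Uniqueness of tiling representations pins down the decomposition of a \emph{fixed} element of $\ZZ_M$, but $f$ and $g$ interrogate two different elements ($a+\nu M/p_i$ versus $a'-\nu M/p_i$), so it does not make $g\circ f$ the identity, and your stated justification for bijectivity fails.

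The repair is one line and is precisely the route you set aside: each map is injective on its own. If $f(a_1)=f(a_2)=a'$ with $a'+b_1=a_1+\nu M/p_i$ and $a'+b_2=a_2+\nu M/p_i$, then $a_1-a_2=b_2-b_1$, which contradicts divisor exclusion (Theorem \ref{thm-sands}) unless $a_1=a_2$. This is exactly what Lemma \ref{disj_sig_B} packages (pairwise disjointness of the sets $\Sigma_A(a*F_i)$ over $a\in A\cap\Pi(a_0,p_i^{n_i})$), and it is how the paper argues: one injection gives $|A\cap\Pi(a_0,p_i^{n_i})|\le|A\cap\Pi(a_0+\nu M/p_i,p_i^{n_i})|$, and applying the same inequality starting from a point $a_\nu$ of the target plane (legitimate because $\Pi(a_\nu,p_i^{n_i-1})=\Pi(a_0,p_i^{n_i-1})$, so the hypothesis still applies) gives the converse. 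Two injections between finite sets already force equal cardinalities, so no genuine inverse is needed; with that substitution your argument coincides with the paper's.
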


\begin{proof}
We start by proving that
\begin{equation}\label{localineq}
|A\cap\Pi(a_0,p_i^{n_i})|\leq|A\cap\Pi(a_0+\nu M/p_i,p_i^{n_i})| \text{ for all } \nu=1,\ldots,p_i-1.
\end{equation}	
Assume first that $A\cap\Pi(a_0,p_i^{n_i})=\{a_0\}$. By assumption $a_0+0=a_0$ and $a_0*F_i$ splits with parity $(B,A)$, thus 
$$
|A\cap\Pi(a_0+\nu M/p_i,p_i^{n_i})|\geq 1 \text{ for all } \nu=1,\ldots,p_i-1,
$$
as required. When $A\cap\Pi(a_0,p_i^{n_i})$ is not a singleton, we simply apply Lemma \ref{disj_sig_B} to all pairs $a*F_i,a'*F_i$ with $a,a'\in A\cap\Pi(a_0,p_i^{n_i})$. We get that $\Sigma_A(a*F_i)$ and $\Sigma_A(a'*F_i)$ are disjoint for all $a,a'\in A\cap\Pi(a_0,p_i^{n_i})$, hence (\ref{localineq}) follows.

In order to prove (\ref{distr}), it remains to prove that the converse of (\ref{localineq}) holds for all $\nu\in \{1,\ldots,p_i-1\}$. Fix such $\nu$, and let $a_\nu\in \Pi(a_0+\nu M/p_i,p_i^{n_i})$ (the existence of such an element is guaranteed by (\ref{localineq})). Since $\Pi(a_0,p_i^{n_i-1})= \Pi(a_\nu,p_i^{n_i-1})$, the assumptions of the lemma hold with $a_0$ replaced by $a_\nu$, and the converse inequality follows by applying (\ref{localineq}) to $a_\nu$.

Finally, we note a combinatorial interpretation of prime power cyclotomic divisibility. Suppose that $Y\subset \ZZ_M$ is a set, and that $p^\alpha|M$. Since 
$$\Phi_{p^\alpha}(X)=\Phi_p(X^{p^{\alpha-1}})= 1+X^{p^{\alpha-1}}+X^{2p^{\alpha-1}}+\dots +X^{(p-1)p^{\alpha-1}},
$$
it follows that $\Phi_{p^\alpha}|Y$ if and only if
\begin{equation}\label{cyclocombo}
|Y\cap\Pi(y,p^{\alpha})|= \frac{1}{p}|Y\cap\Pi(y,p^{\alpha-1})|\ \ \forall y\in Y.
\end{equation}
By (\ref{distr}), this holds for the set $A\cap\Pi(a_0,p_i^{n_i-1})$ with $p=p_i$ and $\alpha=n_i$. This proves the last statement in the lemma.
\end{proof}

%\begin{remark}
%A "lower scale" version of Lemma \ref{localdist} can be proved, where the planes in (\ref{distr}) are replaced by planes of lower scale $\Pi(\,\cdot\,, p^{n_i-\alpha-1})$ and the conclusion implies $\Phi_{p_i^{n_i-\alpha}}$ dividing the intersection of $\Pi(\,\cdot\,, p^{n_i-\alpha-1})$ with one of the tiling sets.
%\end{remark}

\begin{corollary}\label{Aunif}{\bf ($A$-uniform splitting implies cyclotomic divisibility)}
Suppose that $0\in B$ and that the tiling $A\oplus B=\ZZ_M$ has $A$-uniform $(B,A)$ splitting parity in the $p_i$ direction. Then $\Phi_{p_i^{n_i}}|A$. 
\end{corollary}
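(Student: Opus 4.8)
\textbf{Proof plan for Corollary \ref{Aunif}.}

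The plan is to deduce Corollary \ref{Aunif} directly from Lemma \ref{localdist} by observing that the hypothesis of $A$-uniform $(B,A)$ splitting parity guarantees that the local hypothesis of Lemma \ref{localdist} is satisfied at \emph{every} point of $A$, not just in a single plane. First I would fix an arbitrary $a_0\in A$. Since every fiber $a*F_i$ with $a\in A$ splits with parity $(B,A)$ by the $A$-uniform splitting assumption, in particular every $a\in A\cap\Pi(a_0,p_i^{n_i-1})$ has the property that $a*F_i$ splits with parity $(B,A)$. Hence Lemma \ref{localdist} applies with this $a_0$, and it yields $\Phi_{p_i^{n_i}}\,|\,(A\cap\Pi(a_0,p_i^{n_i-1}))$.

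Next I would note that $\ZZ_M$ is partitioned into the planes $\Pi(a_0,p_i^{n_i-1})$ as $a_0$ ranges over a set of coset representatives for $p_i^{n_i-1}\ZZ_M$, so that $A$ is partitioned into the sets $A\cap\Pi(a_0,p_i^{n_i-1})$. Each of these pieces is divisible by $\Phi_{p_i^{n_i}}(X)$ as a mask polynomial by the previous paragraph (for the empty pieces this is trivially true). Summing the mask polynomials over the distinct planes, we get $A(X)=\sum A(X)\cap\Pi(\cdot,p_i^{n_i-1})$, and since $\Phi_{p_i^{n_i}}$ divides each summand, it divides the sum; that is, $\Phi_{p_i^{n_i}}\,|\,A$. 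This completes the argument.

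I do not anticipate a serious obstacle here: all the substantive work — the disjointness argument (Lemma \ref{disj_sig_B}), the uniform-distribution estimate (\ref{distr}), and the combinatorial characterization (\ref{cyclocombo}) of prime-power cyclotomic divisibility — has already been carried out in Lemma \ref{localdist}. The only point requiring a moment's care is the passage from ``$\Phi_{p_i^{n_i}}$ divides each plane slice of $A$'' to ``$\Phi_{p_i^{n_i}}$ divides $A$'', which is just the observation that the plane slices partition $A$ and that divisibility by a fixed polynomial is preserved under (finite) sums of mask polynomials; equivalently, one can phrase this directly via the characterization (\ref{cyclocombo}) applied to $A$ itself, since (\ref{distr}) holding around every $a_0\in A$ is exactly the statement that $|A\cap\Pi(y,p_i^{n_i})|=\frac1{p_i}|A\cap\Pi(y,p_i^{n_i-1})|$ for all $y\in A$.
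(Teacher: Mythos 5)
Your proposal is correct and follows essentially the same route as the paper: the paper likewise applies Lemma \ref{localdist} at every $a_0\in A$ and then invokes the characterization (\ref{cyclocombo}) to pass from divisibility of each plane slice $A\cap\Pi(a_0,p_i^{n_i-1})$ to divisibility of $A$ itself. Your extra remark about summing mask polynomials over the partition is just an equivalent phrasing of that last step.
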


\begin{proof}
By (\ref{cyclocombo}), $\Phi_{p_i^{n_i}}|A$ if and only if $\Phi_{p_i^{n_i}}|A\cap \Pi(a,p_i^{n_i-1})$ for every $a\in A$. This together with Lemma \ref{localdist} implies the corollary.
\end{proof}

We do not know whether the converse of Corollary \ref{Aunif} holds, i.e. whether $\Phi_{p_i^{n_i}}|A$ implies that the tiling $A\oplus B=\ZZ_M$ has $A$-uniform $(B,A)$ splitting parity in the $p_i$ direction.

%%%%%%%%%%%%%%%%%%%%%%%%%%%%%%%%%

\section{Tiling reductions}\label{tiling-reductions}

%%%%%%%%%%%%%%%%%%%%%%%%%%%%%%%%%%%%%

The slab reduction in Theorem \ref{subtile} and Corollary \ref{slab-reduction} was introduced in \cite{LaLo1} and used in the proof of (T2) for 3 prime factors in \cite{LaLo2}.

\begin{theorem}\label{subtile}\cite[Theorem 6.5]{LaLo1}
Let $M=p_1^{n_1}\dots p_K^{n_K}$.
Assume that $A\oplus B=\ZZ_M$, and that $\Phi_{p_i^{n_i}}|A$ for some $i\in\{1,\dots,K\}$.
Define
$$
%\begin{equation}\label{Asubtile}
A_{p_i}=\{a\in A:\ 0\leq\pi_i(a)\leq p_i^{n_i-1}-1\}.
%\end{equation}
$$
Then the following are equivalent:

\begin{enumerate}
\item [(i)]  For any translate $A'$ of $A$, we have $A'_{p_i}\oplus B=\ZZ_{M/p_i}$.
 
\item [(ii)] For every $m$ such that $p_i^{n_i}|m|M$, we have
	\begin{equation}\label{slabcond}
		m\in \Div(A) \Rightarrow m/p_i\notin \Div(B).
	\end{equation}

\item[(iii)] For every $d$ such that $p_i^{n_i}|d|M$, at least one of the following holds:
$$
%\begin{equation}\label{ts-e2}
\Phi_d|A,
%\end{equation}
$$
$$
%\begin{equation}\label{ts-e1}
\Phi_{d/p_i}\Phi_{d/p_i^2}\dots \Phi_{d/p_i^{n_i}}\mid B.
%\end{equation}
$$
\end{enumerate}
\end{theorem}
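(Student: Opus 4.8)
The plan is to prove the cycle of implications $(i)\Rightarrow(ii)\Rightarrow(iii)\Rightarrow(i)$, translating each condition into the language of divisors and cyclotomic divisibility as needed. Throughout, write $n=n_i$, $p=p_i$, and let $\pi_i$ denote the $i$-th coordinate projection $\ZZ_M\to\ZZ_{p^{n}}$. Note first that because $\Phi_{p^{n}}\mid A$, the set $A$ is fibered in the $p_i$ direction: by the combinatorial criterion (\ref{cyclocombo}), every fiber $a*F_i$ with $a\in A$ is entirely contained in $A$, so $A=A_{p_i}*F_i$ where $A_{p_i}$ is as defined in the statement, and consequently $|A|=p\,|A_{p_i}|$ and $|A_{p_i}|\,|B|=M/p$. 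This reduces $(i)$ to a divisor-exclusion statement for $A_{p_i}$ and $B$ inside $\ZZ_{M/p}$.

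For $(i)\Rightarrow(ii)$: since every translate $A'$ of $A$ is again fibered in the $p_i$ direction with $A'_{p_i}$ a translate of $A_{p_i}$, condition $(i)$ says $A_{p_i}\oplus B=\ZZ_{M/p}$, hence by Theorem \ref{thm-sands} applied in $\ZZ_{M/p}$ we have $\Div(A_{p_i})\cap\Div(B)=\{M/p\}$ where gcd's are now taken with $M/p$. The key computation is to relate $\Div(A_{p_i})$ (as a subset of divisors of $M/p$) to $\Div(A)$: for $a,a'\in A_{p_i}$ one checks that $(a-a',M/p)$ and $(a-a',M)$ agree on all prime-power parts except possibly the $p$-part, and for the $p$-part, since $0\le\pi_i(a),\pi_i(a')\le p^{n-1}-1$ we have $p^{n-1}\nmid a-a'$ unless $\pi_i(a)=\pi_i(a')$. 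Using this, if $p^{n}\mid m\mid M$ lies in $\Div(A)$, then by fiberedness $m$ comes from a pair in $A_{p_i}*F_i$, and one deduces $m/p^{?}\in\Div(A_{p_i})$ appropriately; conversely if $m/p\in\Div(B)$ then (since $p^{n-1}\parallel m/p$ and $m/p\mid M/p$) $m/p\in\Div(B)$ as a divisor of $M/p$ too, and divisor exclusion in $\ZZ_{M/p}$ is violated. This bookkeeping about which power of $p$ survives is the part requiring care.

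For $(ii)\Leftrightarrow(iii)$: fix $d$ with $p^{n}\mid d\mid M$. By Sands' theorem applied to $A\oplus B=\ZZ_M$, for each divisor $m$ of $M$ exactly one of $m\in\Div(A)$, $m\in\Div(B)$ holds (for $m\ne M$), which in cyclotomic terms says exactly one of $\Phi_{m}\mid A$, $\Phi_{m}\mid B$ — more precisely $m\in\Div(A)$ iff $\Phi_{m}\nmid A$ is false... one must be slightly careful here and instead use the standard dictionary $m\notin\Div(A)\iff\Phi_{d}\mid A$ for all $d$ with $d\mid M$, $d\nmid m$; the cleanest route is: $m\in\Div(A)$ iff there is a prime-power divisor $d$ of $M$ with $d\mid m$ and $\Phi_{d}\nmid A$. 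Applying (\ref{slabcond}) with $m=d,d/p,\dots$ and with $m$ running over the divisors $d,d/p,\dots,d/p^{n}$ (these are exactly the divisors of $M$ with the same prime-power parts as $d$ outside $p$ and with $p$-part ranging over $p^{n},\dots,p^{0}$), one rewrites ``$d\in\Div(A)\Rightarrow d/p\notin\Div(B)$'' and its consequences as the dichotomy in $(iii)$: either $\Phi_d\mid A$, or else $d\in\Div(A)$, forcing $d/p\notin\Div(B)$, i.e. $\Phi_{d/p}\mid B$, and then iterating down the chain $d/p,d/p^2,\dots$ forces $\Phi_{d/p^{j}}\mid B$ for all $1\le j\le n$. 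Conversely $(iii)$ directly implies (\ref{slabcond}) for each such $m$. The main obstacle here is setting up the dictionary between $\Div$ and cyclotomic divisibility precisely enough that the chain of implications $d/p\notin\Div(B)\Rightarrow d/p^2\notin\Div(B)\Rightarrow\cdots$ is valid.

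For $(iii)\Rightarrow(i)$: it suffices to show $A_{p_i}\oplus B=\ZZ_{M/p}$, equivalently (Sands, in $\ZZ_{M/p}$) that $|A_{p_i}|\,|B|=M/p$ — already noted — and $\Div(A_{p_i})\cap\Div(B)=\{M/p\}$ in $\ZZ_{M/p}$. Suppose $e\mid M/p$, $e\ne M/p$, lies in both. Pull $e$ back to the divisor-of-$M$ setting: write $M/p=p^{n-1}\cdot M_i$ and split $e$ according to its $p$-part $p^{s}$, $0\le s\le n-1$. Since $A$ is fibered in the $p_i$ direction, $e\in\Div(A_{p_i})$ forces the divisor $d:=p^{n}\cdot(e/p^{s})$ (the ``fiber closure'' of $e$, with $p$-part boosted to $p^{n}$) to satisfy $d\in\Div(A)$, hence by $(iii)$ $\Phi_{d/p^{j}}\mid B$ for $j=1,\dots,n$; in particular taking $j=n-s$ gives $\Phi_{e}\mid B$, which by the Sands dictionary contradicts $e\in\Div(B)$. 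Thus no such $e$ exists and $(i)$ holds. Assembling the three implications completes the proof. I expect the technical heart of the argument to be the repeated, careful translation between membership in $\Div(\cdot)$, prime-power cyclotomic divisibility via (\ref{cyclocombo}), and the passage between $\ZZ_M$ and $\ZZ_{M/p}$ — none of it deep, but each step must track the $p$-adic valuation exactly.
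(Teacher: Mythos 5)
This theorem is not proved in the present paper; it is quoted from \cite[Theorem 6.5]{LaLo1}, so your proposal has to stand on its own. It does not: there are two genuine gaps. First, your foundational claim that $\Phi_{p_i^{n_i}}\mid A$ forces $A$ to be fibered in the $p_i$ direction (so that $A=A_{p_i}*F_i$) is false. The criterion (\ref{cyclocombo}) only gives \emph{equidistribution} of $A$ among the $p_i$ planes $\Pi(a+\nu M/p_i,p_i^{n_i})$ inside each $\Pi(a,p_i^{n_i-1})$; it does not make $A$ a union of full fibers. For a concrete counterexample, take $M=12$, $p_i=2$, $n_i=2$, $A=\{0,2\}$, $B=\{0,1,4,5,8,9\}$: this is a tiling, $A(X)=\Phi_4(X)$, yet $0+6\notin A$. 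This error propagates: it is exactly because $A'_{p_i}$ genuinely depends on the translate $A'$ that condition (i) quantifies over all translates (your claim that $A'_{p_i}$ is always a translate of $A_{p_i}$ would make that quantifier vacuous), and your ``fiber closure'' step in (iii)$\Rightarrow$(i), which boosts $e\in\Div(A_{p_i})$ to $p_i^{n_i}(e/p_i^s)\in\Div(A)$, has no justification without fiberedness. The (i)$\Leftrightarrow$(ii) direction can be repaired without fibering --- given $a,a'\in A$ with $p_i^{n_i}\mid a-a'$, choose the translate putting both into $A'_{p_i}$ and compare $(a-a',M)$ with $(a-a',M/p_i)$ --- but that is not what you wrote.

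Second, the ``Sands dictionary'' on which your entire (ii)$\Leftrightarrow$(iii) argument rests is wrong. Theorem \ref{thm-sands} gives $\Div(A)\cap\Div(B)=\{M\}$, i.e.\ disjointness, not ``exactly one of $m\in\Div(A)$, $m\in\Div(B)$ holds''; and neither of the implications you actually use --- ``$d/p_i\notin\Div(B)\Rightarrow\Phi_{d/p_i}\mid B$'' and ``$\Phi_d\nmid A\Rightarrow d\in\Div(A)$'' --- is valid (take $B$ or $A$ to be a singleton, or $A=\{0,2\}\subset\ZZ_4$ with $m=1$). Membership of $m$ in $\Div(A)$ is a statement about a single pair of elements, while $\Phi_d\mid A$ is a global linear condition on $A$ modulo $d$; there is no pointwise equivalence between them, and bridging the two is precisely the nontrivial content of the (ii)$\Leftrightarrow$(iii) equivalence in \cite{LaLo1} (which argues through the structure of $A$ and $B$ modulo the relevant divisors rather than through such a dictionary). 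As written, neither implication between (ii) and (iii) is established, and since (iii)$\Rightarrow$(i) also relies on the false fibering claim, no complete implication cycle survives.
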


If a tiling $A\oplus B=\ZZ_M$ satisfies the conditions of Theorem \ref{subtile}, we can reduce proving (T2) for $A$ and $B$ to proving it for the tilings $A'_{p_i}\oplus B=\ZZ_{M/p_i}$ defined above \cite[Corollary 6.7]{LaLo1}.
Corollary \ref{slab-reduction} below follows by iterating this procedure until (T2) is known to hold.

\begin{corollary}\label{slab-reduction} {\bf (Slab reduction)} 
Let $M_0\in\NN$. Assume that for any $M|M_0$, and for any tiling $A\oplus B=\ZZ_M$, at least one of the following holds:
\begin{itemize}
\item[(i)] both $A$ and $B$ satisfy (T2),
\item[(ii)] $A$ and $B$ obey the conditions of Theorem \ref{subtile} for some $p_i|M$, 
\item[(iii)] the conditions of Theorem \ref{subtile} hold with $A$ and $B$ interchanged for some $p_i|M$. 
\end{itemize}
Then in any tiling $A_0\oplus B_0=\ZZ_{M_0}$, both $A_0$ and $B_0$ satisfy (T2).
\end{corollary}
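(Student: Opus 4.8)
The plan is to argue by strong induction on the number of prime factors of $M_0$ counted with multiplicity, i.e. on $\Omega(M_0)=\sum_i n_i$ where $M_0=\prod p_i^{n_i}$. The base case is when $M_0=1$ (or more conveniently when $M_0$ is prime), where the only tilings are trivial and (T2) holds vacuously; one can also fold the base case into the hypothesis, since case (i) must occur whenever $\Omega(M)$ is too small for the conditions in (ii)/(iii) to apply nontrivially. For the inductive step, fix a tiling $A_0\oplus B_0=\ZZ_{M_0}$ and apply the trichotomy hypothesis to $M=M_0$. If alternative (i) holds we are done immediately. So assume (ii) holds (the case (iii) being identical with the roles of $A_0$ and $B_0$ exchanged): $A_0$ and $B_0$ obey the conditions of Theorem \ref{subtile} for some prime $p_i\mid M_0$. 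In particular, by the equivalence (i)$\Leftrightarrow$(ii) in Theorem \ref{subtile}, we have $(A_0)_{p_i}\oplus B_0=\ZZ_{M_0/p_i}$, and the same holds for every translate of $A_0$.

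Next I would invoke the reduction statement \cite[Corollary 6.7]{LaLo1}, cited in the paragraph following Theorem \ref{subtile}: proving (T2) for $A_0$ and $B_0$ in the tiling $A_0\oplus B_0=\ZZ_{M_0}$ reduces to proving (T2) for $A'_{p_i}$ and $B_0$ in the tilings $A'_{p_i}\oplus B_0=\ZZ_{M_0/p_i}$, where $A'$ ranges over the translates of $A_0$. Since $M_0/p_i$ divides $M_0$, the hypothesis of the corollary applies verbatim with $M_0$ replaced by $M_0/p_i$: for every $M\mid M_0/p_i$ and every tiling over $\ZZ_M$, one of (i), (ii), (iii) holds. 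Moreover $\Omega(M_0/p_i)=\Omega(M_0)-1$, so the inductive hypothesis applies and yields (T2) for both tiles in every tiling $A'_{p_i}\oplus B_0=\ZZ_{M_0/p_i}$. Feeding this back through \cite[Corollary 6.7]{LaLo1} gives (T2) for $A_0$ and $B_0$, completing the induction.

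The one point that needs care — and is the main thing to get right rather than a genuine obstacle — is the precise content of \cite[Corollary 6.7]{LaLo1}: it must say not merely that (T2) for the reduced tiles implies (T2) for the original ones, but that this holds when one knows (T2) for \emph{all} translates $A'_{p_i}$, and that property (T2) for $A_0$ is genuinely recovered (not just for some translate). This is exactly why condition (i) of Theorem \ref{subtile} is stated for every translate $A'$ of $A$, and why the trichotomy hypothesis of the present corollary is imposed for all $M\mid M_0$ (so that it descends to $M_0/p_i$ and all its divisors, keeping the induction closed). Since (T2) is a translation-invariant property of a set, once we have it for each $A'_{p_i}$ we have it for the reduced data in full, and the cited corollary transfers it upward. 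No new combinatorics is required here beyond Theorem \ref{subtile} and its corollary from \cite{LaLo1}; the whole argument is the bookkeeping of an induction on $\Omega(M_0)$.
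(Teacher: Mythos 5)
Your proposal is correct and follows essentially the same route as the paper: the paper's own justification is precisely to apply the reduction of \cite[Corollary 6.7]{LaLo1} whenever alternative (ii) or (iii) holds and to ``iterate until (T2) is known,'' which is exactly your induction on the number of prime factors of $M_0$ counted with multiplicity. Your added care about the hypothesis descending to all divisors $M\mid M_0/p_i$ and about (T2) being needed for all translates $A'_{p_i}$ is the right bookkeeping and matches the intended argument.
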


This section aims to connect the slab reduction with uniform splitting parity (Definition \ref{uniform-splitting}) in a fixed direction. 
This relationship is made precise in Lemma \ref{splittingslab}. 
Additionally, Theorem \ref{splittingslab} below establishes a direct link between the slab reduction and \cite[Conjecture 9.4]{LaLo1}, while Corollary \ref{slabcor} (i) may be viewed as a step towards resolving Conjecture 9.2 in \cite{LaLo1}.

Clearly, if $(A,B)$ satisfy (\ref{slabcond}) then the same holds for $(A,rB)$ for all $r\in R$. Thus $(A,B)$ satisfy the slab reduction conditions if and only if $(A,rB)$ satisfy the slab reduction conditions for any $r\in R$.

\begin{definition}
Let $A\oplus B=\ZZ_M$.  
Given $a\in A$ and $b\in B$, 
we say that the product $\langle\bbA[a*F_i],\bbB[b]\rangle$ {\em splits with parity $(A,B)$} if $A_{x,b}\subset \Pi(a,p_i^{n_i})$ for all $x\in a*F_i$, and {\em with parity $(B,A)$} if $A_{x,b}\subset \Pi(x,p_i^{n_i})$ for all $x\in a*F_i$.
\end{definition}

\begin{lemma}\label{splittingslab}
Let $A\oplus B=\ZZ_M$. %Suppose that $\Phi_{p_i^{n_i}}|A$. 
Then the following are equivalent:
\begin{enumerate}
\item [(I)] $(A,B)$ satisfy the slab reduction conditions in Theorem \ref{subtile}.
\item [(II)] The tiling $A\oplus rB=\ZZ_M$
has uniform $(rB,A)$ splitting parity in the $p_i$ direction, for every $r\in R$.
\item [(III)] For every $a\in A$ and $b\in B$, the product $\langle\bbA[a*F_i],\bbB[b]\rangle$ splits with parity $(B,A)$.

\end{enumerate}
\end{lemma}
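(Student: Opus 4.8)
The plan is to prove the three implications $(\mathrm{I})\Rightarrow(\mathrm{II})\Rightarrow(\mathrm{III})\Rightarrow(\mathrm{I})$, using the dictionary between divisor conditions, box products, and splitting that has been set up above. First I would unwind the definitions on the level of a single fiber. Fix $a\in A$, $b\in B$, and set $x\in a*F_i$. By Lemma \ref{divallign} (applied after translating so that $0\in A\cap B$, which is harmless by Lemma \ref{translate-splitting}), an element $a'\in A$ lies in $A_{x,b}$ precisely when $(a'-x,M)=(b-b',M)$ for some $b'\in B$, i.e. precisely when $a'+rb=x'$ for some $r\in R$ and some $x'\in a*F_i$. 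The content of ``$\langle\bbA[a*F_i],\bbB[b]\rangle$ splits with parity $(B,A)$'' is that every such $a'$ satisfies $p_i^{n_i}\mid a'-x$, which via Lemma \ref{no-upgrades} is exactly the statement that the fiber through $x$ splits with parity $(B,A)$ in the tiling $A\oplus rB$ — so (III) is a ``pointwise in $a,b$'' repackaging of (II). The main thing to check in the direction $(\mathrm{II})\Leftrightarrow(\mathrm{III})$ is a counting/covering argument: as $r$ ranges over $R$ and $b$ over $B$, the pairs $(r,rb)$ realize every element of $B$ as the ``$B$-side'' of some fiber tiling, so $A$-fibers versus arbitrary fibers in the definition of uniform parity (Definition \ref{uniform-splitting}(i) vs.\ the $A$-uniform variant) coincide here; Lemma \ref{disj_sig_B} and Lemma \ref{localdist} supply the combinatorial bookkeeping that promotes ``$A$-fibers split'' to ``all fibers split''.

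For $(\mathrm{I})\Rightarrow(\mathrm{II})$ I would argue by contraposition using divisor exclusion. Suppose some fiber $Z=z*F_i$ in the tiling $A\oplus rB$ splits with parity $(A,rB)$ rather than $(rB,A)$; by Lemma \ref{no-upgrades} this means there are $a,a'\in\Sigma_A(Z)$ with $p_i^{n_i-1}\parallel a-a'$, hence (as in the proof of Lemma \ref{no-upgrades}) there are $b,b'\in\Sigma_{rB}(Z)$ with $p_i^{n_i}\mid b-b'$. Tracking the divisor $(a-b_{\mathrm{match}},M)$ along the fiber produces a value $m$ with $p_i^{n_i}\mid m\mid M$, $m\in\Div(A)$, and $m/p_i\in\Div(rB)=\Div(B)$ (the last equality because divisibility by $M$ is invariant under dilation by $r\in R$), contradicting (\ref{slabcond}). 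Conversely, for $(\mathrm{III})\Rightarrow(\mathrm{I})$, I would take an $m$ with $p_i^{n_i}\mid m\mid M$ and $m\in\Div(A)$, realize it as $m=(a_1-a_2,M)$, and use the already-established equivalence with (II) together with Lemma \ref{no-upgrades} (splitting with parity $(B,A)$ forces $\Sigma_B$ to have all pairwise divisors exactly $p_i^{n_i-1}$-divisible, hence never $m/p_i$-divisible when $p_i^{n_i}\mid m$) to conclude that $m/p_i\notin\Div(B)$, which is (\ref{slabcond}), equivalently condition (ii) of Theorem \ref{subtile}.

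The step I expect to be the main obstacle is the quantifier juggling in $(\mathrm{II})\Leftrightarrow(\mathrm{III})$: one must be careful that ``for every $r\in R$, \emph{all} fibers of $A\oplus rB$ split with parity $(rB,A)$'' really is equivalent to the seemingly weaker ``for every $a\in A$, $b\in B$, the product $\langle\bbA[a*F_i],\bbB[b]\rangle$ splits with parity $(B,A)$,'' and not just to an $A$-uniform version. The resolution is that (III) already ranges over all $b\in B$ (not just those of a fixed divisor class), and by Lemma \ref{basic-r}/Lemma \ref{satsets-to-dilations} the pair $(r,b)$ encodes exactly the data of ``which element $-rb$ of the fiber $\Sigma$ is hit'', so every fiber of every dilated tiling is accounted for by some $(a,b)$ with $a$ the $A$-element tiling the base point; the $A$-uniform-to-uniform upgrade is then handled exactly as in Lemma \ref{localdist} (disjointness of the $\Sigma_A(a*F_i)$ via Lemma \ref{disj_sig_B}, then a cardinality count filling the plane $\Pi(a_0,p_i^{n_i-1})$). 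I would also double-check the harmless-normalization reductions ($0\in A\cap B$, invariance of $\Div(\cdot)\bmod M$ under $R$-dilations, and translation invariance from Lemma \ref{translate-splitting}) are invoked explicitly, since the definition of splitting of a product is stated for fibers anchored at elements of $A$.
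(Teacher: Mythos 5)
Your overall architecture is sound and close to the paper's: normalize by translation, use Lemma \ref{divallign} as the bridge between saturating sets, dilations and fibers, and contradict (\ref{slabcond}). Your route for (I)$\Rightarrow$(II) (tracking divisors along a badly split fiber and using $\Div(rB)=\Div(B)$) is essentially the paper's (II)$\Rightarrow$(I) argument run backwards, which is legitimate; the paper instead gets (I)$\Rightarrow$(II) from condition (i) of Theorem \ref{subtile}, by showing that a fiber splitting with parity $(A,rB)$ forces $p_i$ distinct sums from $A_{p_i}\oplus rB$ to collide modulo $M/p_i$. Either route works.

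There is, however, a concrete error: you have inverted the parity convention of Definition \ref{def-splitting}. Splitting with parity $(A,rB)$ means $p_i^{n_i}\mid a-a'$ for $a,a'\in\Sigma_A(Z)$ (the \emph{first-listed} set is the clustered one) and $p_i^{n_i-1}\parallel b-b'$ for distinct $b,b'\in\Sigma_{rB}(Z)$ --- the opposite of what you wrote. This is not cosmetic: with your version the divisor tracking yields $m'\in\Div(A)$ with $p_i^{n_i-1}\parallel m'$ and $p_im'\in\Div(B)$, which does \emph{not} contradict (\ref{slabcond}), since condition (ii) of Theorem \ref{subtile} is asymmetric in $A$ and $B$ and only forbids $m\in\Div(A)$, $m/p_i\in\Div(B)$ with $p_i^{n_i}\mid m$. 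The same inversion appears in your sketch of (III)$\Rightarrow$(I) (parity $(B,A)$ forces $p_i^{n_i}\mid b-b'$ on $\Sigma_B$, not exact $p_i^{n_i-1}$-divisibility). With the correct convention both steps do go through: parity $(A,rB)$ with $a\neq a'$ gives $(a-a',M)=m$, $p_i^{n_i}\mid m$, and $(b-b',M)=m/p_i\in\Div(B)$; you should also dispose of the degenerate case $\lvert\Sigma_A(Z)\rvert=1$, where $m=M$ and $M/p_i\in\Div(B)$ already violate (\ref{slabcond}). Separately, your appeal to Lemmas \ref{disj_sig_B} and \ref{localdist} for the (II)$\Leftrightarrow$(III) quantifier issue is a detour: since $A\oplus rB=\ZZ_M$ is a tiling, every fiber $z*F_i$ passes through some $a+rb$, so (III) --- which quantifies over all $a\in A$, $b\in B$ and all $x\in a*F_i$ --- already accounts for every fiber of every dilated tiling via Lemma \ref{divallign}; no $A$-uniform-to-uniform upgrade is needed, and those two lemmas (about cardinality counts and cyclotomic divisibility) are not the relevant tools here.
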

\begin{proof}
We start by noting that the properties (I)--(III) are all translation-invariant. This is clear for (I) and (III); for (II), we use that $r(B-c)=rB-rc$ and that uniform splitting parity is invariant under translations. The same is true of the conditions in Theorem \ref{subtile}.

We first prove that (I) and (II) are equivalent.
Suppose that $(A,B)$ satisfy the slab reduction conditions. Note that the condition (ii) of Theorem \ref{subtile} depends only on the sets $\Div(A)$ and $\Div(B)$. Since $\Div(B)=\Div(rB)$ for $r\in R$, it follows that $(A,rB)$ must also satisfy the slab reduction conditions for such $r$.
Assume by contradiction that there exist $z\in\ZZ_M$ and $r\in R$ such that $z*F_i$ splits with parity $(A,rB)$ in the tiling $A\oplus rB=\ZZ_M$.
Let $a\in A,b\in B$ satisfy $z=a+rb$.
Without loss of generality, we may assume that $a\in A_{p_i}$. We prove that $(A_{p_i},rB)$ do not tile $\ZZ_{M/p_i}$. 

By the parity assumption, there exist 
$$
a_1,\ldots, a_{p_i-1}\in A\cap \Pi(a,p_i^{n_i})\subset A_{p_i}
$$
and 
$$
b_1,\ldots, b_{p_i-1}\in B\cap (\Pi(b,p_i^{n_i-1})\setminus\Pi(b,p_i^{n_i}))
$$ 
such that $a_\mu+rb_\mu=z+\mu M/p_i, \mu=1,\ldots, p_i-1$. It follows that $(a+rb,M/p_i)=(a_\mu+rb_\mu,M/p_i)$ for all $\mu\in \{1,\ldots,p_i-1\}$, which is a contradiction, since all sums $a+rb$ with $a\in A_{p_i}$ and $b\in B$ must be distinct modulo $M/p_i$.

For the other direction, assume that (II) holds. 
It follows from Corollary \ref{Aunif} that $\Phi_{p_i^{n_i}}|A$. 
Assume by contradiction that $(A,B)$ do not satisfy the slab reduction conditions (i)-(iii). Then 
(\ref{slabcond}) must fail, so that there exist $p_i^{n_i}|m|M$ such that $m\in \Div(A)$ and $m/p_i\in \Div(B)$. By translational invariance, we may assume that
\begin{equation}\label{zerocommon}
0\in A\cap B
\end{equation}
and that there exist $a\in A, b\in B$ and $\mu \in \{1,\ldots, p_i-1\}$ satisfying
$$
(a,M)=(b-\mu M/p_i,M)=m.
$$
By Lemma \ref{divallign}, there exists $r\in R$ such that $r(b-\mu M/p_i ) =-a$, so that $a+rb=r\mu M/p_i\in F_i$. By (\ref{zerocommon}), the latter implies that $F_i$ splits with parity $(A,rB)$, contradicting (II).

Next, we prove that (II)  and (III) are equivalent. We show that (II) implies (III), but the argument is completely reversible. Indeed, suppose that (II) holds yet (III) fails. Without loss of generality, we may assume that $0\in A\cap B$ and that (III) fails with $a=b=0$ and $x=M/p_i$, so that  $A_{M/p_i,0}\cap \Pi(0,p_i^{n_i})\neq \emptyset$. This implies that there exist $p_i^{n_i}|m|M$ and 
\begin{equation}\label{reverse}
	a\in A\cap \Pi(0,p_i^{n_i}), b\in B\cap \Pi(M/p_i,p_i^{n_i})
\end{equation}
satisfying 
$$
(a,M)=(b-M/p_i,M)=m.
$$
By Lemma \ref{divallign}, there exists $r\in R$ such that $r(b-M/p_i)=-a$, so that $a+rb=\mu M/p_i$ for some $\mu \in \{1,\ldots p_i-1\}$. By (\ref{reverse}), this means that the tiling pair $(A,rB)$ does not have uniform splitting in the $p_i$ direction with parity $(rB,A)$, contradicting (II).
\end{proof}

\begin{corollary}\label{slabcor}
Let $A\oplus B=\ZZ_M$ be a tiling. %where $M=p_1^{n_1}\dots p_K^{n_K}$
Assume that 
at least one of the following holds for some $i\in\{1,\dots,K\}$:
\begin{itemize}
\item[(i)] $\bbA_{M/p_i}[a]>0$ for every $a\in A$,
\item[(ii)] $\Phi_{p_i^{n_i}}|A$, and for every $b\in B$ we have 
\begin{equation}\label{maxplanebound}
|B\cap\Pi(b,p_i^{n_i})|=|B|/(|B|,p_i^{n_i}).
\end{equation}
\end{itemize}
Then $A$ satisfies the conditions of Theorem \ref{subtile}.
\end{corollary}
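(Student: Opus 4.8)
The plan is to deduce everything from Lemma~\ref{splittingslab}: it suffices to show that $(A,B)$ satisfy the slab reduction conditions of Theorem~\ref{subtile}, and by the equivalence of (I) and (II) there this amounts to proving that for every $r\in R$ the tiling $A\oplus rB=\ZZ_M$ has uniform $(rB,A)$ splitting parity in the $p_i$ direction, i.e.\ (by Lemma~\ref{no-upgrades}) that \emph{no} fiber in the $p_i$ direction splits with parity $(A,rB)$. Both hypotheses (i) and (ii) are invariant under replacing $B$ by $rB$ for $r\in R$ (since $\Div(rB)=\Div(B)$, $(|rB|,p_i^{n_i})=(|B|,p_i^{n_i})$, and dilation by a unit permutes the lines $\Pi(\cdot,p_i^{n_i})$), so it is enough to argue for $r=1$. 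Finally $\Phi_{p_i^{n_i}}\mid A$ is part of the conclusion ``$A$ satisfies the conditions of Theorem~\ref{subtile}'': in case (ii) it is hypothesized, and in case (i) it follows once (II) is known, via Corollary~\ref{Aunif} applied after translating so that $0\in B$.

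\emph{Case (i).} Suppose, for contradiction, that a fiber $Z=z*F_i=\{z_0,\dots,z_{p_i-1}\}$ splits with parity $(A,B)$, and write $z_\nu=a_\nu+b_\nu$ with $a_\nu\in A$, $b_\nu\in B$. By Definition~\ref{def-splitting}(i), $p_i^{n_i}\mid a_\nu-a_{\nu'}$ for all $\nu,\nu'$. By hypothesis (i) there is $a^*\in A$ with $(a_0-a^*,M)=M/p_i=p_i^{n_i-1}M_i$; this forces $M_i\mid a^*-a_0$ and $v_{p_i}(a^*-a_0)=n_i-1$, so $a^*-a_0$ is a nonzero element of $F_i$ and hence $a^*+b_0=z_0+(a^*-a_0)\in z_0*F_i=Z$, say $a^*+b_0=z_\sigma$ with $\sigma\neq 0$ (as $a^*\neq a_0$). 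Uniqueness of the representation $z_\sigma=a_\sigma+b_\sigma$ gives $a_\sigma=a^*$, so $v_{p_i}(a_\sigma-a_0)=n_i-1<n_i$, contradicting $p_i^{n_i}\mid a_\sigma-a_0$. (The same computation with $b_\nu$ replaced by $rb_\nu$ handles any $r\in R$, which is what the first paragraph requires.) Thus no fiber splits with parity $(A,rB)$, establishing (II), and the case is complete. An alternative, equally short route for this case verifies condition (ii) of Theorem~\ref{subtile} directly: if $p_i^{n_i}\mid m\mid M$ and $(a-a',M)=m$ with $a,a'\in A$, then picking a fiber partner $a^*$ of $a$ as above gives $(a^*-a',M)=m/p_i$, so $m/p_i\in\Div(A)$; if also $m/p_i\in\Div(B)$ this contradicts $\Div(A)\cap\Div(B)=\{M\}$ (Theorem~\ref{thm-sands}), since $m/p_i\neq M$.

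\emph{Case (ii).} By the reduction it suffices to show no fiber splits with parity $(A,B)$. If one did, the computation in the proof of Lemma~\ref{no-upgrades} gives $p_i^{n_i}\mid a_\nu-a_{\nu'}$ and then $b_\nu-b_{\nu'}=(z_\nu-z_{\nu'})-(a_\nu-a_{\nu'})$ has $v_{p_i}=n_i-1$ for $\nu\neq\nu'$; in particular the $b_\nu$ are pairwise distinct and their images under $\pi_i\colon\ZZ_M\to\ZZ_{p_i^{n_i}}$ are $p_i$ elements pairwise differing by a nonzero element of the order-$p_i$ subgroup $H:=p_i^{n_i-1}\ZZ_{p_i^{n_i}}$, i.e.\ $\pi_i(B)$ contains a full coset of $H$. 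The plan is to rule this out using the push-forward of the tiling: writing $\ZZ_M\cong\ZZ_{p_i^{n_i}}\times\ZZ_{M_i}$, the projections $P_A,P_B$ of $A,B$ to $\ZZ_{p_i^{n_i}}$ satisfy $P_A(X)P_B(X)=M_i\,\tfrac{X^{p_i^{n_i}}-1}{X-1}$ in $\ZZ[X]/(X^{p_i^{n_i}}-1)$; hypothesis (\ref{maxplanebound}) says $P_B=c_B\,Q$ with $Q$ the $0$–$1$ indicator of $\pi_i(B)$ and $c_B=|B|/(|B|,p_i^{n_i})$; and $\Phi_{p_i^{n_i}}\mid A$ gives $\Phi_{p_i^{n_i}}\mid P_A$, hence (as $p_i\nmid M_i=Q$'s complementary constant and $\tfrac{X^{p_i^{n_i}}-1}{X-1}$ is cyclotomically squarefree) $\Phi_{p_i^{n_i}}\nmid Q$. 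Writing $P_A=\Phi_{p_i^{n_i}}(X)\widetilde P(X)$ with $\widetilde P$ a nonnegative polynomial of degree $<p_i^{n_i-1}$ and using $\Phi_{p_i^{n_i}}(X)^2\equiv p_i\,\Phi_{p_i^{n_i}}(X)\pmod{X^{p_i^{n_i}}-1}$, one cancels the top cyclotomic factor and reduces the identity modulo $X^{p_i^{n_i-1}}-1$, which pins down $Q$ tightly enough to force its reduction there to remain $0$–$1$ — equivalently, that the composite $\ZZ_M\xrightarrow{\pi_i}\ZZ_{p_i^{n_i}}\to\ZZ_{p_i^{n_i-1}}$ is injective on $\pi_i(B)$, contradicting that $\pi_i(B)$ contains a full $H$-coset.

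The short self-contained part is Case (i). The main obstacle is closing Case (ii): one must show that the distribution hypothesis (\ref{maxplanebound}) together with $\Phi_{p_i^{n_i}}\mid A$ forces $\Div(B)$ to contain no divisor of $p_i$-valuation exactly $n_i-1$ (equivalently, $\pi_i(B)$ to be ``non-degenerate'' in $\ZZ_{p_i^{n_i}}\to\ZZ_{p_i^{n_i-1}}$). The delicate point is that dividing by $\Phi_{p_i^{n_i}}$ in $\ZZ[X]/(X^{p_i^{n_i}}-1)$ is not cancellative, so the cancellation step above — carried out through the genuine polynomial identity obtained from the projected tiling — has to be combined with careful bookkeeping of the nonnegativity (and integrality) of the coefficients of $\widetilde P$ and $Q$; I expect this, rather than anything in Case (i), to be where the real work lies.
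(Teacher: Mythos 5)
Your Case (i) is correct and is essentially the paper's argument: hypothesis (i) gives every $a_0\in\Sigma_A(Z)$ a partner $a^*\in A$ with $(a_0-a^*,M)=M/p_i$, which forces $a^*\in\Sigma_A(Z)$ and kills the $(A,rB)$ splitting parity, so condition (II) of Lemma \ref{splittingslab} holds; your alternative direct verification of condition (ii) of Theorem \ref{subtile} via $m\in\Div(A)\Rightarrow m/p_i\in\Div(A)$ and divisor exclusion is also valid. You have also correctly identified the target statement in Case (ii): one must show that (\ref{maxplanebound}) together with $\Phi_{p_i^{n_i}}\mid A$ forces $B\cap\Pi(b,p_i^{n_i-1})=B\cap\Pi(b,p_i^{n_i})$ for every $b\in B$, i.e.\ that no two elements of $B$ differ by a quantity of $p_i$-adic valuation exactly $n_i-1$; this is invariant under $B\mapsto rB$ and rules out $(A,rB)$ splitting for every fiber.

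The gap is that you never actually prove this statement. Your proposed route --- projecting the tiling to $\ZZ_{p_i^{n_i}}$, writing $P_B=c_BQ$, and cancelling $\Phi_{p_i^{n_i}}$ in $\ZZ[X]/(X^{p_i^{n_i}}-1)$ --- is left as a plan, and you yourself flag the non-cancellative division by $\Phi_{p_i^{n_i}}$ as ``where the real work lies.'' As written, Case (ii) is therefore incomplete. The paper closes this step with one line: since $\Phi_{p_i^{n_i}}\mid A$, the quantity $(|B|,p_i^{n_i})$ equals $\prod_{\alpha\le n_i-1,\,\Phi_{p_i^\alpha}\mid B}\Phi_{p_i^\alpha}(1)$, and the plane bound of \cite[Lemma 4.3 and Corollary 4.4]{LaLo2} gives $|B\cap\Pi(b,p_i^{n_i-1})|\le |B|/(|B|,p_i^{n_i})$ for every $b\in B$; combined with the equality (\ref{maxplanebound}) and the inclusion $\Pi(b,p_i^{n_i})\subset\Pi(b,p_i^{n_i-1})$, this forces $B\cap\Pi(b,p_i^{n_i-1})=B\cap\Pi(b,p_i^{n_i})$ by counting alone. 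That upper bound (or an equivalent substitute) is exactly the ingredient your argument is missing; with it, the delicate cyclotomic cancellation you anticipate is unnecessary.
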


\begin{proof}
Suppose first that (i) holds. This clearly implies the condition (II) of Lemma \ref{splittingslab}, hence the conclusion follows from the lemma.

Assume now that (ii) holds. Since $\Phi_{p_i^{n_i}}|A$, we must have 
$$(|B|,p_i^{n_i})=\prod_{\alpha: 1\leq\alpha\leq n_i-1,\Phi_{p_i^\alpha}|B}\Phi_{p_i^\alpha}(1).$$
 It follows as in \cite[Lemma 4.3 and Corollary 4.4]{LaLo2} that for every $b\in B$, 
\begin{equation*}
|B\cap\Pi(b,p_i^{n_i-1})|\leq |B|/(|B|,p_i^{n_i}).
\end{equation*}
Combining this with (\ref{maxplanebound}), we see that for every $b\in B$ we must have 
$B\cap\Pi(b,p_i^{n_i-1})= B\cap\Pi(b,p_i^{n_i})$. This, again, clearly implies the condition (II) of Lemma \ref{splittingslab}.
\end{proof}

%%%%%%%%%%%%%%%%%%%%%%%%%%%%%%%%%%%%%%

\section{Proof of Theorem \ref{T2-largeprime} and Corollary \ref{manyprimes}}\label{proof-largeprime}

%%%%%%%%%%%%%%%%%%%%%%%%%%%%%%%%%%%%%%

\subsection{Proof of Theorem \ref{T2-largeprime}}\label{proof-thm}

Let $M=p_1^{n_1}\dots p_K^{n_K}$, where $p_1,\dots,p_K$ are distinct primes and $n_1,\dots,n_K\in\NN$.
Assume that $A\oplus B=\ZZ_M$. We first prove the theorem under the stronger assumption that
\begin{equation}\label{e-largeprime2}
p_1>M_1,
\end{equation}
where $M_1=p_2^{n_2}\dots p_K^{n_K}$.
We first claim that (\ref{e-largeprime2}) implies uniform splitting in the $p_1$ direction. Assume towards contradiction that $z*F_1$ splits with parity $(A,B)$ and $z'*F_1$ splits with parity $(B,A)$ for some $z,z'\in \ZZ_M$. We have $|\Sigma_B(z*F_1)|=p_1>M_1$, so that at least two elements of $\Sigma_B(z*F_1)$ lie on one line in the $p_1$ direction, with $M/p_1\in \Div(B)$. Similarly, the $(B,A)$ splitting parity implies that $M/p_1\in\Div(A)$. However, by divisor exclusion it is not possible for both of these to hold.

Assume therefore that the tiling has uniform $(B,A)$ splitting parity, with $M/p_1\in\Div(A)$. By the same argument as above, all tilings $A\oplus rB=\ZZ_M$ with $r\in R$ also have uniform $(B,A)$ splitting parity. 
It follows from Lemma \ref{splittingslab} that the tiling $A\oplus B=\ZZ_M$ satisfy the slab reduction conditions in Theorem \ref{subtile}.
By Corollary \ref{slab-reduction}, to prove that $A$ and $B$ satisfy (T2), it suffices to prove that (T2) holds for both sets in any tiling $A'\oplus B'=\ZZ_{M'}$, where $M'=M/p_1$. Iterating the procedure, we eventually reduce the question to proving (T2) for all tilings of period $M_1$. But at that point, the assumption on $M_1$ implies the conclusion.

For the full strength of the theorem, we need the proposition below.

\begin{proposition}\label{Blowbound}
Assume that 
\begin{equation}\label{Blowbound1}
p_i>(|B|,M_i).
\end{equation}
If $M/p_i\notin \Div(A)$, then the tiling $A\oplus B=\ZZ_M$ has uniform $(A,B)$ splitting parity in the $p_i$ direction.
\end{proposition}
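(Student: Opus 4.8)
The hypothesis $M/p_i\notin\Div(A)$ tells us that no two elements of $A$ lie on a common line in the $p_i$ direction, i.e.\ for each line $\ell_i(x)$ we have $|A\cap\ell_i(x)|\le 1$. By Lemma \ref{no-upgrades}, every fiber $z*F_i$ splits with parity $(A,B)$ or $(B,A)$; suppose towards a contradiction that some fiber $z*F_i$ splits with parity $(B,A)$. Then $\Sigma_A(z*F_i)$ has $p_i$ elements, and by the $(B,A)$ parity these elements all lie in a single plane $\Pi(a,p_i^{n_i-1})$, with distinct residues modulo $p_i^{n_i}$ on that plane — in particular they sit on $p_i$ distinct lines in the $p_i$ direction, which is consistent with $M/p_i\notin\Div(A)$. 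The leverage must instead come from the $B$-side: the $(B,A)$ parity forces $p_i^{n_i}\mid b-b'$ for all $b,b'\in\Sigma_B(z*F_i)$, so $\Sigma_B(z*F_i)$ is contained in a single plane $\Pi(b,p_i^{n_i})$.

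The key step is to count how the $p_i$ elements tiling one fiber can be ``reused'' across the $(|B|,M_i)$ translates of the plane $\Pi(b,p_i^{n_i})$ inside the corresponding $\Pi(b,p_i^{n_i-1})$-slab, and to derive a contradiction from $p_i>(|B|,M_i)$. Concretely, I would fix $z*F_i$ splitting with parity $(B,A)$, write $z=a_0+b_0$ with $a_0\in A$, $b_0\in B$, and translate so that $0\in A\cap B$ and $a_0=0$. Then the $p_i$ equations $a_\mu+b_\mu=\mu M/p_i$ ($\mu=0,\dots,p_i-1$) have $b_\mu\in B\cap\Pi(0,p_i^{n_i})$ and $a_\mu\in A$ pairwise distinct, lying on $p_i$ distinct lines in the $p_i$ direction. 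The plan is to show that each such $a_\mu$ forces the existence of a distinct element of $B$ in $\Pi(0,p_i^{n_i})$: since $M/p_i\notin\Div(A)$, the line $\ell_i(a_\mu)$ meets $A$ only at $a_\mu$, so the other $p_i-1$ elements of $\ell_i(a_\mu)$ are tiled by $B$; more precisely each point $a_\mu+\lambda M/p_i$ equals $a'+b'$ for some $a'\in A$, $b'\in B$, and controlling which plane $b'$ falls into is exactly where the argument is delicate. An alternative — and I suspect cleaner — route is via saturating sets and Lemma \ref{divallign}: $(B,A)$ parity of $F_i$ means $A_{x,b}\subset\Pi(x,p_i^{n_i})$ fails to be the right containment, so one instead argues that $\Phi_{p_i^{n_i}}\nmid B$ is incompatible with the combinatorics, using the bound $|B\cap\Pi(b,p_i^{n_i})|\le (|B|,M_i)\cdot|B\cap\Pi(b,p_i^{n_i-1})|/\!\cdots$ in the spirit of \cite[Lemma 4.3]{LaLo2}.

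The cleanest line of attack, which I would try first, runs through Lemma \ref{splittingslab} in reverse. If the tiling does \emph{not} have uniform $(A,B)$ splitting parity in the $p_i$ direction, then since it cannot have uniform $(B,A)$ parity either (as $M/p_i\notin\Div(A)$ would then force, via Corollary \ref{Aunif} combined with the contradiction from $M/p_i\notin\Div(A)$, something false — to be checked), there must be fibers of both parities; but the existence of a $(B,A)$-fiber means $M/p_i\notin\Div(A)$ is still fine on the $A$-side, so the contradiction has to be extracted from counting $B$ in planes. Here the arithmetic is: the $(B,A)$-fiber puts $p_i$ elements of $\Sigma_B(z*F_i)$ inside one plane $\Pi(b_0,p_i^{n_i})$, and by a disjointness argument (analogous to Lemma \ref{disj_sig_B}, with $A$ and $B$ swapped) applied across the lines through the $a_\mu$'s, one gets $p_i$ pairwise disjoint nonempty subsets of $B\cap\Pi(b_0,p_i^{n_i})$, each contributing at least one element per ``level set'' — forcing $|B\cap\Pi(b_0,p_i^{n_i})|$ to be a multiple of $p_i$ in a way that clashes with $(|B|,M_i)<p_i$ once one knows $|B\cap\Pi(b_0,p_i^{n_i})|\le(|B|,M_i)$ from the slab-type plane bound.

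\textbf{Main obstacle.} The crux — and the step I expect to be the real obstacle — is establishing the plane bound $|B\cap\Pi(b,p_i^{n_i})|\le (|B|,M_i)$ (or its exact analogue) under the hypothesis $M/p_i\notin\Div(A)$, and then matching it against the $p_i$ disjoint contributions coming from the $(B,A)$-split fiber. This requires a careful bookkeeping of which plane each complementary element lands in — essentially a local version of the mask-polynomial divisibility argument of \cite[Lemma 4.3, Corollary 4.4]{LaLo2}, but run on a single slab rather than globally — and then a clean combinatorial contradiction of the form ``$p_i$ disjoint nonempty sets inside a set of size $<p_i$''. Once the right plane bound is in place, the contradiction with $p_i>(|B|,M_i)$ should be immediate, and Lemma \ref{no-upgrades} promotes the local parity statement to uniform $(A,B)$ splitting parity in the $p_i$ direction, completing the proof.
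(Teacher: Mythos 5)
Your argument (the route you settle on in the third paragraph) is essentially the paper's proof: a single $(B,A)$-split fiber already forces $p_i$ distinct elements of $\Sigma_B(z*F_i)$ into one plane $\Pi(b_0,p_i^{n_i})$, which contradicts $|B\cap\Pi(b_0,p_i^{n_i})|\le(|B|,M_i)<p_i$. Both steps you flag as delicate are in fact immediate: the distinctness of the $b_\mu$ is one line --- if $b_\mu=b_{\mu'}$ then $a_\mu-a_{\mu'}=(\mu-\mu')M/p_i$, so $M/p_i\in\Div(A)$; this is the only place the hypothesis on $\Div(A)$ enters, and no line-counting, ``reuse'', or disjointness machinery is needed --- while the plane bound $|B\cap\Pi(z,p_i^{n_i})|\le(|B|,M_i)$ is exactly \cite[Lemma 2.3]{LaLo1}, valid for any tiling with no extra hypotheses, so your ``main obstacle'' is just a citation.
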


\begin{proof}
Let $x\in \ZZ_M$. Suppose that $x*F_i$ splits with parity $(B,A)$, so that 
there exist $a_0,a_1,\ldots,a_{p_i-1}\in A$ and $b_0,b_1,\ldots,b_{p_i-1}\in B\cap\Pi(b_0,p_i^{n_i})$ such that 
%\begin{equation}\label{xsplit}
$$
a_\nu+b_\nu =x+\nu M/p_i \text{ for }  \nu\in \{0,1,\ldots,p_i-1\}.
$$
%\end{equation}
We first claim that if $M/p_i\notin \Div(A)$, then
$|\Sigma_{B}(x*F_i)|=p_i$, and in particular 
\begin{equation}\label{Blowbound2}
|B\cap \Pi (b_0,p_i^{n_i})|\geq p_i.
\end{equation}
(Note that this part does not use (\ref{Blowbound1}).) Indeed, If we had $b_\nu= b_{\nu'}$ for some $\nu\neq\nu'$, it would follow that $a_\nu -a_{\nu'}=(\nu-\nu')M/p_i$, contradicting the assumption that $M/p_i\notin \Div(A)$. Hence the elements $b_0,b_1,\ldots,b_{p_i-1}$ are all distinct, proving the claim.

To complete the proof of the proposition, we recall the plane bound from \cite[Lemma 2.3]{LaLo1}: for any $z\in\ZZ_M$ we have
$$
|B\cap \Pi(z,p_i^{n_i})|\leq(|B|,M_i).
$$
This is clearly incompatible with (\ref{Blowbound2}) if (\ref{Blowbound1}) holds.
\end{proof}

We return to the proof of Theorem \ref{T2-largeprime}. Assume that (\ref{e-largeprime}) holds, so that
\begin{equation*}%\label{e-largeprime3}
p_1>p_2^{n_2-1}\dots p_K^{n_K-1}.
\end{equation*}
We proceed by induction in $M$. To initialize, if $M$ has at most 2 distinct prime factors, then the conclusion follows from the Coven-Meyerowitz theorem. Assume now that the theorem holds for all $\widetilde{M}|M$ with $\widetilde{M}\neq M$, and let $A\oplus B=\ZZ_M$ be a tiling. If there exists some $i\in \{1,\dots,K\}$ such that $p_i\nmid |A|$ or $p_i\nmid |B|$, we may apply the subgroup reduction in \cite[Lemma 2.5]{CM} (see also Theorem 6.1 and Corollary 6.2 in \cite{LaLo1}) to reduce proving (T2) for $A$ and $B$ to our inductive assumption on tilings of $\ZZ_{M/p_i}$.

We may therefore assume that $p_i$ divides both $|A|$ and $|B|$ for each $i$. This implies that
$$
(|A|,M_1)\leq p_2^{n_2-1}\dots p_K^{n_K-1}<p_1.
$$
and similarly for $|B|$. By divisor exclusion, at least one of $M/p_1\not\in\Div(A)$ and $M/p_1\not\in\Div(B)$ must hold. Assume without loss of generality that the latter holds. 
Applying Proposition \ref{Blowbound} with $i=1$, we see that the tiling has uniform $(B,A)$ splitting parity in the $p_1$ direction, and that $M/p_1\in\Div(A)$. The rest of the proof is as above: by Lemma \ref{splittingslab}, $A$ and $B$ satisfy the slab reduction conditions in Theorem \ref{subtile}. By Corollary \ref{slab-reduction}, it suffices to prove (T2) for both tiles in any tiling of period $\widetilde{M}=M/p_1$. But this again follows from our inductive assumption.

%%%%%%%%%%%%%

\subsection{Proof of Corollary \ref{manyprimes}}\label{proof-manyprimes}

Lemma \ref{almostsquarefree} is based on the same argument as \cite[Corollary 6.2]{LaLo1} (see also \cite{Tao-blog, dutkay-kraus, shi}), but we modify the statement slightly to fit the present context.

\begin{lemma}\label{almostsquarefree} 
Let $M=M_0 p_1^{n_1}p_2^{n_2}p_3^{n_3} \dots p_K^{n_K}$ and $p_1,\dots,p_K$ are distinct primes not dividing $M_0$. 
Assume that $ A\oplus B=\ZZ_M $, and that:
\begin{itemize}
\item For each $i\in\{1,\dots,K\}$, the prime factor $p_i$ divides at most one of $|A|$ and $|B|$.  
(This happens e.g., if $n_i=1$ for all $i$.)
\item For any tiling $A_0\oplus B_0=\ZZ_{M_0}$, both $A_0$ and $B_0$ satisfy (T2).
\end{itemize}
Then both $A$ and $B$ satisfy (T2). 
\end{lemma}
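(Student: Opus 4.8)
The plan is to reduce Lemma \ref{almostsquarefree} to Theorem \ref{T2-largeprime} by stripping off the primes $p_1,\dots,p_K$ one at a time, using the hypothesis that each $p_i$ divides at most one of $|A|$ and $|B|$. First I would observe that this hypothesis is exactly what is needed to invoke the subgroup reduction of Coven--Meyerowitz \cite[Lemma 2.5]{CM} (equivalently Theorem 6.1 and Corollary 6.2 of \cite{LaLo1}): if $p_i \nmid |A|$, then $A$ is supported on a coset structure that lets us pass to a tiling of $\ZZ_{M/p_i}$ in which (T2) for $A$ and $B$ is equivalent to (T2) for the reduced tiles. So the argument is an induction on $K$ (the number of ``extra'' primes beyond $M_0$).

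For the inductive step, fix a tiling $A\oplus B = \ZZ_M$ with $M = M_0 p_1^{n_1}\cdots p_K^{n_K}$ satisfying the two bulleted hypotheses. By hypothesis, each $p_i$ ($i=1,\dots,K$) divides at most one of $|A|,|B|$; pick any such $i$, say $p_i \nmid |A|$ (the other case being symmetric). Apply the subgroup reduction to conclude that proving (T2) for $A$ and $B$ reduces to proving (T2) for both tiles in an induced tiling $A'\oplus B' = \ZZ_{M/p_i}$. Now $M/p_i = M_0 p_1^{n_1}\cdots p_i^{n_i-1}\cdots p_K^{n_K}$ still has the form covered by the lemma: the primes $p_1,\dots,p_K$ still divide at most one side each (divisibility of $|A'|,|B'|$ by a given prime can only decrease), and the $M_0$-tiling hypothesis is untouched. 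If $n_i - 1 = 0$ the prime $p_i$ has disappeared and we have one fewer extra prime; otherwise repeat with the same $p_i$. Iterating, after finitely many steps we either remove all of $p_1,\dots,p_K$ and land at a tiling of $\ZZ_{M_0}$, where (T2) holds by the second hypothesis, or at some stage a prime $p_i$ now divides both $|A'|$ and $|B'|$ — but that cannot happen here, since we assumed from the outset that each $p_i$ divides at most one side, and this property is preserved under the reductions. Hence the induction terminates with (T2) established.

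The one point requiring care is making sure the subgroup reduction is applicable and that its hypotheses persist through the iteration. The cleanest way to organize this is: at each stage, the tiling $\widetilde A \oplus \widetilde B = \ZZ_{\widetilde M}$ satisfies (a) $\widetilde M = M_0 \cdot (\text{squarefree-or-prime-power part in }p_1,\dots,p_K)$, (b) each $p_j$ divides at most one of $|\widetilde A|, |\widetilde B|$, and (c) (T2) holds for every tiling of $\ZZ_{M_0}$. Properties (a) and (c) are immediate; (b) holds because passing from $A$ to the reduced $A'$ replaces $|A|$ by $|A|$ or $|A|/p_i$ and likewise for $B$, so no new prime can divide both sides. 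The subgroup reduction applies precisely because, by (b), whenever a prime $p_j$ still divides $\widetilde M$ it fails to divide (at least) one of $|\widetilde A|,|\widetilde B|$, so at least one side of the tiling ``does not see'' $p_j$ fully, which is exactly the input \cite[Lemma 2.5]{CM} needs.

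The main obstacle — really the only substantive point — is the bookkeeping that guarantees the reduction can be applied at every stage and that one never gets stuck with a surviving prime dividing both cardinalities. Once the invariant (b) is identified and shown to be preserved, the proof is a routine finite induction. I expect no genuine difficulty beyond citing \cite[Lemma 2.5]{CM} (or the equivalent Corollary 6.2 of \cite{LaLo1}) correctly and tracking the cardinality divisibilities; no new idea is needed, and in particular Theorem \ref{T2-largeprime} itself is not even invoked in the proof of this lemma — it is Theorem \ref{CM-thm} (for $\le 2$ primes) and the subgroup reduction that do the work, together with the $M_0$-hypothesis.
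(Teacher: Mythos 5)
Your proposal is correct and follows essentially the same route as the paper: an induction that strips off the primes $p_1,\dots,p_K$ one factor at a time, using the hypothesis that each $p_i$ divides at most one of $|A|,|B|$ to invoke the subgroup reduction of \cite[Lemma 2.5]{CM} (equivalently \cite[Theorem 6.1, Corollary 6.2]{LaLo1}) until only a tiling of $\ZZ_{M_0}$ remains. The only cosmetic difference is that the paper makes explicit the Tijdeman dilation step $\tilde A(X)=A(X^{p_K})$ that places the tile inside the subgroup $p_K\ZZ_M$ before reducing, whereas you leave that step implicit in the citation.
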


\begin{proof}
We proceed by induction in $n_1+\dots+n_K$. The case $K=0$ and $M=M_0$ is covered by the assumptions of the lemma. Suppose now that $K\geq 1$ and that 
(T2) holds for both $A'$ and $B'$ in any tiling $ A'\oplus B'=\ZZ_{M/p_K}$. By the assumption of the lemma, at least one of $|A|$ and $|B|$ is not divisible by $p_K$. Assume without loss of generality that $p_K\nmid |A|$. By Theorem \ref{thm-tijdeman}, $\tilde A\oplus B=\ZZ_M$ is again a tiling, where $\tilde A(X)=A(X^{p_K})$. Since $\tilde A\subset p_K\ZZ_M$, we may apply \cite[Theorem 6.1]{LaLo1} and the inductive assumption to conclude that $A$ and $B$ satisfy (T2). 
\end{proof}

We now prove Corollary \ref{manyprimes}.
Let
$$M^*=p_1^{\alpha_1}p_2^{\alpha_2}p_3^{\alpha_3}p_4p_5\dots p_K q_1^{\beta_1}\dots q_L^{\beta_L},
$$
where $p_1,\dots,p_K,q_1,\dots,q_L$ are distinct primes and the assumptions of Corollary \ref{manyprimes} hold. 

Each of the primes $p_4,\dots,p_K$, as well as $p_3$ if (\ref{alpha-e1}) holds, appears in the factorization of $M$ only once. Hence each of these primes may divide at most one of $|A|$ and $|B|$ in any tiling $A\oplus B=\ZZ_M$ with $M|M^*$. By Lemma \ref{almostsquarefree}, it suffices to prove the corollary with
$$%\begin{equation}\label{growth-eM}
M^*=N_0 q_1^{\beta_1}\dots q_L^{\beta_L},
$$%\end{equation}
which we now assume. 
Using (\ref{growth-e1}), and applying Theorem \ref{T2-largeprime} inductively, we can also remove all of the $q_j$ primes, starting with $q_L$ and working back to $q_1$. This leaves us with proving the corollary with
$$%\begin{equation}\label{growth-eM}
M^*=N_0.
$$%\end{equation}
Let $M|M^*$ and $A\oplus B=\ZZ_M$.
If $M=N_0 =p_1^{2}p_2^{2}p_3^{2}$ and $|A|=|B|=p_1p_2p_3$, then both $A$ and $B$ satisfy (T2) by Theorem \ref{T2-3primes-thm}. If only two of the primes $p_1,p_2,p_3$ divide $M$, then $A$ and $B$ satisfy (T2) by Theorem \ref{CM-thm}. In all other cases, there is an $i\in\{1,2,3\}$ such that $p_i$ divides at most one of $|A|$ and $B$. We then apply Lemma \ref{almostsquarefree} to pass from a tiling of period $M$ to tiling of period $M/p_i$. Continuing this procedure, we eventually get to the point where Theorem \ref{CM-thm} applies.

%%%%%%%%%%%%%%%%%%%%%%%%%%

\section{Splitting consistency}\label{consistency}

%%%%%%%%%%%%%%%%%%%%%%%%%%%

Our work in Sections \ref{tiling-reductions} and \ref{proof-largeprime} shows that uniform splitting offers a viable path to proving the Coven-Meyerowitz conjecture. The goal of this section is to place the first few building blocks in the systematic study of uniform splitting. 

Partial results towards establishing uniform splitting might involve proving that splitting is uniform on certain subgrids of $\ZZ_M$. We are also interested in {\em splitting consistency} -- extensions of Lemma \ref{no-upgrades} saying that the elements of $A$ and $B$ that tile certain grids larger than a fiber must come from cosets of proper subgroups of $\ZZ_M$. In Section \ref{General-Section}, we prove such a result for 2-dimensional grids; a result of this type for 3-dimensional grids, but with additional assumptions, is given in Lemma \ref{consistent-splitting}.

In Section \ref{splitting-fibered}, we study tiling structure and splitting under the assumption that one of the sets $A$ and $B$ has fibered intersections with all $D(M)$-grids.
To provide some context, we describe briefly our approach to the case $M=p_i^{n_i}p_j^{n_j}p_k^{n_k}$. In \cite{LaLo2, LL-even}, given a tiling $A\oplus B=\ZZ_M$, we analyze the set whose mask polynomial is divisible by $\Phi_M$ (say, $A$). This divisibility condition places constraints on the structure of $A$ on any fixed $D(M)$-grid. Broadly speaking, on any fixed grid $A$ may either be written as a disjoint union of fibers in some fixed direction, or else it must have one out of several ``unfibered" structures we have identified explicitly. We say that $A$ is {\it fibered on $D(M)$-grids} if the former holds on every grid $\Lambda(a,D(M))$ for every $a\in A$. Our results in Section \ref{splitting-fibered} are an important part of the argument in the fibered grids case in \cite{LL-even}.

\subsection{General results}\label{General-Section}
We start with two results on splitting consistency on subgrids. In addition to applications in \cite{LL-even}, Lemma \ref{intersections} has been used in the proof of Lemma 6.4 in \cite{BL}.

\begin{lemma}\label{intersections}{\bf (Plane consistency)}
Let $M=p_1^{n_1}\dots p_K^{n_K}$, and assume that $A\oplus B=\ZZ_M$. Fix $i,j\in\{1,\dots,K\}$, and let
$\Lambda:= \Lambda(z,M/p_ip_j)$ for some $z\in\ZZ_M$.
Then there exists a $\nu\in\{i,j\}$ such that
%\begin{equation}\label{planesigma}
%\begin{split}
$$
\Sigma_A(\Lambda)\subset A\cap \Pi(a,p_\nu^{n_\nu-1}),
$$
$$
\Sigma_B(\Lambda)\subset B\cap \Pi(b,p_\nu^{n_\nu-1}),
$$
%\end{split}
%\end{equation}
where $a\in A$ and $b\in B$ satisfy $a+b=z$.

\end{lemma}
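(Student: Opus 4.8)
The plan is to reduce this to the one-dimensional fiber splitting statement in Lemma \ref{no-upgrades} by slicing the two-dimensional grid $\Lambda = \Lambda(z, M/p_ip_j)$ into fibers. Translating, we may assume $z = 0$ and $0 \in A \cap B$, so $a = b = 0$ works. The grid $\Lambda$ is a union of fibers in the $p_i$ direction; it is simultaneously a union of fibers in the $p_j$ direction. For each $x \in \Lambda$, the fiber $x * F_i$ is contained in $\Lambda$, so by Lemma \ref{no-upgrades} it splits with parity either $(A,B)$ or $(B,A)$, and likewise every $p_j$-fiber inside $\Lambda$ splits with one of the two parities. The key point to establish is that a single choice of direction $\nu \in \{i,j\}$ works for the \emph{whole} grid.

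First I would handle the case where the fiber $0 * F_i \subset \Lambda$ splits with parity $(A,B)$; by symmetry (interchanging $A$ and $B$) this also covers the $(B,A)$ case, and interchanging $i$ and $j$ covers the remaining possibilities. So suppose $0 * F_i$ splits with parity $(A,B)$. By Definition \ref{def-splitting}, all elements of $A$ that tile this fiber lie in a common plane $\Pi(\cdot, p_i^{n_i-1})$, and since $0 \in \Sigma_A(0*F_i)$, that plane is $\Pi(0, p_i^{n_i-1})$. I would then claim $\nu = i$ works globally: every $p_i$-fiber inside $\Lambda$ splits with parity $(A,B)$, and $\Sigma_A(\Lambda) \subset \Pi(0, p_i^{n_i-1})$, $\Sigma_B(\Lambda) \subset \Pi(0, p_i^{n_i-1})$. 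To see the uniformity of parity: if some other $p_i$-fiber $w * F_i \subset \Lambda$ split with parity $(B,A)$, then $M/p_i \in \Div(A)$ (two of its $\Sigma_B$-elements lie on a $p_i$-line... actually more directly: $p_i^{n_i} \mid a - a'$ fails for the two elements tiling it while $p_i^{n_i-1} \parallel$ differences of $A$-elements — wait, need care). The cleaner route: from $0*F_i$ splitting with parity $(A,B)$ we get $M/p_i = p_i^{n_i-1}M_i / \gcd$ ... let me instead argue via divisor exclusion directly. The $(A,B)$ parity of $0*F_i$ forces, for the element $b \neq 0$ in $\Sigma_B(0*F_i)$, that $p_i^{n_i-1} \parallel b$, i.e. $(b, M/p_i)$ has $p_i$-part exactly $p_i^{n_i-1}$ — hmm, this gives that $M/p_i \cdot (\text{something})$... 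The point I want: $(A,B)$-splitting of one $p_i$-fiber is incompatible, via Theorem \ref{thm-sands}, with $(B,A)$-splitting of another $p_i$-fiber in the same $\Lambda$, because the first yields a divisor relation that puts a common value in $\Div(B)$ while the second puts it in $\Div(A)$; I would spell this out by noting that in $(A,B)$-splitting $\Sigma_B$ of a fiber contains two points differing by something with $p_i$-part $p_i^{n_i-1}$ and $p_j$-part anything, but the relevant shared divisor with the other fiber's $\Sigma_A$ is of the form $M/(p_i p_j^{c})$.

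\textbf{Main obstacle.} The delicate step is the \emph{mixing} argument: ruling out that one $p_i$-fiber of $\Lambda$ splits with parity $(A,B)$ while another $p_j$-fiber of $\Lambda$ splits with parity $(B,A)$, and similarly ruling out that two $p_i$-fibers of $\Lambda$ split with opposite parities. I expect to handle this by choosing two elements $a, a' \in A$ tiling points of $\Lambda$ via opposite parities, together with their $B$-partners $b, b'$, forming the difference $(a - a') + (b - b') = (\text{point of } \Lambda) - (\text{point of } \Lambda) \in M/(p_ip_j)\ZZ_M$, and computing $(a-a', M/p_ip_j) = (b - b', M/p_ip_j)$; then the opposite-parity assumptions pin down the $p_i$- and $p_j$-adic valuations of $a - a'$ and $b - b'$ in a way that forces a common nontrivial divisor in $\Div(A) \cap \Div(B)$, contradicting Theorem \ref{thm-sands}. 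Once uniformity of the chosen direction $\nu$ is established — say $\nu = i$, so all $p_i$-fibers of $\Lambda$ split with parity $(A,B)$ — the containment $\Sigma_A(\Lambda) \subset \Pi(0, p_i^{n_i-1})$ follows because any $a \in \Sigma_A(\Lambda)$ tiles some point of some $p_i$-fiber in $\Lambda$, hence lies in the $\Pi(\cdot, p_i^{n_i-1})$-plane of that fiber; but all these planes coincide with $\Pi(0, p_i^{n_i-1})$ once one shows the $\Sigma_A$-planes of the various $p_i$-fibers agree — which again reduces to the same divisor-exclusion bookkeeping, comparing $A$-partners across two $p_i$-fibers inside $\Lambda$ as in the proof of Lemma \ref{disj_sig_B}. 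The argument for $\Sigma_B$ is symmetric, using that $(A,B)$-parity places distinct $B$-partners in a single $\Pi(\cdot, p_i^{n_i-1})$-plane. I would organize the writeup as: (1) normalize $z=0$, $0 \in A \cap B$; (2) invoke Lemma \ref{no-upgrades} on every fiber of $\Lambda$ in both directions; (3) the mixing lemma fixing a global $\nu$; (4) collapse of the planes.
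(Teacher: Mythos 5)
Your plan hinges on the claim that the parity of the single fiber $0*F_i$ determines the direction $\nu$: ``suppose $0*F_i$ splits with parity $(A,B)$\dots then $\nu=i$ works globally,'' with the $(B,A)$ case handled ``by symmetry (interchanging $A$ and $B$).'' Since the conclusion of the lemma is symmetric in $A$ and $B$, this reduction would prove that $\nu=i$ works no matter what, and that is false. Concretely, take $M=12$, $A=\{0,1,2\}$, $B=\{0,3,6,9\}$, $p_i=2$ (so $n_i=2$), $p_j=3$, $\Lambda=\Lambda(0,2)=\{0,2,4,6,8,10\}$. The fiber $0*F_i=\{0,6\}$ has $\Sigma_A=\{0\}$, $\Sigma_B=\{0,6\}$ and splits with parity $(A,B)$ exactly as in the case you detail, yet $\Sigma_B(\Lambda)=\{0,3,6,9\}\not\subset\Pi(0,p_i^{n_i-1})=2\ZZ_{12}$; here only $\nu=j$ works. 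So the case split cannot be governed by the parity of one fiber, and the two load-bearing steps of your outline --- that all $p_i$-fibers of $\Lambda$ share one parity, and that their $\Sigma_A$-planes coincide --- are respectively not what is needed and not established (your own text stalls at exactly this point). Note also that uniform splitting parity on grids is treated in the paper as a hard property requiring extra hypotheses (Proposition \ref{Blowbound}, Lemma \ref{consistent-splitting}), so it should not be expected to come for free here. A further technical issue: forming $(a-a')+(b-b')\in (M/p_ip_j)\ZZ_M$ only yields $(a-a',M/p_ip_j)=(b-b',M/p_ip_j)$, which does not by itself put a common divisor of $M$ into $\Div(A)\cap\Div(B)$; one must separately control the $p_i$- and $p_j$-valuations to contradict Theorem \ref{thm-sands}.

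The paper's proof sidesteps parity entirely. Negating the conclusion produces $z'=a'+b'$ with $p_i^{n_i-1}\nmid a-a'$ and $z''=a''+b''$ with $p_j^{n_j-1}\nmid b-b''$ (each failure of one inclusion forces the failure of the companion inclusion, since $z-z'$ is divisible by $M/p_ip_j$). One then takes the corner point $z_{ij}=a_{ij}+b_{ij}$ lying on both the $p_i$-fiber through $z'$ and the $p_j$-fiber through $z''$; Lemma \ref{no-upgrades} applied to those two transverse fibers gives $p_i^{n_i-1}\mid a_{ij}-a'$ and $p_j^{n_j-1}\mid b_{ij}-b''$, hence $p_i^{n_i-1}\nmid a-a_{ij}$ and $p_j^{n_j-1}\nmid b-b_{ij}$. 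Combined with $M/p_ip_j\mid (a-a_{ij})+(b-b_{ij})$, this pins down all prime-power gcds and yields $(a-a_{ij},M)=(b-b_{ij},M)\neq M$, contradicting divisor exclusion. If you want to salvage your approach, the single useful idea to extract from the paper's argument is this use of the corner point to transfer the two independent ``escapes'' onto one pair $(a_{ij},b_{ij})$.
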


\begin{proof}
Assume towards contradiction that the lemma is false. Then there exist $z',z''\in \Lambda$ such that $z'=a'+b'$ and $z''=a''+b''$ with $a',a''\in A$ and $b',b''\in B$, and
\begin{equation}\label{split-e12}
p_i^{n_i-1}\nmid a-a',\ \ p_j^{n_j-1}\nmid b-b''.
\end{equation}
Let $z_{ij}$ be the unique element of $\Pi(z',p_j^{n_j})\cap \Pi(z'',p_i^{n_i})\cap\Lambda$, so that
$$
M/p_i \mid z_{ij}-z',\ \ M/p_j \mid z_{ij}-z''.
$$
Let $a_{ij}\in A$ and $b_{ij}\in B$ satisfy $z_{ij}=a_{ij}+b_{ij}$. By Lemma \ref{no-upgrades}, we have
$$
p_i^{n_i-1} \mid a_{ij}-a',\ \ p_j^{n_j-1} \mid b_{ij}-b''.
$$
Together with (\ref{split-e12}), this implies that
\begin{equation}\label{split-e13}
p_i^{n_i-1}\nmid a-a_{ij} \hbox{ and } p_j^{n_j-1}\nmid b-b_{ij}.
\end{equation}
However, we also have $(M/p_ip_j)|z-z_{ij}= (a-a_{ij})+(b-b_{ij})$. Hence $(a-a_{ij},p_k^{n_k})=(b-b_{ij},p_k^{n_k})$ for all $k\not\in\{i,j\}$, and, by (\ref{split-e13}), the same holds for $k\in\{i,j\}$.  This contradicts divisor exclusion and ends the proof of the lemma.
\end{proof}

\begin{corollary}\label{intersections-plus}{\bf (No cross-direction fibers)}
Let $M=p_1^{n_1}\dots p_K^{n_K}$, and assume that $A\oplus B=\ZZ_M$. Let $\Lambda_{ij}:= \Lambda(z,M/p_ip_j)$ as in Lemma \ref{intersections}. Suppose that $a_0*F_i\subset A$ for some $a_0 \in\Sigma_A(\Lambda_{ij})$. Then
\begin{equation}\label{planesigma-1dir}
\Sigma_A(\Lambda_{ij}) \subset \Pi(a_0,p_j^{n_j-1}).
\end{equation}
\end{corollary}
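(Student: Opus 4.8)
The plan is to prove (\ref{planesigma-1dir}) directly, by showing that an arbitrary $a_1\in\Sigma_A(\Lambda_{ij})$ satisfies $p_j^{n_j-1}\mid a_1-a_0$; since $\Pi(a_0,p_j^{n_j-1})=\Lambda(a_0,p_j^{n_j-1})$, this is exactly the assertion. First I would fix witnesses for the two membership statements: choose $z',z_1\in\Lambda_{ij}$ and $b',b_1\in B$ with $z'=a_0+b'$ and $z_1=a_1+b_1$. The geometric input is that inside the two-dimensional grid $\Lambda_{ij}$ (a coset of $(M/p_ip_j)\ZZ_M\cong\ZZ_{p_i}\oplus\ZZ_{p_j}$), the fiber $R:=z'*F_i$ through $z'$ in the $p_i$ direction and the fiber $C:=z_1*F_j$ through $z_1$ in the $p_j$ direction meet in exactly one point $w$ --- concretely, $w$ is the unique element of $\ZZ_M$ with $w\equiv z'\pmod{M/p_i}$ and $w\equiv z_1\pmod{M/p_j}$, which is consistent precisely because $z'\equiv z_1\pmod{M/p_ip_j}$, i.e.\ because $z',z_1\in\Lambda_{ij}$.

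Next I would pin down the tiling representation of $w$. Writing $w=z'+\mu M/p_i$ for a suitable $\mu\in\{0,\dots,p_i-1\}$, we get $w=(a_0+\mu M/p_i)+b'$; the hypothesis $a_0*F_i\subset A$ forces $\tilde a:=a_0+\mu M/p_i\in A$, and since $b'\in B$, uniqueness of the representation in $A\oplus B=\ZZ_M$ shows that $w=\tilde a+b'$ is the tiling representation of $w$. This yields two divisibilities. On one hand $\tilde a-a_0=\mu M/p_i$ is a multiple of $M/p_i=p_i^{n_i-1}M_i$, hence of $p_j^{n_j}$ (because $j\neq i$, so $p_j^{n_j}\mid M_i$), and in particular $p_j^{n_j-1}\mid\tilde a-a_0$. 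On the other hand $w\in C$ gives $\tilde a\in\Sigma_A(C)$, and $a_1\in\Sigma_A(C)$ as well; applying Lemma \ref{no-upgrades} to the fiber $C$ in the $p_j$ direction gives $\Sigma_A(C)\subset\Pi(\tilde a,p_j^{n_j-1})$, so $p_j^{n_j-1}\mid a_1-\tilde a$. Adding, $p_j^{n_j-1}\mid(a_1-\tilde a)+(\tilde a-a_0)=a_1-a_0$, which is what we want.

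I do not expect a genuine obstacle here: Lemma \ref{no-upgrades} already supplies all of the fiber-level rigidity, and the only new leverage is that a single full fiber $a_0*F_i$ sitting inside $A$ pins down the entire row $R$ as $(a_0*F_i)\oplus\{b'\}$, which then propagates to the transverse fiber $C$. The one point needing a little care is the bookkeeping with the moduli $M/p_i$, $M/p_j$, $M/p_ip_j$, in particular checking that the intersection point $w$ is well defined --- this is exactly where the hypothesis $z',z_1\in\Lambda_{ij}$ is used. One could alternatively route the argument through Lemma \ref{intersections}, which produces a single $\nu\in\{i,j\}$ working simultaneously for $\Sigma_A$ and $\Sigma_B$, and then show that $\nu=i$ is incompatible with $a_0*F_i\subset A$; but the direct route above seems shorter and avoids having to track $\Sigma_B(\Lambda_{ij})$ at all.
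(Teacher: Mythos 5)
Your proof is correct and is essentially the paper's own argument: the paper normalizes $a_0=b'=z'=0$ and applies Lemma \ref{no-upgrades} to every $p_j$-fiber $a*F_j$ with $a\in F_i$ at once, whereas you run the same computation for the single $p_j$-fiber through an arbitrary witness $z_1$. The key mechanism --- that the full fiber $a_0*F_i\subset A$ forces the tiling representative of the intersection point $w$ to be $\tilde a+b'$ with $\tilde a\in a_0*F_i$, after which Lemma \ref{no-upgrades} gives $p_j^{n_j-1}\mid a_1-\tilde a$ --- is identical, so there is nothing to fix.
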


\begin{proof}
Let $b_0\in B$ satisfy $z_0:=a_0+b_0\in \Lambda_{ij}$. 
By translational invariance, we may assume that $z_0=a_0=b_0=0$, so that $F_i\subset A$.
Then $\Lambda_{ij}=\bigcup_{a\in F_i} a*F_j$. For each $a\in F_i$, we have $a=a+0$, with $a\in A$ and $0\in B$. Applying Lemma \ref{no-upgrades} to $a*F_j$, we see that $\Sigma_A(a*F_j) \subset \Pi(a_0,p_j^{n_j-1})$. But $a\in F_i$ was arbitrary, and (\ref{planesigma-1dir}) follows.
\end{proof}

\subsection{Splitting for fibered grids}\label{splitting-fibered}

In this section, we assume the following.

\medskip\noindent
{\bf Assumption (F):} We have $A\oplus B=\ZZ_M$, where $M=p_i^{n_i}p_j^{n_j} p_k^{n_k}$ has three distinct prime factors. Furthermore,
$\Phi_M|A$, and $A$ is fibered on $D(M)$-grids.

\medskip

Let $\cali$ be the set of elements of $A$ that belong to a fiber in the $p_i$ direction, that is, 
$$
\cali=\{a\in A:\ \bbA_{M/p_i}[a]=\phi(p_i)\}.
$$
The sets $\calj$ and $\calk$ are defined similarly.  
The assumption (F) implies that $A=\cali\cup\calj\cup\calk$. In general, this does {\em not} have to be a disjoint union and an element of $A$ may belong to two or all three of these sets.

We will write $D=D(M)$ for short.
We also define the direction assignment function $\kappa:A\to\{i,j,k\}$ as follows. Write $A=\bigcup_{x\in\{1,\dots,M/D\}} A\cap\Lambda(x,D)$. By (F), for each $x\in\{1,\dots,M/D\}$ such that $A\cap\Lambda(x,D)$ is nonempty, there exists $\nu(x)\in\{i,j,k\}$ such that $A\cap\Lambda(x,D)$ is fibered in the $p_{\nu(x)}$ direction. (If there exists more than one such $\nu$, we choose one arbitrarily and fix that choice.) We then let $\kappa(a):=\nu(x)$ for all $a\in A\cap\Lambda(x,D)$.

With this definition, we have $\kappa(a)=\kappa(a')$ for all $a,a'\in A$ such that $D|(a-a')$. We also define
$$F(a):= a*F_{\kappa(a)}\hbox{ for }a\in A.
$$
Observe that, for any $a,a'\in A$, the fibers $F(a)$ and $F(a')$ are either identical or disjoint. Indeed, suppose that $a''\in F(a)\cap F(a')$; then $a,a'$ belong to the same grid $\Lambda(a'',D)$, so that $\kappa(a)=\kappa(a')$ by definition, and $F(a)=F(a')$.

\begin{lemma}\label{fiberbasic}
Let $a,a'\in A$ and $b,b'\in B$. If 
$(b*F(a))\cap (b'*F(a'))\neq\emptyset$, then $b=b'$ and $F(a)=F(a')$.
\end{lemma}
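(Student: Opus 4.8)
The goal is to show that translates of the fibers $F(a)$ by elements of $B$ are pairwise disjoint (or identical, and only when both the $B$-translate and the fiber coincide). The natural approach is to exploit the tiling $A \oplus B = \ZZ_M$ together with the divisor exclusion theorem (Theorem \ref{thm-sands}) and the splitting structure of individual fibers (Lemma \ref{no-upgrades}). First I would suppose $z \in (b*F(a)) \cap (b'*F(a'))$, so that there are $a_1 \in F(a)$, $a_1' \in F(a')$ with $z = b + a_1 = b' + a_1'$. Here $a_1, a_1'$ lie in cosets of $F_{\kappa(a)}$ and $F_{\kappa(a')}$ respectively. I would want to reduce to showing $\kappa(a) = \kappa(a')$ first; once the two fibers lie in the same direction, say $p_\nu$, the point $z$ determines a single fiber $z * F_\nu$ that meets both translates, and then I can compare how $A$ and $B$ tile that fiber.

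\textbf{Key steps.} (1) Translate so that $z = 0$, i.e.\ $b = -a_1$ and $b' = -a_1'$ with $a_1 \in F(a)$, $a_1' \in F(a')$. (2) Case analysis on whether $\kappa(a) = \kappa(a')$. If they are equal to some index $\nu$, then both $a_1$ and $a_1'$ belong to the fiber $0 * F_\nu$ (after translation), which is tiled by $A$ and $B$; by Lemma \ref{no-upgrades} the fiber $0*F_\nu$ splits with one parity. If it splits with parity $(A,B)$, then all elements of $\Sigma_A(0*F_\nu)$ lie in one plane $\Pi(\cdot, p_\nu^{n_\nu})$, while $\Sigma_B$ elements are spread out with $p_\nu^{n_\nu-1}\parallel$ differences; comparing $b = -a_1$ and $b' = -a_1'$, since $a_1, a_1' \in \Sigma_A$, we get $p_\nu^{n_\nu} \mid a_1 - a_1'$, hence $p_\nu^{n_\nu}\mid b - b'$, and I need to push this to $b = b'$ using that $b, b'$ both lie in $\Sigma_B$ of a fiber with parity $(A,B)$ — but actually such $b,b'$ need not be in $\Sigma_B(0*F_\nu)$; rather $b, b' \in B$ and $-b, -b' \in \Sigma_A(0*F_\nu) + \text{(translate)}$, so I would argue that $a_1 = a_1'$ directly (they are the unique $A$-tiler of $z$ sitting in that fiber, forced by the splitting pattern), whence $b = b'$ and then $F(a) = F(a')$ follows from the observation in the paragraph preceding the lemma that two fibers $F(a), F(a')$ are identical or disjoint and here they share the point $a_1 = a_1'$. (3) The remaining case $\kappa(a) \neq \kappa(a')$, say $\nu = \kappa(a)$ and $\mu = \kappa(a')$ with $\nu \neq \mu$: then $a_1 - a = -a \cdot (\text{multiple of } M/p_\nu)$ lies in $a*F_\nu$ and $a_1' - a'$ lies in $a'*F_\mu$; I would derive a divisor-exclusion contradiction by examining $a_1 - a_1' = b' - b$, whose $\gcd$ with $M$ on the one hand involves only powers of $p_\nu$ and $p_\mu$ from the fiber structure, and on the other hand must be avoided by $\Div(A) \cap \Div(B) = \{M\}$ — more carefully, I expect to use that $(A \cap \Lambda(a_1, D))$ is fibered in direction $\nu$ while $(A \cap \Lambda(a_1',D))$ is fibered in direction $\mu$, and if $a_1, a_1'$ are forced into the same $D$-grid then $\kappa$ would have to be consistently defined, contradicting $\nu \neq \mu$.

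\textbf{Main obstacle.} The delicate part is the mixed-direction case $\kappa(a)\neq\kappa(a')$: I need to rule it out without assuming anything about how $a_1$ and $a_1'$ relate within $\ZZ_M$ beyond $a_1 - a_1' = b' - b \in \Div(B)$ and $a_1 - a_1' \in \Div(A)$, forcing $a_1 - a_1' \in \{0 \bmod M\}$... no, that would immediately give $a_1 = a_1'$, hence $a_1 \in F(a) \cap F(a')$, so $F(a) = F(a')$ by the identical-or-disjoint observation, and then $\kappa(a) = \kappa(a')$ anyway, contradicting the case hypothesis, \emph{unless} $F(a) = F(a')$ is exactly the conclusion we want. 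So the real subtlety is just making sure the bookkeeping with translations is airtight and that the appeal to divisor exclusion ($a_1 - a_1' \in \Div(A) \cap \Div(B) = \{M\}$, giving $a_1 = a_1'$) is valid — which it is, since $a_1 - a_1' = b' - b$ with $a_1, a_1' \in A$ and $b, b' \in B$. Once $a_1 = a_1'$ we get $b = b'$ for free, and $F(a) = F(a')$ because $a_1 \in F(a) \cap F(a')$ and distinct fibers $F(\cdot)$ are disjoint. So in fact the whole proof collapses to: the common point $z$ yields $a_1 - a_1' = b' - b \in \Div(A)\cap\Div(B)$, forcing $a_1 = a_1'$, hence $b = b'$ and $F(a) = F(a')$; the only thing to be careful about is justifying $F(a) = F(a')$ from $a_1 \in F(a)\cap F(a')$, which is precisely the preceding remark in the paper.
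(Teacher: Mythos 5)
Your final ``collapsed'' argument is correct and is essentially the paper's proof: the common point $z=b+a_1=b'+a_1'$ with $a_1\in F(a)\subset A$ and $a_1'\in F(a')\subset A$ forces $a_1=a_1'$ and $b=b'$ (the paper cites uniqueness of the tiling representation directly, while you route through $\Div(A)\cap\Div(B)=\{M\}$, which is equivalent), and then $F(a)=F(a')$ follows from the identical-or-disjoint observation preceding the lemma. The lengthy case analysis on $\kappa(a)$ versus $\kappa(a')$ and the splitting-parity discussion are unnecessary, as you yourself conclude.
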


\begin{proof}
Suppose that $z\in (b*F(a))\cap (b'*F(a'))$. Then $z=b+a_1=b'+a_2$ for some $a_1\in F(a)$ and $a_2\in F(a')$. It follows that $b=b'$ and $a_1=a_2$, so that $F(a)\cap F(a')\neq\emptyset$, implying that $F(a)=F(a')$ as claimed.
\end{proof}

We will prove that every $D$-grid is tiled by fibers in at most 2 directions, with the ``stratified" structure described in Lemma \ref{2dir}. We then prove a few results concerning the localization of $\Sigma_A(\Lambda)$, in the same spirit as Lemma \ref{intersections}, but with stronger conclusions since we now have the fibering assumption at our disposal.

The geometric intuition for the next result is provided by tilings of a 3-dimensional space  by rectangular columns. Define $R_i,R_j,R_k\subset \RR^3$  by
$$
R_i=\RR\times[0,1]\times[0,1],\ \ R_j=[0,1]\times\RR\times[0,1],\ \ R_k=[0,1]\times[0,1]\times \RR.
$$
Then any tiling of $\RR^3$ by translated, pairwise disjoint (up to sets of measure zero) copies of $R_i,R_j,R_k$ must use translated copies of at most two of these sets. Moreover, the tiling can be stratified into slabs of thickness 1, with each slab tiled by translates of just one of $R_i,R_j,R_k$.

\begin{lemma}\label{2dir}%{\bf (Two directions in a grid)}
Assume (F), and let $\Lambda=\Lambda(z_0,D)$ for some $z_0\in\ZZ_M$. 

\begin{itemize}

\item[(i)] Suppose that 
%\begin{equation}\label{nonemptyint}
$$
\Sigma_A(\Lambda) \cap \cali\cap\calj\cap\calk\neq\emptyset,
$$
%\end{equation}
and that there exists $a\in\Sigma_A(\Lambda)\cap \cali\cap\calj\cap\calk$ such that $\kappa(a)=i$. Then $\kappa(a')=i$ for all $a'\in\Sigma_A(\Lambda)$, and in particular $\Sigma_A(\Lambda)\subset\cali$. 

\item[(ii)] Assume that
\begin{equation}\label{emptyint-2}
\Sigma_A(\Lambda) \cap \cali\cap\calj\cap\calk = \emptyset.
\end{equation} 
Then there is a subset $S\subset\{i,j,k\}$ with $|S|\leq 2$ such that $\kappa(a)\in S$ for all $a\in\Sigma_A(\Lambda)$. In particular, $\Sigma_A(\Lambda)$ is contained in the union of just two of the sets $\cali,\calj,\calk$.

\item[(iii)] Assume (possibly after a permutation of the indices $i,j,k$) that $S=\{i,j\}$, and let
$z_\nu=z_0+\nu M/p_k$ for $\nu=0,1,\dots,p_k-1$. Then for each $\nu$, there is a $\lambda(\nu)\in\{i,j\}$ such that
\begin{equation}\label{tiledbyFi}
\kappa(a)=\lambda(\nu) \hbox{ for all }   a\in \Sigma_A(z_\nu*F_i*F_j)).
\end{equation}
%\begin{equation}\label{tiledbyFj}
%\kappa(a)=i \hbox{ for all }   a\in \Sigma_A(z_\nu*F_i*F_j)).
%\end{equation}
%We will refer to the above situation as a {\it $p_k$-stratified structure}.

\end{itemize}
\end{lemma}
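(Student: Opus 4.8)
The plan is to prove the three parts in sequence, with part (i) and the underlying fiber-disjointness structure (Lemma \ref{fiberbasic}) doing most of the work, and parts (ii) and (iii) following by combinatorial bookkeeping. Throughout I will use that $\Lambda = \Lambda(z_0,D)$ is a $D$-grid, which under the Chinese Remainder identification splits as a product $\Lambda \cong \ZZ_{p_i^{n_i-1}}\times \ZZ_{p_j^{n_j-1}}\times \ZZ_{p_k^{n_k-1}}$ (scaled and translated), and that by (F) together with $\Phi_M\mid A$ every element $a\in\Sigma_A(\Lambda)$ comes with a fiber $F(a) = a*F_{\kappa(a)}$ which, by the observation preceding Lemma \ref{fiberbasic}, is contained in $A$. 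Since $\Lambda\oplus B$ does not literally make sense, the right object to track is: $\Lambda$ is tiled by the translated fibers $\{\,b*F(a)\ :\ a\in\Sigma_A(\Lambda),\ b\in B,\ (b*F(a))\cap\Lambda\neq\emptyset\,\}$, and by Lemma \ref{fiberbasic} these fibers are pairwise disjoint; hence they partition $\Lambda$. This is the ``tiling of a box by columns'' picture made precise.

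For part (i): suppose $a\in\Sigma_A(\Lambda)\cap\cali\cap\calj\cap\calk$ with $\kappa(a)=i$, and suppose for contradiction some $a'\in\Sigma_A(\Lambda)$ has $\kappa(a')=j$ (the case $\kappa(a')=k$ is symmetric). After translating we may assume $0\in A\cap B$ and $a\in\Lambda$, $z_0=0$. The fiber $F(a)=a*F_i$ lies entirely inside $\Lambda$ (because $F_i\subset D\ZZ_M$... more precisely $M/p_i$ is a multiple of $D$ up to the $p_i$-component, so $a*F_i$ meets $\Lambda$ only in points congruent mod $D$ — I'd check that $a*F_i$ either lies in $\Lambda$ or is disjoint from it, and here it meets $\Lambda$ at $a$, so it lies in $\Lambda$). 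Now consider the line through $a$ in the $p_j$ direction *inside $\Lambda$*: the point $a$ is tiled by $a = a + 0$ with $0\in B$, but $a$ also lies on $F(a) = a*F_i \subset A$, and separately $a\in\calj$ means $a$ sits in a $p_j$-fiber of $A$. The key tension: every point $z$ of $\Lambda$ in the plane $\Pi(a,p_i^{n_i})\cap\Lambda$ must be covered by some translate $b*F(a'')$; counting cardinalities in the sub-box $\Lambda$ of the $p_i$-fibers $b*F_i$ (each using $p_i$ points, all in one $p_i$-line) versus $p_j$- and $p_k$-fibers, and using that the total is tiled, forces all fibers covering $\Lambda$ to be $p_i$-fibers once one ``maximally fibered'' element $a$ (lying in fibers in *all three* directions) appears with $\kappa(a)=i$. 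I expect the cleanest route is: if $\kappa(a')=j$ then $a'*F_j\subset A$; but $a'\in\Lambda$ and $a\in\Lambda$, and the planes $\Pi(a,p_i^{n_i})$, $\Pi(a',p_j^{n_j})$ intersect $\Lambda$ in a common line $L$ in the $p_k$ direction; pick $a''\in\Sigma_A(L)$, apply Lemma \ref{no-upgrades} in the $p_i$ direction (from $a$'s side) and in the $p_j$ direction (from $a'$'s side) to get incompatible constraints on $a''$ versus the $B$-elements, contradicting divisor exclusion — exactly the mechanism of Lemma \ref{intersections}, but now the full-triple-fibered hypothesis on $a$ is what lets us run it with $\kappa(a)=i$ fixed.

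For part (ii): assume \eqref{emptyint-2}. By Lemma \ref{intersections} applied to the $2$-dimensional sub-problem, for each pair of directions there is a preferred one; I want to rule out that $\Sigma_A(\Lambda)$ meets all three of $\cali,\calj,\calk$ in an essential way. Suppose $a^{(i)},a^{(j)},a^{(k)}\in\Sigma_A(\Lambda)$ with $\kappa=i,j,k$ respectively. Since none of them is in $\cali\cap\calj\cap\calk$, but the box $\Lambda$ is tiled by their fibers and those of other elements, I'd again look at a generic line or plane and derive that, say, $a^{(i)}*F_i$ and $a^{(j)}*F_j$ force (via Lemma \ref{intersections}, plane consistency) that $\Sigma_A(\Lambda)\subset\Pi(\cdot,p_i^{n_i-1})$ or $\subset\Pi(\cdot,p_j^{n_j-1})$; combining the three pairwise conclusions with the disjoint-fiber partition of $\Lambda$ yields that the only consistent possibility is $|S|\le 2$. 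Concretely: Lemma \ref{intersections} gives, for the pair $(i,j)$, a $\nu_{ij}\in\{i,j\}$; for $(j,k)$ a $\nu_{jk}\in\{j,k\}$; for $(i,k)$ a $\nu_{ik}\in\{i,k\}$; these three choices cannot be ``cyclically'' distinct (that would force a point in the triple intersection, contradicting \eqref{emptyint-2}), so two of them coincide, giving the set $S$. For part (iii): with $S=\{i,j\}$ and $z_\nu = z_0+\nu M/p_k$, the set $z_\nu*F_i*F_j$ is a $2$-dimensional sub-grid of $\Lambda$ (a ``slab''); apply part (ii)'s reasoning, or directly Lemma \ref{intersections}, to this slab: since every $a\in\Sigma_A(z_\nu*F_i*F_j)$ has $\kappa(a)\in\{i,j\}$ already, and the slab must be tiled by $p_i$- and $p_j$-fibers only (no $p_k$-fibers can fit inside a single $p_k$-layer), the stratification-in-one-direction statement follows: by Lemma \ref{fiberbasic} and a disjointness/counting argument within the slab, all fibers covering $z_\nu*F_i*F_j$ point in a common direction $\lambda(\nu)$.

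The main obstacle I anticipate is part (i): correctly exploiting the hypothesis that $a$ lies in fibers in \emph{all three} directions simultaneously to pin down $\kappa(a')=i$ for \emph{every} $a'\in\Sigma_A(\Lambda)$, not just those on lines through $a$. The subtlety is that $\Sigma_A(\Lambda)$ can be large and spread across $\Lambda$, and a naive application of plane consistency (Lemma \ref{intersections}) only compares two elements at a time; propagating the constraint globally across the whole grid will require either an induction over the layers of $\Lambda$ or a clean global counting argument using that the translated fibers $b*F(a)$ exactly partition $\Lambda$ (Lemma \ref{fiberbasic}). I would expect to set it up as: fix the $p_i$-direction as ``special,'' show the partition of $\Lambda$ into fibers must be layer-by-layer in the $p_i$-planes, and deduce that no $p_j$- or $p_k$-fiber can appear. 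Parts (ii) and (iii) should then be comparatively routine given (i) and Lemma \ref{intersections}.
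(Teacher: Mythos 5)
Your overall framework is right: by Lemma \ref{fiberbasic} the translated fibers $b*F(a)$ meeting $\Lambda$ are pairwise disjoint and partition $\Lambda$, and this ``box tiled by columns'' picture is exactly what the paper uses. Part (iii) of your sketch is essentially correct (two fibers in different directions inside a single $(i,j)$-slab must intersect, so all fibers covering one slab share a direction). But part (i), which you yourself flag as the main obstacle, is genuinely missing. The idea you did not find is this: since $a\in\cali\cap\calj\cap\calk$ and $\kappa$ is constant on $A\cap\Lambda(a,D)$ with value $i$, the entire ``cross'' $A_0=(a*F_i*F_j)\cup(a*F_i*F_k)$ is contained in $A$, and its translate $b*A_0$ lies in $\Lambda$. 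Any fiber in the $p_j$ direction inside $\Lambda$ meets the $(i,k)$-plane $b*a*F_i*F_k$, and any fiber in the $p_k$ direction meets the $(i,j)$-plane $b*a*F_i*F_j$; so by Lemma \ref{fiberbasic} only $p_i$-fibers can coexist with $b*A_0$, and $\kappa\equiv i$ on $\Sigma_A(\Lambda)$. None of your proposed substitutes closes this gap: a counting argument cannot work (parts (ii)--(iii) show that mixed directions do occur in general, so there is no purely numerical obstruction), and Lemma \ref{intersections} controls where $\Sigma_A$ sits in planes $\Pi(\cdot,p_\nu^{n_\nu-1})$, not the values of the direction function $\kappa$. (Also, as a minor point, $\Lambda(z_0,D)$ has $M/D=p_ip_jp_k$ elements, i.e.\ it is a $p_i\times p_j\times p_k$ grid, not $\ZZ_{p_i^{n_i-1}}\times\ZZ_{p_j^{n_j-1}}\times\ZZ_{p_k^{n_k-1}}$.)

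Part (ii) also has a real gap. Your plan is to apply Lemma \ref{intersections} to the three coordinate pairs and then argue that the three resulting choices ``cannot be cyclically distinct, since that would force a point in the triple intersection.'' This step is unsubstantiated: there is no implication from a cyclic pattern of the plane-consistency choices to the existence of an element of $\Sigma_A(\Lambda)\cap\cali\cap\calj\cap\calk$, and again Lemma \ref{intersections} speaks about localization, not about $\kappa$. The paper instead argues directly with fibers: if $a_i,a_j,a_k\in\Sigma_A(\Lambda)$ have $\kappa(a_\lambda)=\lambda$ and tile points $x_\lambda\in\Lambda$, one picks the point $y\in\Lambda$ sharing its $p_k$-coordinate with $x_i$, its $p_i$-coordinate with $x_j$, and its $p_j$-coordinate with $x_k$; whichever direction the fiber through $y$ points, it intersects one of the three fibers $b_\lambda*F(a_\lambda)$, contradicting Lemma \ref{fiberbasic}. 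Moreover the paper must (and does) treat separately the case where some $a\in\Sigma_A(\Lambda)$ lies in \emph{two} of $\cali,\calj,\calk$ --- which \eqref{emptyint-2} does not exclude --- since then a whole $2$-plane $a*F_i*F_j$ lies in $A$ and the argument reduces to the cross-type reasoning of part (i); your sketch does not make this case distinction.
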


\begin{proof}
We will use throughout the proof that if $a\in \Sigma_A(\Lambda)$ and $a'\in\Lambda(a,D)$, then $F(a')\subset\Sigma_A(\Lambda)$. This is because if $a+b\in\Lambda$ for some $b\in B$, then we also have $b*F(a')\subset\Lambda$.

We first prove (i). Assume that $a\in\Sigma_A(\Lambda)\cap \cali\cap\calj\cap\calk$ such that $\kappa(a)=i$. Then 
$(a*F_j)\cup (a*F_k)\subset A$, and since $\kappa(a')=\kappa(a)$ for all $a'\in A\cap \Lambda(a,D)$, we have
$$
A_0:=(a*F_i*F_j)\cup (a*F_i*F_k)\subset A.
$$
Let $a''\in\Sigma_A(\Lambda)$. If $a''\in A\cap \Lambda(a,D)$, we already have $\kappa(a'')=i$. Assume now that $a''\in A\setminus \Lambda(a,D)$, and let $b,b''\in B$ satisfy $b*A_0\subset\Lambda$ and $b''*F(a'')\subset\Lambda$. By Lemma \ref{fiberbasic}, $b''*F(a'')$ must be disjoint from $b*A_0$. This is possible only if $\kappa(a'')=i$, proving (i).

We split the proof of (ii) and (iii) in two cases.

\medskip\noindent
{\bf Case 1.} Assume that (\ref{emptyint-2}) holds, and that some $a\in\Sigma_A(\Lambda)$ 
belongs to two of the sets $\cali,\calj,\calk$. Without loss of generality, we may assume that $a\in\cali\cap\calj$
and that $\kappa(a)=i$. Then 
$a*F_i*F_j\subset \Sigma_A(\Lambda)$ and $\kappa(a')=\kappa(a)=i$ for all $a'\in a*F_i*F_j$. We have $(a+b)*F_i*F_j=z_\nu*F_i*F_j$ for some $b\in B$ and $\nu\in\{0,1,\dots,p_k-1\}$, so that (\ref{tiledbyFi}) is proved with $\lambda(\nu)=i$ for that $\nu$.

Let $\mu\neq\nu$, and let $a_\mu\in A$ and $b_\mu\in B$ satisfy $a_\mu+b_\mu\in z_\mu*F_i*F_j$. By Lemma \ref{fiberbasic}, $b_\mu*F(a_\mu)$ must be disjoint from $(a+b)*F_i*F_j$, hence $\kappa(a_\mu)=\lambda$ for some $\lambda\in\{i,j\}$. Furthermore, if $a'_\mu\in A$ and $b'_\mu\in B$ satisfy $a'_\mu+b'_\mu\in z_\mu*F_i*F_j$, then 
$b'_\mu *F(a'_\mu)$ must be either identical to $b_\mu*F(a_\mu)$, or else it must be disjoint from both $b_\mu*F(a_\mu)$ and $(a+b)*F_i*F_j$. In both cases we have $\kappa(a'_\mu)=\kappa(a_\mu)=\lambda$. This proves (ii)--(iii) in Case 1.

\medskip\noindent
{\bf Case 2.} 
Assume now that for each $z\in\Lambda$, we have $z=a+b$, where $b\in B$ and $a$ belongs to exactly one of the sets $\cali,\calj,\calk$. 

We first prove that $\kappa(a)$ with $a\in\Sigma_A(\Lambda)$ may take at most two distinct values. We argue by contradiction.
Suppose that there exist $a_i,a_j,a_k\in A$ and $b_i,b_j,b_k\in B$ with $\kappa(a_\lambda)=\lambda$ and $x_\lambda:= a_\lambda+b_\lambda\in\Lambda$ for $\lambda=i,j,k$. Let $G_\lambda:=b_\lambda*F(a_\lambda)$. Let
$$ y\in\Pi(z_i,p_k^{n_k})\cap \Pi(z_j,p_i^{n_i})\cap \Pi(z_k,p_j^{n_j}),
$$
and suppose that $a\in A$ and $b\in B$ satisfy $a+b=y$. By Lemma \ref{fiberbasic}, the fiber $G:=b*F(a)$ must be disjoint from all of $G_i,G_j,G_k$. However, if $\kappa(a)=i$ then $G\cap G_k\neq\emptyset$; if $\kappa(a)=j$ then $G\cap G_i\neq\emptyset$; and if $\kappa(a)=k$ then $G\cap G_j\neq\emptyset$. There is no permitted value of $\kappa(a)$, contradicting our assumption. (We used a very similar argument in \cite[Proposition 7.10]{LaLo1}.)

Assume now that $\kappa(a)\in\{i,j\}$ with $a\in\Sigma_A(\Lambda)$. For each $\nu\in\{0,1,\dots,p_k-1\}$, let $\lambda(\nu)=\kappa(a_\nu)$, where $a_\nu$ satisfies $a_\nu+b_\nu=z_\nu$. Then for any $a'_\nu\in A$ and $b'_\nu\in B$ with $a'_\nu+b'_\nu\in z_\nu*F_i*F_j$, we must have $\kappa(a'_\nu)=\kappa(a_\nu)=\lambda_\nu$, since the other choice would contradict Lemma \ref{fiberbasic}.
\end{proof}

\begin{proposition}{\bf (Consistency for fibered grids)}\label{consistency3}
Assume that (F) holds, and that $0\in A\cap B$. Let $\Lambda:=\Lambda(0,D)$, and assume that $\kappa(0)=i$. Then:
\begin{itemize}
\item[(i)] There exists $l\in\{j,k\}$ such that
\begin{equation}\label{consistency-grid}
\Sigma_A(\Lambda)\subset \Pi(0,p_l^{n_l-1}).
\end{equation}
\item[(ii)] Assume further that $\Sigma_A(\Lambda)\subset\cali\cup\calj$, with both $\Sigma_A(\Lambda)\cap\cali\neq\emptyset$ and $\Sigma_A(\Lambda)\cap\calj\neq\emptyset$. Then (\ref{consistency-grid}) holds with $l=k$.
\end{itemize}
\end{proposition}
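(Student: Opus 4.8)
\textbf{Proof strategy for Proposition \ref{consistency3}.}

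The plan is to leverage Lemma \ref{no-upgrades} (splitting for fibers) together with the stratification structure from Lemma \ref{2dir}, combined with the observation that $F(0)=0*F_i\subset A$ since $\kappa(0)=i$. First I would handle part (i). Writing $\Lambda=\Lambda(0,D)$, I note that $\Lambda$ decomposes as a union of the ``planks'' $z_\nu*F_i*F_j$ over $\nu\in\{0,\dots,p_k-1\}$ (and symmetrically with $j,k$ swapped). By Lemma \ref{2dir}, after possibly permuting indices there is a set $S\subset\{i,j,k\}$ with $|S|\le 2$ such that $\kappa(a)\in S$ for all $a\in\Sigma_A(\Lambda)$; since $0\in\Sigma_A(\Lambda)$ and $\kappa(0)=i$, we have $i\in S$, so $S=\{i\}$, $S=\{i,j\}$, or $S=\{i,k\}$ (the case $\Sigma_A(\Lambda)\cap\cali\cap\calj\cap\calk\neq\emptyset$ containing such an element with $\kappa=i$ forces $S=\{i\}$ by Lemma \ref{2dir}(i)). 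In each case I want to show $\Sigma_A(\Lambda)$ lies in a single plane $\Pi(0,p_l^{n_l-1})$ for the ``other'' direction $l\in\{j,k\}$.

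The key mechanism is the cross-direction argument of Corollary \ref{intersections-plus}: since $F_i=0*F_i\subset A$, for any fiber $a*F_j$ with $a+b\in\Lambda$ sitting inside a $D$-grid, I can compare it against the translate $b*F_i$ and apply divisor exclusion. More directly, for each $a\in\Sigma_A(\Lambda)$, pick $b\in B$ with $a+b\in\Lambda$; then the fiber $b*F(a)$ is contained in $\Lambda$, and by Lemma \ref{fiberbasic} all such fibers (as $a$ ranges over $\Sigma_A(\Lambda)$, grouped by the grid they lie on) are pairwise disjoint or identical. Now I restrict attention to the grid $\Lambda(0,D)$ itself and look at how $A$ meets each two-dimensional plank: in the stratified picture of Lemma \ref{2dir}(iii), each plank $z_\nu*F_i*F_j$ (when $S=\{i,j\}$) is tiled by fibers in a single direction $\lambda(\nu)\in\{i,j\}$. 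I then apply Lemma \ref{intersections} (plane consistency) to the sub-grid $\Lambda(0,M/p_ip_l)$ for $l\in\{j,k\}\setminus\{$the plank direction$\}$: this forces $\Sigma_A$ of that sub-grid into a single plane perpendicular to $p_i$ or $p_l$; since the $p_i$-coordinate already varies freely (because $F_i\subset A$ and $0\in\Sigma_A$), divisor exclusion rules out the $p_i$-plane and pins $\Sigma_A(\Lambda)$ into $\Pi(0,p_l^{n_l-1})$. Carrying this out carefully across the three cases for $S$ gives (i) with the appropriate $l$.

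For part (ii), the hypothesis is that both $\cali$ and $\calj$ genuinely appear in $\Sigma_A(\Lambda)$, so $S=\{i,j\}$ in the language above, and I must exclude $l=j$, i.e. show the localizing plane is perpendicular to the third direction $p_k$. Pick $a\in\Sigma_A(\Lambda)\cap\cali$ and $a'\in\Sigma_A(\Lambda)\cap\calj$ with $\kappa(a)=i$, $\kappa(a')=j$; choose $b,b'\in B$ with $a+b,a'+b'\in\Lambda$. If instead $\Sigma_A(\Lambda)\subset\Pi(0,p_j^{n_j-1})$ held, then in particular $p_j^{n_j-1}\mid a-a'$, but $a$ lies on a full $p_i$-fiber and $a'$ on a full $p_j$-fiber, both inside $\Lambda$; translating $a'$ along its $p_j$-fiber changes its $p_j$-coordinate through all residues mod $p_j^{n_j}$, so I can choose a representative $\tilde a'\in a'*F_j\subset A$ with $(\tilde a'-a,M/p_ip_j)$ having full multiplicity in the $p_j$-component — then comparing the fibers $b*(0*F_i)$-translate through $a$ against $\tilde a'$ and invoking divisor exclusion as in the proof of Lemma \ref{intersections} gives a contradiction. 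Hence $l\ne j$, so $l=k$.

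\textbf{Main obstacle.} The delicate point is bookkeeping the interaction between the direction-assignment function $\kappa$ and the grids: $\kappa$ is constant on $D$-grids, but $\Sigma_A(\Lambda)$ may spill over many $D$-grids, and I must make sure that when I invoke Lemma \ref{2dir}(iii) and the disjointness from Lemma \ref{fiberbasic} I am comparing fibers that genuinely live in $\Lambda$. The cleanest route is probably to first reduce, using Lemma \ref{2dir}(i), to the case where no element of $\Sigma_A(\Lambda)$ sits in all three of $\cali,\calj,\calk$ (otherwise $S=\{i\}$ and the conclusion is immediate from Corollary \ref{intersections-plus}), and then in the genuinely two-directional case extract, for each of the $p_k$ planks, its fiber direction $\lambda(\nu)$ and run a uniform divisor-exclusion argument against the fixed fiber $F_i\subset A$. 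Getting the quantifiers on $\nu$ and on the auxiliary $b\in B$ right — so that the excluded divisor really is $m/p$ versus $m$ for a common $m$ — is where the care is needed; the underlying inequalities are all immediate consequences of Sands' divisor exclusion once the setup is correct.
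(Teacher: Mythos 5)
There are two genuine gaps. For part (i), your argument only ever controls $\Sigma_A$ on the two coordinate planes through the origin: applying Corollary \ref{intersections-plus} to $\Lambda_{ij}=F_i*F_j$ and $\Lambda_{ik}=F_i*F_k$ gives $\Sigma_A(\Lambda_{ij})\subset \Pi(0,p_j^{n_j-1})$ and $\Sigma_A(\Lambda_{ik})\subset \Pi(0,p_k^{n_k-1})$ \emph{simultaneously}, with a different hyperplane for each plane, and this says nothing yet about the rest of the $3$-dimensional grid $\Lambda$. The entire content of (i) is the dichotomy that, in addition, either $\Sigma_A(\Lambda_{ij})\subset \Pi(0,p_k^{n_k-1})$ or $\Sigma_A(\Lambda_{ik})\subset \Pi(0,p_j^{n_j-1})$; only then can one sweep out all of $\Lambda$ by fibers in the remaining direction and apply Lemma \ref{no-upgrades} to each to get a single $l$ valid for all of $\Sigma_A(\Lambda)$. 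Your sketch jumps from ``$\Sigma_A$ of that sub-grid'' to ``$\Sigma_A(\Lambda)$'' without this step. The paper proves the dichotomy by contradiction: assuming witnesses $z_j\in\Lambda_{ij}$, $z_k\in\Lambda_{ik}$ violating both inclusions, it constructs a fourth point $z_{jk}$ and a base point $a_0\in F_i\subset A$ with $z_{jk}\in a_0*F_j*F_k$, and plays Lemma \ref{no-upgrades} against Lemma \ref{intersections} applied at $a_0$. Nothing in your outline performs this reconciliation, and your ``main obstacle'' paragraph locates the difficulty elsewhere (bookkeeping of $\kappa$), so the gap is not merely one of exposition.

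For part (ii), the proposed contradiction is based on a false claim. You assert that translating $a'$ along its $p_j$-fiber ``changes its $p_j$-coordinate through all residues mod $p_j^{n_j}$'' and hence produces a representative outside $\Pi(0,p_j^{n_j-1})$. But $M/p_j$ is divisible by $p_j^{n_j-1}$, so the whole fiber $a'*F_j$ is contained in the single plane $\Pi(a',p_j^{n_j-1})$: the fiber varies only the top $p_j$-digit, while membership in $\Pi(0,p_j^{n_j-1})$ constrains only the lower digits. Thus having $\Sigma_A(\Lambda)\subset\Pi(0,p_j^{n_j-1})$ together with a full $p_j$-fiber in $\Sigma_A(\Lambda)$ is perfectly consistent, and no contradiction arises. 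The paper's mechanism is different: for $a_i,a_j\in\Sigma_A(\Lambda)$ with $\kappa(a_i)=i$, $\kappa(a_j)=j$, it selects points $z_i'\in b_i*F(a_i)$ and $z_j'\in b_j*F(a_j)$ with $(z_i'-z_j',M)=M/p_k$ and applies Lemma \ref{no-upgrades} to the fiber $z_i'*F_k$ to conclude $p_k^{n_k-1}\mid a_i-a_j$ directly; since both $A_i$ and $A_j$ are nonempty and $0\in A_i$, this pins everything into $\Pi(0,p_k^{n_k-1})$. You would need to replace your argument for (ii) with something of this kind.
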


\begin{proof}
By Lemma \ref{2dir}, we may assume without loss of generality that $\kappa(a)\in\{i,j\}$ for all $a\in \Sigma_A(\Lambda)$. 
Let $\Lambda_{ij}=F_i*F_j$ and $\Lambda_{ik}=F_i*F_k$. 
For all $a'\in\Sigma_A(\Lambda_{ij})$, we must have $\kappa(a')=i$, since $\kappa(a')=j$ would contradict Lemma \ref{fiberbasic}. 

%In particular,
%\begin{equation}\label{onlyij}
%\Sigma_A(\Lambda_{ij})\subset \cali.
%\end{equation}

\medskip\noindent
{\bf Proof of (i).}
By Corollary \ref{intersections-plus}, we have
$$
%\begin{equation}\label{consistency-grid2a}
\Sigma_A(\Lambda_{ij})\subset \Pi(0,p_j^{n_j-1}),\ \  \Sigma_A(\Lambda_{ik})\subset \Pi(0,p_k^{n_k-1}).
%\end{equation}
$$
It suffices to prove that, additionally, we have
\begin{equation}\label{consistency-grid2}
\text{ either } \Sigma_A(\Lambda_{ij})\subset \Pi(0,p_k^{n_k-1}) \text{ or } \Sigma_A(\Lambda_{ik})\subset \Pi(0,p_j^{n_j-1}).
\end{equation}
Indeed, suppose that the first inclusion in (\ref{consistency-grid2}) holds. Writing $\Lambda=\bigcup_{z\in \Lambda_{ij}} z*F_k$ and applying Lemma \ref{no-upgrades} to each $z*F_k$, we get that (\ref{consistency-grid}) holds with $l=k$. Assuming the second inclusion in (\ref{consistency-grid2}), we get the same with $l=j$.

It remains to prove (\ref{consistency-grid2}) 
Assume, by contradiction, that
$$
\Sigma_A(\Lambda_{ij})\not\subset \Pi(0,p_k^{n_k-1}) \text{ and } \Sigma_A(\Lambda_{ik})\not\subset \Pi(0,p_j^{n_j-1}).
$$
Then there must exist $z_j\in\Lambda_{ij}, z_k\in \Lambda_{ik}$, and $a_j,a_k\in A, b_j,b_k\in B$ satisfying
$$
a_\nu+b_\nu=z_\nu,\,\nu\in \{j,k\}.
$$
such that 
\begin{equation}\label{div_condition2}
p_j^{n_j-1}\nmid a_k, b_k,\ \ p_k^{n_k-1}\nmid a_j, b_j.
\end{equation}
Recall that $\kappa(a_j)=i$. Replacing $a_j$ by a different point of $a_j*F_i$ if necessary, we may assume that $z_j\in\Pi(z_k,p_i^{n_i})$. Let $z_{jk}\in\Lambda$ and $a_0\in F_i$ satisfy
$$
(z_{jk}-z_j,M)=(a_0-z_k,M)=M/p_k, \ \ (z_{jk}-z_k,M)=(a_0-z_j,M)= M/p_j.
$$
Let $a_{jk}\in A, b_{jk}\in B$ satisfy $z_{jk}=a_{jk}+b_{jk}$. By Lemma \ref{no-upgrades} and (\ref{div_condition2}),
$$
p_j^{n_j-1}\nmid a_k-a_{jk},\,p_k^{n_k-1}\nmid a_j-a_{jk}.
$$
On the other hand, since $z_{jk}\in a_0*F_j*F_k$, it follows from Lemma \ref{intersections} with $z=a_0$ that $p_\nu^{n_\nu-1}$ must divide $a_{jk}-a_0$ for some $\nu\in\{j,k\}$. This contradiction proves (i).

\medskip\noindent
{\bf Proof of (ii).}
Let $A_\nu=\{A\in\Sigma_A(\Lambda):\ \kappa(a)=\nu\}$ for $\nu=i,j$. 
Let $z_i,z_j\in\Lambda$ satisfy $z_i=a_i+b_i$, $z_j=a_j+b_j$ with $a_i\in A_i,a_j\in A_j, b_1,b_2\in B$. We claim that
\begin{equation}\label{div_condition}
p_k^{n_k-1}| a_i-a_j.
\end{equation}
Indeed, let $z'_i=a'_i+b_i\in b_i*F(a_i)$ and $z'_j= a'_j+b_j \in b_j*F(a_j)$ be the points such that $(z'_i-z'_j,M)=M/p_k$. 
By Lemma \ref{no-upgrades} applied to $z'_i*F_k$, we have $p_k^{n_k-1}| a'_i-a'_j$. But $a'_i\in F(a_i)$ and $a'_j\in F(a_j)$, hence (\ref{div_condition}) holds.

The proof of (ii) is completed as follows. 
Suppose that $A$ satisfies the assumptions of (ii). Then (\ref{div_condition}) implies the following: given $a_i\in A_i$, any $a_j\in A_j$ must satisfy $p_k^{n_k-1}|a_i-a_j$, and the same holds with the $i$ and $j$ indices interchanged. Since $A_1$ and $A_2$ are both nonempty, this clearly implies (\ref{consistency-grid}) with $l=k$.
\end{proof}

We briefly discuss splitting parity and the slab reduction for fibered grids. If $A\oplus B=\ZZ_M$, where $M$ has at most 3 prime factors, our expectation is that the condition of the slab reductions are satisfied for at least one of $A$ and $B$ in some direction \cite[Conjecture 9.1]{LaLo1}. By Lemma \ref{splittingslab}, this means uniform splitting parity in that direction for all tilings $A\oplus rB=\ZZ_M$ with $r\in R$. An intermediate result in this direction might establish uniform splitting parity on certain subgrids of $\ZZ_M$. For example, the following is easy to prove.

\begin{lemma}\label{consistent-splitting} 
Assume that (F) holds.
Let $\Lambda=\Lambda(z_0,D)$ for some $z_0\in\ZZ_M$. Assume that $\Sigma_A(\Lambda)$ has the structure described in Lemma \ref{2dir} (i), and that for each $\lambda\in\{i,j\}$,
\begin{equation}\label{2and2}
\#\{\nu\in\{0,1,\dots,p_k-1\}: \ \lambda(\nu)=\lambda\}\geq 2.
\end{equation}
Then the tiling has uniform splitting parity in the $p_k$ direction on $\Lambda$. (That is, all fibers $z*F_k$ with $z\in\Lambda$ split with the same parity, either $(A,B)$ for all $z$ or $(B,A)$ for all $z$).
\end{lemma}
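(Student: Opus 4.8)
\textbf{Proof proposal for Lemma \ref{consistent-splitting}.}

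The plan is to translate the hypotheses into statements about how elements of $A$ and $B$ tile individual $p_k$-fibers inside $\Lambda$, and then use the ``$2$ and $2$'' condition (\ref{2and2}) to rule out a parity switch between fibers. First I would set up coordinates: write $z_\nu=z_0+\nu M/p_k$ for $\nu=0,1,\dots,p_k-1$, and for each $\nu$ let $\Lambda_\nu:= z_\nu * F_i * F_j$ be the ``slab'' of $\Lambda$ at level $\nu$ in the $p_k$ direction, so $\Lambda=\bigcup_\nu \Lambda_\nu$. By the structure in Lemma \ref{2dir}(i) (with the roles as in (iii)), each $\Lambda_\nu$ is tiled by $A$-fibers in a single direction $\lambda(\nu)\in\{i,j\}$; more precisely, every $a\in\Sigma_A(\Lambda)$ meeting $\Lambda_\nu$ has $\kappa(a)=\lambda(\nu)$, and $F(a)=a*F_{\lambda(\nu)}\subset\Lambda_\nu$. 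The key observation is that if $a*F_{\lambda(\nu)}\subset\Lambda_\nu$ with $\lambda(\nu)\in\{i,j\}$, then this fiber lies entirely inside a single $p_k$-plane, i.e. $a*F_{\lambda(\nu)}\subset\Pi(a,p_k^{n_k})$; in particular $p_k^{n_k}\mid a-a'$ for any two $a,a'\in A$ lying in the same slab and tiling the same point of $\Lambda$ — the $A$-part contributes nothing in the $p_k$ coordinate within a slab.

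Next I would analyze a single fiber $Z=z*F_k$ with $z\in\Lambda$. Its $p_k$-th roots $z+\mu M/p_k$ run through one point of each slab $\Lambda_\nu$ (after reindexing $\mu\leftrightarrow\nu$). By Lemma \ref{no-upgrades}, $Z$ splits with parity either $(A,B)$ or $(B,A)$. Suppose $Z$ splits with parity $(B,A)$: then the $p_k$ coordinates of the $\Sigma_B(Z)$ elements are constant ($p_k^{n_k}\mid b-b'$) while those of $\Sigma_A(Z)$ realize all residues mod $p_k$. But if $a_\nu\in A$ tiles the point of $Z$ in slab $\Lambda_\nu$, then $a_\nu$ lies in $\Lambda_\nu$ and, by the previous paragraph, its $p_k$ coordinate is determined by $\nu$ only (it equals the $p_k$ coordinate of $z_\nu$). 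Since the $z_\nu$ are exactly the $p_k$ distinct cosets, the $a_\nu$ automatically realize all residues mod $p_k$ — so the $(B,A)$ parity is always geometrically consistent with the slab structure, for \emph{every} fiber $Z$, regardless of direction. The real content, then, is to show that $(A,B)$ parity is \emph{impossible} somewhere once (\ref{2and2}) holds, or more precisely that the two parities cannot coexist among the fibers of $\Lambda$.

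So assume toward a contradiction that $Z=z*F_k$ splits with parity $(A,B)$ for some $z\in\Lambda$, with $z=a+b$, $a\in A$, $b\in B$. Then $\Sigma_A(Z)\subset\Pi(a,p_k^{n_k})$ and the $p_k$ coordinates of $\Sigma_B(Z)$ hit all residues mod $p_k$; in particular there are $p_k$ distinct elements $b_\nu\in\Sigma_B(Z)$, one in each $p_k$-plane, with $b_\nu*F(a_\nu)\subset\Lambda_\nu$ for the corresponding $a_\nu=a\in\Sigma_A(Z)$ (all equal to $a$ up to the $F_k$-direction — here I use $\Sigma_A(Z)\subset\Pi(a,p_k^{n_k})$ together with the fact that any $a'\in\Sigma_A(Z)$ lies in some $\Lambda_\nu$, forcing $a'$ to agree with $a$ in the non-$k$ coordinates as well, hence $a'=a$). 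Now invoke (\ref{2and2}): at least two slabs, say $\Lambda_{\nu_1}$ and $\Lambda_{\nu_2}$, have $\lambda(\nu_1)=\lambda(\nu_2)=i$ (WLOG), and at least two have direction $j$. The element $a$ meets $\Lambda_{\nu_1}$ via the fiber $b_{\nu_1}*F(a)$, which has direction $\kappa(a)=\lambda(\nu_1)=i$; but $a$ also meets $\Lambda_{\nu_2}$ via $b_{\nu_2}*F(a)$, forcing $\kappa(a)=\lambda(\nu_2)$ as well — consistent so far since both are $i$. The contradiction must instead be extracted by pairing the $(A,B)$ fiber $Z$ against a slab of the \emph{other} direction: pick $\nu_3$ with $\lambda(\nu_3)=j\neq i=\kappa(a)$. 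The fiber $b_{\nu_3}*F(a)$ would have to lie in $\Lambda_{\nu_3}$ and be tiled inside the $j$-stratified slab; applying Lemma \ref{fiberbasic} (as in the proof of Lemma \ref{2dir}) to $b_{\nu_3}*F(a)$ against the $j$-direction fibers tiling $\Lambda_{\nu_3}$ forces $\kappa(a)=j$, contradicting $\kappa(a)=i$.

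\textbf{Main obstacle.} The delicate point — and where I would spend the most care — is the reduction step in the third paragraph showing that \emph{all} of $\Sigma_A(Z)$ collapses to a single point $a$ (modulo $F_k$) when $Z$ splits with parity $(A,B)$: this uses both $\Sigma_A(Z)\subset\Pi(a,p_k^{n_k})$ from Lemma \ref{no-upgrades} and the slab constraint that any element of $A$ in $\Lambda_\nu$ lying on an $i$- or $j$-fiber has its $p_k$-coordinate pinned. Combined with the need to know which slab each $b_\nu$ sends its companion $a$-fiber into, one must track four coordinates (the $p_i,p_j,p_k$ coordinates plus the slab index) carefully; a clean way to organize this is to fix the translation so $z_0=0\in A\cap B$, $\kappa(0)=i$, and run the argument entirely in terms of the fibers $b*F(a)$ and Lemma \ref{fiberbasic}, exactly mirroring Case 1/Case 2 of the proof of Lemma \ref{2dir}. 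Once the single-point collapse is established, the use of (\ref{2and2}) to produce a slab of the ``wrong'' direction and the final application of Lemma \ref{fiberbasic} is routine.
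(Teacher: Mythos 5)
There is a genuine gap, and it starts with the target. The lemma asserts only that all fibers $z*F_k$, $z\in\Lambda$, split with the \emph{same} parity; it explicitly allows this common parity to be $(A,B)$. Your plan is to show that $(A,B)$ parity is impossible for every fiber, a strictly stronger claim that the hypotheses do not support and that the paper does not make. More importantly, the step on which your contradiction rests is false: from $a_\nu+b_\nu\in\Lambda_\nu$ you conclude that $a_\nu$ itself lies in $\Lambda_\nu$, hence that its $p_k$-coordinate is pinned to that of $z_\nu$. Membership of the \emph{sum} in the slab says nothing about $a_\nu$; what lies inside $\Lambda_\nu$ is the translated fiber $b_\nu*F(a_\nu)$, not $a_\nu$ or $F(a_\nu)$. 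Consequently the ``single-point collapse'' of $\Sigma_A(Z)$ for an $(A,B)$-split fiber does not follow: $\Sigma_A(Z)\subset\Pi(a,p_k^{n_k})$ constrains only the $p_k$-coordinates, the elements $a_\mu$ may still differ in the $p_i,p_j$ coordinates, so no single $a$ is forced to serve two slabs of different stratification direction and no conflict with $\kappa$ arises. Note also that you use (\ref{2and2}) only to produce one slab of the ``other'' direction, which is already guaranteed by the stratified structure itself; the hypothesis demands \emph{two} slabs in each direction, and that is the clue to how it must actually be used.

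The paper's proof is a transport-of-parity argument that never tries to decide which parity occurs. Fix $z\in\Lambda$ and take two slabs $\Lambda_\nu,\Lambda_\mu$ with $\lambda(\nu)=\lambda(\mu)=i$ (this is where (\ref{2and2}) enters). The two points of $z*F_k$ lying in these slabs are tiled by $a_\nu+b_\nu$ and $a_\mu+b_\mu$ with $a_\nu*F_i,\,a_\mu*F_i\subset A$, and by Lemma \ref{no-upgrades} the parity of $z*F_k$ is determined by whether $p_k^{n_k}\mid b_\nu-b_\mu$ or $p_k^{n_k}\mid a_\nu-a_\mu$. Moving to $z'=z+\rho M/p_i$ replaces $a_\nu,a_\mu$ by $a_\nu+\rho M/p_i,\,a_\mu+\rho M/p_i$ and keeps $b_\nu,b_\mu$, so these divisibilities, hence the parity, are unchanged along $z*F_i$. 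The same argument with the two $j$-slabs shows the parity is constant along $F_j$, and composing the two gives uniformity on all of $\Lambda=\bigcup_{z''\in z*F_i*F_j}z''*F_k$. This is the missing idea you would need to make the argument work.
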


\begin{proof}
We first claim the following. Let $z\in\Lambda$. Then the fibers $z'*F_k$ have the same splitting parity in the $p_k$ direction for all $z'\in z*F_i$.

To see this, assume that $\nu$ and $\mu$ are two distinct numbers in $\{0,1,\dots,p_k-1\}$ with $\lambda(\nu)=\lambda(\mu)=i$, and let $z\in \Lambda$. Let $y_\mu,y_\nu$ satisfy 
$$
(y_\mu-z,M)=(y_\nu-z,M)=M/p_k,\ y_\mu\in z_\mu*F_i*F_j,\ y_\nu\in z_\nu*F_i*F_j,
$$
with $z_\mu,z_\nu$ defined as in Lemma \ref{2dir}.  
Then $z_\nu=a_\nu+b_\nu$ and $z_\mu=a_\mu+b_\mu$, with $a_\nu,a_\mu\in A$, $b_\nu,b_\mu\in B$, and $\kappa(a_\nu)=\kappa(a_\mu)=i$.

Let $z'\in z*F_i$, so that $z'=z+\rho M/p_i$ for some $\rho \in\{0,1,\dots,p_i-1\}$. Consider the points $z'_\nu,z'_\mu\in x*F_k$ given by
$z'_\nu=z_\nu+\rho M/p_i$ and $z'_\mu=z_\mu+\rho M/p_i$. We have
$$
z'_\nu=(a_\nu+\rho M/p_i)+b_\nu,\ \  z'_\mu=(a_\mu+\rho M/p_i)+b_\mu.
$$
Thus $z*F_k$ splits with parity $(B,A)$ if and only if $p_k^{n_k}|b_\nu-b_\mu$, if and only if $z'*F_k$ splits with parity $(B,A)$. The case of parity $(A,B)$ is similar.

We now complete the proof of the lemma.
Let $z\in\Lambda$. By the claim above,  $z'*F_k$ splits with the same parity as $z*F_k$ for all $z'\in z*F_i$. By the same claim again, applied with $i$ and $j$ interchanged, $z''*F_k$ splits with the same parity as $z'*F_k$ for all $z''\in z'*F_j$. But this means that all fibers $z''*F_k$ with $z''\in z*F_i*F_j$ split with the same parity, as claimed.
\end{proof}

It is likely that, at least for $n_i=n_j=2$, Lemma \ref{consistent-splitting} can be proved using our current methods without the assumption that (\ref{2and2}) holds. However, since this extension is not needed in the proofs of our main results, we do not attempt it here.

%%%%%%%%%%%%%%%%%%%%%%%%%

%%%%%%%%%%%%%%%%%%%%%%%%%%%%%%%%%%%%%%%%%%%%%%

\section{Acknowledgements}

The first author is supported by NSERC Discovery Grant 22R80520. The second author is supported by ISF Grant 607/21.

%%%%%%%%%%%%%%%%%%%%%%%%%%%%%%%%%%%%%%%%%%%%%%

%%%%%%%%%%%%%%%%%%%%%%%%%%%%%%%%%%%%%%%%%%%%%

\bibliographystyle{amsplain}

\bigskip

\noindent{{\sc {\L}aba:} Department of Mathematics, University of British Columbia, Vancouver,
B.C. V6T 1Z2, Canada}, 
{\it ilaba@math.ubc.ca}

\noindent{ORCID ID: 0000-0002-3810-6121}

\medskip

\noindent{{\sc Londner:}  Department of Mathematics, Faculty of Mathematics and Computer Science, Weizmann Institute of Science, Rehovot 7610001, Israel},
{\it itay.londner@weizmann.ac.il}

\noindent{ORCID ID: 0000-0003-3337-9427}

\end{document}